\newcolumntype{.}{D{.}{.}{1.3}}
\newcommand\dashrule{\leavevmode\xleaders\hbox{-}\hfill\kern0pt}
\def\argmin{\operatornamewithlimits{arg\,min}}
\def\diag{\operatorname{diag}}
\newcommand{\bn}{\bvec{n}}
\newcommand{\bx}{\bvec{x}}
\newcommand{\by}{\bvec{y}}
\newcommand{\bA}{{\bf A}}
\newcommand{\bB}{{\bf B}}
\newcommand{\bI}{{\bf I}}
\newcommand{\bL}{{\bf L}}
\newcommand{\bQ}{{\bf Q}}
\newcommand{\bR}{{\bf R}}
\newcommand{\bU}{{\bf U}}
\newcommand{\bV}{{\bf V}}
\newcommand{\bX}{{\bf X}}
\newcommand{\bY}{{\bf Y}}
\newcommand{\calO}{{\mathcal{O}}}
\newcommand{\blambda}{\mbox{\boldmath $\lambda$}}
\newcommand{\bsigma}{\mbox{\boldmath $\sigma$}}
\newcommand{\bSigma}{\mbox{\boldmath $\Sigma$}}
\newcommand{\bPhi}{\mbox{\boldmath $\Phi$}}
\newcommand{\bPsi}{\mbox{\boldmath $\Psi$}}
\newcommand{\be}{\begin{eqnarray}}
\newcommand{\ee}{\end{eqnarray}}
\newcommand{\matrixb}{\left[ \begin{array}}
\newcommand{\matrixe}{\end{array} \right]}
\newcommand{\sign}{\mathop{\rm sign}\nolimits}
\newcommand{\tr}{\mathop{\rm tr}\nolimits}
\def\*{\circledast}
\newtheorem{definition}{Definition}
\newtheorem{remark}{Remark}
\newtheorem{lemma}{Lemma}
\newcommand{\bvec}[1]{\boldsymbol{#1}}
\newcommand{\ve}{\bvec{e}}
\newcommand{\vm}{\bvec{m}}
\def\vectorize{\operatorname{vec}}
\newcommand{\vtr}[1]{\vectorize\hspace{-.3ex}\left(#1\right)}
\def\reshapename{\operatorname{reshape}}
\newcommand{\reshape}[2]{\reshapename\hspace{-.3ex}\left(#1,#2\right)}
\newcommand{\tensor}[1]{\boldsymbol{\mathscr{\MakeUppercase{#1}}}} %tensor
\newcommand{\tA}{\tensor{A}}
\newcommand{\tB}{\tensor{B}}
\newcommand{\tC}{\tensor{C}}
\newcommand{\tE}{\tensor{E}}
\newcommand{\tL}{\tensor{L}}
\newcommand{\tR}{\tensor{R}}
\newcommand{\tT}{\tensor{T}}
\newcommand{\tX}{\tensor{X}}
\newcommand{\tY}{\tensor{Y}}
\newcommand{\tZ}{\tensor{Z}}
\newcommand{\minitab}[2][l]{\begin{tabular}{@{}#1}#2\end{tabular}}
\newcommand{\ltimesx}{\mathlarger{\ltimes}}%%
\newcommand{\rtimesx}{\mathlarger{\rtimes}}%%
\def\comment#1{}
\title{Tensor Networks for Latent Variable Analysis. Part I: Algorithms for Tensor Train Decomposition}
\author{\vspace{-.3ex}Anh-Huy Phan$^{*}$, Andrzej Cichocki, Andr{\'e} Uschmajew, Petr Tichavsk{\'y}, George Luta and Danilo Mandic
\thanks{A.-H. Phan and A. Cichocki are with the Lab for Advanced Brain Signal Processing, Brain Science Institute, RIKEN, Wakoshi, Japan, e-mail: (phan,cia)@brain.riken.jp.}% <-this % stops a space
\thanks{A. Cichocki is also with Systems Research Institute PAS, Warsaw, Poland.}
\thanks{A. Uschmajew is with Hausdorff Center for Mathematics \& Institute for Numerical Simulation, University of Bonn, Germany, email: uschmajew@ins.uni-bonn.de.}
\thanks{P.  Tichavsk{\'y} is with Institute of Information Theory and Automation, Prague, Czech Republic, email: tichavsk@utia.cas.cz.}% <-this % stops a space
%\thanks{The work of P. Tichavsk{\'y} was supported by the Czech Science Foundation through project No. 14--13713S.}
\thanks{G. Luta is with Lombardi Comprehensive Cancer Center, Georgetown University, Washington D.C, USA, e-mail: george.luta@georgetown.edu}
\thanks{D. Mandic is with Imperial College, London, United Kingdom, email: d.mandic@imperial.ac.uk.}
}
\renewcommand{\CommentSty}[1]{\fontsize{8}{9}\selectfont\textnormal{\texttt{#1}}\unskip}%%
\newcounter{example} 
\newenvironment{example}%                  environment name
{\refstepcounter{example}\vspace{10pt}\par\noindent 
\textbf{Example \theexample. }
}% begin code
{}%    
\newsavebox{\@brx}
\newcommand{\llangle}[1][]{\savebox{\@brx}{\(\m@th{#1\langle}\)}%
  \mathopen{\copy\@brx\kern-0.5\wd\@brx\usebox{\@brx}}}
\newcommand{\rrangle}[1][]{\savebox{\@brx}{\(\m@th{#1\rangle}\)}%
  \mathclose{\copy\@brx\kern-0.5\wd\@brx\usebox{\@brx}}}
\begin{document}

\IEEEtitleabstractindextext{%
\begin{abstract}
Decompositions of tensors into factor matrices, which interact through a core tensor, have found numerous applications in signal processing and machine learning. A more general tensor model which represents data  as an ordered network of sub-tensors of order-2 or order-3 has, so far, not been widely considered in these fields, although this so-called tensor network decomposition has been long studied in quantum physics and scientific computing.
In this study, we present novel algorithms and applications of tensor network decompositions, with a particular focus on the tensor train decomposition and its variants. The novel algorithms developed for the tensor train decomposition update, in an alternating way, one or several core tensors at each iteration, and exhibit enhanced mathematical tractability and scalability to exceedingly large-scale data tensors. The proposed algorithms are tested in classic paradigms of blind source separation from a single mixture, denoising, and feature extraction, and achieve superior performance over the widely used truncated algorithms for tensor train decomposition. 
\end{abstract}

% Note that keywords are not normally used for peerreview papers.
\begin{IEEEkeywords}
Tensor network, tensor train decomposition, Tucker-2 decomposition, truncated SVD, blind source separation from single mixture, image denoising, tensorization
\end{IEEEkeywords}}

% make the title area
\maketitle

%\vspace{-.7ex}
\IEEEraisesectionheading{\section{Introduction}\label{sec:introduction}}

%$\ltimesxx$
%
%$\leftouterjoin$

\IEEEPARstart{T}{ensor} decompositions (TDs) are rapidly finding application in signal processing paradigms, including the identification of independent components in multivariate data through the decomposition of higher order cumulant tensors,
signals retrieval in CDMA telecommunications, extraction of hidden components from neural data, training of dictionaries in supervised learning systems, image completion and various tracking scenarios. Most current applications are based on the CANDECOMP/PARAFAC (CPD)\cite{Harshman,Carroll_Chang} and the Tucker decomposition \cite{Tucker66,Lathauwer_HOOI}, while their variants, such as the PARALIND, PARATUCK \cite{CEM:CEM1206,Favier-deAlmeida14} or the Block term decomposition \cite{Lath-BCM}, the tensor deflation or tensor rank splitting \cite{Phan_tensordeflation_alg,Phan_ranksplitting_full} were developed with a specific task in mind; for a review see \cite{Kolda08,NMF-book,Cich-Lathp} and references therein.

Within tensor decompositions the data tensor is factorized into a set of factor matrices and a core tensor or a diagonal tensor, the entries of which model interaction between factor matrices. Such tensor decompositions are natural extensions of matrix factorizations, which  allows for most two-way factor analysis methods to be generalised to their multiway analysis counterparts. However, despite of mathematical elegance, such tensor decompositions easily become computationally intractable, or ill conditioned representations, particularly in CPD.

To help resolve these issues, which are a critical obstacle in a more widespread use of tensor decompositions in practical applications, we here consider another kind of tensor approximation, whereby multiple small core tensors are interconnected and construct an ordered network of such core tensors. 
More specifically, we focus on the Tensor Train (TT) decomposition, in which core tensors connect to only one or two other cores (see illustration in Fig.~\ref{fig_TTtensor}), so that, the tensor network (TN) acts as a ``train'' of tensors \cite{OseledetsTT09}. The TT decomposition has been brought into the tensor decomposition community through the work of Oseledets and Tyrtyshnikov \cite{OseledetsTT09}, although the model itself was developed earlier in quantum computation and chemistry under the name of the matrix product states (MPS) \cite{Klumper91,Vidal03}.
Compared to rank issues in standard tensor decompositions, the quasi-ranks in the TT decomposition can be determined in a stable way, e.g., through a rank-reduction using the truncated singular value decomposition. Moreover, by casting the data into the TT format, the paradigms of solving a huge system of linear equations, or eigenvalue decomposition of large-scale data can be reduced to solving  smaller scale sub-problems of the same kind\cite{Holtz-TT-2012,KressnerEIG2014}. 
Owing to the enhanced tractability in computation, the Hierarchical Tucker format and TTs have also been successfully used for tensor completion in e.g., seismic data analysis, hyperspectral imaging and parametric PDEs\cite{DaSilva2015131,Kressner2014,Rauhut2015,journals/siamsc/GrasedyckKK15},
Despite such success, TT decomposition as well as other tensor networks are yet to gain the same popularity in signal processing and machine learning as the standard CPD and Tucker decompositions. 
%
%tensor completion in HT (Herman, Da Silva for seismic data), and TT (Kressner, Steinlechner, Vandereycken for e.h. hyoperspectral images)
To this end, this article and its sequel aim to address this void in the literature, and present, for the first time, applications of tensor networks in some standard signal processing and machine learning paradigms, such as latent component analysis, denoising and feature extraction. We show that the framework presented can serve for the separation of signals even from a single data channel. We also present a novel tensor network based method to estimate factor matrices within CPD of high order tensors. 

As with many other tensor decompositions, the basic problem in the TT decomposition is to find optimal representation ranks of a tensor. Two different tasks may arise (i) when the TT-ranks are given, or (ii) when the approximation error is constrained to be smaller than a  predefined tolerance value or a predicted noise level.  Existing algorithms for the TT decomposition are based on truncated SVD and sequential projection
\cite{Vidal03,OseledetsTT09,OseledetsTT11}, whereby the core tensors are derived from leading singular vectors of the projected data onto the subspace of the other core tensors. This method is simple and works efficiently when data is amenable to  the so imposed strict models, as is the case in quantum physics. However, for general data, the ranks are not known beforehand, the truncation algorithm is less efficient, and the TT solutions do not achieve the optimal approximation error. On the other hand, for decompositions with a prescribed approximation accuracy, the algorithm is not guarantee to yield a tensor with minimal TT-rank. In this paper, we introduce novel algorithms to approximate a large-scale tensor by smaller-scale TT tensors with a particular emphasis on stability and minimum rank issues. This is achieved based on an alternating update scheme which sequentially updates one, two or three core tensors at a time.

The paper is organised as follows. The TT-tensors and operators for tensor manipulation are introduced in Section~\ref{sec::Preliminaries}. 
The TT-SVD algorithm is elaborated in Section~\ref{sec:ttsvd}. Since the TT-decomposition of order-3 tensors is equivalent to the Tucker-2 decomposition, algorithms for this case are presented in Section~\ref{sec:tt_order3}
and are used as a basic tool for higher order tensors. Section~\ref{sec:amcu} presents algorithms for the cases when the TT-rank is specified or when the noise level is given. 
%Update rules are optimised in sense of complexity, by exploiting progressive computation of contracted tensors. 
We show that the decompositions considered can perform even faster when a data tensor is replaced by its crude TT-approximation. The algorithm for this case is presented in Section~\ref{sec:prior_compression}.
The proposed suite of algorithms for TT-decomposition is verified by simulations on signal and image de-noising and latent variable analysis. 
A new tensorization method is also proposed in the context of image denoising.

%\vspace{-.5ex}
\section{Preliminaries}\label{sec::Preliminaries}

We shall next present the definitions of tensor contraction, tensor train decomposition, and orthogonalisation for a tensor train. 
The following three tensor contractions are defined for an order-$N$ tensor, $\tA$, of size $I_1 \times I_2 \times \cdots \times I_N$ and an order-$K$ tensor, $\tB$, of size  $J_1 \times J_2 \times \cdots \times J_K$.

%\vspace{-.8ex}
\begin{definition}[Tensor train contraction]\label{def_train_contract}\label{def_boxtime}
%Given a tensor $\tA$ of size $I_1 \times I_2 \times \cdots \times I_N$ and a tensor $\tB$ of size  $J_1 \times J_2 \times \cdots \times J_K$,
The train contraction performs a tensor contraction between the last mode of tensor $\tA$ and the first mode of tensor $\tB$, where $I_N = J_1$,
to yield a tensor $\tC = \tA \bullet \tB$ of size $I_1  \times \cdots \times I_{N-1} \times J_2 \times \cdots \times J_K$, the elements of which are given by \\[-1.6em]
\be
c_{i_1,\ldots,i_{N-1},j_2, \ldots,j_K} = \sum_{i_N = 1}^{I_N}  a_{i_1,\ldots,i_{N-1},i_N} \, b_{i_N,j_2, \ldots,j_K}  \notag .
\ee
%The train-contraction is denoted by $\tC = \tA \bullet \tB$.
\end{definition}\vspace{-.8ex}
Fig.~\ref{fig_TTtensor} illustrates the principle of the train contraction.

%Matrix multiplication is a train contraction product $\bA \bB = \bA \bullet \bB$.
%\vspace{-1ex}
\begin{definition}[Left contraction]\label{def_left_contract}
%Given a tensor $\tA$ of size $I_1 \times I_2 \times \cdots \times I_N$ and a tensor $\tB$ of size  $J_1 \times J_2 \times \cdots \times J_{K}$, 
With $I_k = J_k$ for $k = 1, \ldots, n$, 
the $n$-modes left-contraction between $\tA$ and $\tB$, denoted by 
$
\tC = \tA  \ltimesx_n \tB    
$, computes a contraction product between their first $n$ modes,  
and yields a tensor $\tC$ of size $I_{n+1}  \times \cdots \times I_{N} \times J_{n+1} \times \cdots \times J_K$, defined as\\[-1.6em]
\be
c_{i_{n+1},\ldots,i_{N},j_{n+1}, \ldots,j_K} = \sum_{i_1 = 1}^{I_1}\cdots \sum_{i_n = 1}^{I_n}   a_{i_1, \ldots, i_n,i_{n+1},\ldots,i_{N}} \, b_{i_1, \ldots, i_n, j_{n+1},\ldots, j_K} \, . \notag
\ee
\end{definition}

\vspace{-.6em}
\begin{definition}[Right contraction]\label{def_right_contract}
%Given a tensor $\tA$ of size $I_1 \times I_2 \times \cdots \times I_N$ and a tensor $\tB$ of size  $J_1 \times J_2 \times \cdots \times J_{K}$, 
With $I_{N-k} = J_{K-k}$ for $k = 0, 1, \ldots, n-1$,
the $n$-modes right tensor contraction between $\tA$ and $\tB$, denoted by $
\tC = \tA  \rtimesx_n \tB$, computes a contraction product between their last $n$ modes,  and yields a tensor $\tC$ of size $I_{1}  \times \cdots \times I_{N-n} \times J_{1} \times \cdots \times J_{K-n}$, defined as\\[-1.6em]
\be
c_{i_{1},\ldots,i_{N-n},j_{1}, \ldots,j_{K-n}} = \sum_{i_{N-n+1} = 1}^{I_{N-n+1}}\cdots \sum_{i_N = 1}^{I_N}   a_{i_1, \ldots,i_{N}} \, b_{j_1, \ldots, j_{K-n}, i_{N-n+1},\ldots, i_N} \, . \notag 
\ee
\end{definition}\vspace{-1ex}
 Fig.~\ref{fig_tt_tensor_et_contraction} illustrates the principles of the left and right contractions.

\setlength{\textfloatsep}{2ex}% Remove \textfloatsep
\begin{figure}
\centering
\subfigure[A TT-tensor of rank-($R_1, R_2,\ldots, R_{N-1}$)]
{
\psfrag{X1}[c][c]{\scalebox{1}{\color[rgb]{0,0,0}\setlength{\tabcolsep}{0pt}\begin{tabular}{c}  \smaller\smaller$\tX_1$\end{tabular}}}%
\psfrag{X2}[c][c]{\scalebox{1}{\color[rgb]{0,0,0}\setlength{\tabcolsep}{0pt}\begin{tabular}{c}  \smaller\smaller$\tX_2$\end{tabular}}}%
\psfrag{Xn1}[cc][cc]{\scalebox{1}{\color[rgb]{0,0,0}\setlength{\tabcolsep}{0pt}\begin{tabular}{c} \smaller\smaller \hspace{1ex}$\tX_{N-{\color[rgb]{.5,.5,.5}1}}$\end{tabular}}}%
%\psfrag{Xn1}[c][c]{\scalebox{1}{\color[rgb]{0,0,0}\setlength{\tabcolsep}{0pt}\begin{tabular}{c}  $\tX_{N-1}$\end{tabular}}}%
%
\psfrag{XN}[c][c]{\scalebox{1}{\color[rgb]{0,0,0}\setlength{\tabcolsep}{0pt}\begin{tabular}{c}  \smaller\smaller$\tX_{N}$\end{tabular}}}%
\psfrag{I1}[c][c]{\scalebox{1}{\color[rgb]{0,0,0}\setlength{\tabcolsep}{0pt}\begin{tabular}{c}  \smaller$I_1$\end{tabular}}}%
\psfrag{I2}[c][c]{\scalebox{1}{\color[rgb]{0,0,0}\setlength{\tabcolsep}{0pt}\begin{tabular}{c}  \smaller$I_2$\end{tabular}}}%
\psfrag{In1}[c][c]{\scalebox{1}{\color[rgb]{0,0,0}\setlength{\tabcolsep}{0pt}\begin{tabular}{c}  \smaller$I_{N-1}$\end{tabular}}}%
\psfrag{IN}[c][c]{\scalebox{1}{\color[rgb]{0,0,0}\setlength{\tabcolsep}{0pt}\begin{tabular}{c}  \smaller$I_{N}$\end{tabular}}}%
\psfrag{R1}[c][c]{\scalebox{1}{\color[rgb]{0,0,0}\setlength{\tabcolsep}{0pt}\begin{tabular}{c}  \smaller $R_1$\end{tabular}}}%
\psfrag{R2}[c][c]{\scalebox{1}{\color[rgb]{0,0,0}\setlength{\tabcolsep}{0pt}\begin{tabular}{c}  \smaller $R_2$\end{tabular}}}%
\psfrag{Rn2}[c][c]{\scalebox{1}{\color[rgb]{0,0,0}\setlength{\tabcolsep}{0pt}\begin{tabular}{c}  \smaller $R_{N-2}$\end{tabular}}}%
\psfrag{Rn1}[c][c]{\scalebox{1}{\color[rgb]{0,0,0}\setlength{\tabcolsep}{0pt}\begin{tabular}{c}  \smaller $R_{N-1}$\end{tabular}}}%
\psfrag{...}[c][c]{\scalebox{1}{\color[rgb]{0,0,0}\setlength{\tabcolsep}{0pt}\begin{tabular}{c}  \large $\cdots$\end{tabular}}}%
\psfrag{train contraction}[c][c]{\scalebox{1}{\color[rgb]{0,0,0}\setlength{\tabcolsep}{0pt}\begin{tabular}{c}\smaller train contraction \end{tabular}}}%
\includegraphics[width=.55\linewidth, trim = 0.0cm 0cm 0cm 0cm,clip=true]{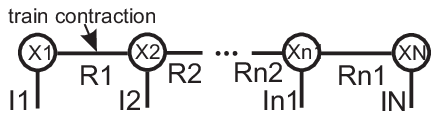}\label{fig_TTtensor}
}
\psfrag{I1}[c][c]{\scalebox{1}{\color[rgb]{0,0,0}\setlength{\tabcolsep}{0pt}\begin{tabular}{c}  \smaller $I_1$\end{tabular}}}%
\psfrag{In}[c][c]{\scalebox{1}{\color[rgb]{0,0,0}\setlength{\tabcolsep}{0pt}\begin{tabular}{c}  \smaller  $I_n$\end{tabular}}}%
\psfrag{IN}[c][c]{\scalebox{1}{\color[rgb]{0,0,0}\setlength{\tabcolsep}{0pt}\begin{tabular}{c} \smaller  $I_N$\end{tabular}}}%
\psfrag{J1}[c][c]{\scalebox{1}{\color[rgb]{0,0,0}\setlength{\tabcolsep}{0pt}\begin{tabular}{c}  \smaller  $J_1$\end{tabular}}}%
\psfrag{Jn}[c][c]{\scalebox{1}{\color[rgb]{0,0,0}\setlength{\tabcolsep}{0pt}\begin{tabular}{c}  \smaller  $J_n$\end{tabular}}}%
\psfrag{JN}[c][c]{\scalebox{1}{\color[rgb]{0,0,0}\setlength{\tabcolsep}{0pt}\begin{tabular}{c}  \smaller  $J_N$\end{tabular}}}%
\hfill
\subfigure[Left and right contractions]{
\includegraphics[width=.18\linewidth, trim = 0cm .01cm -0.3cm 0cm,clip=true]{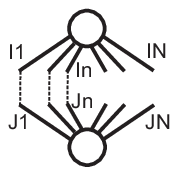}
\label{fig_leftcontraction}
\hfill
\psfrag{Jm}[c][c]{\scalebox{1}{\color[rgb]{0,0,0}\setlength{\tabcolsep}{0pt}\begin{tabular}{c}  \smaller  $J_m$\end{tabular}}}%
\psfrag{Im}[c][c]{\scalebox{1}{\color[rgb]{0,0,0}\setlength{\tabcolsep}{0pt}\begin{tabular}{c}  \smaller  $I_m$\end{tabular}}}%
%\subfigure[Right contraction]
\includegraphics[width=.18\linewidth, trim = -.3cm .01cm 0cm 0cm,clip=true]{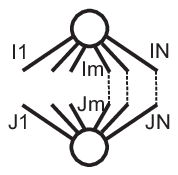}\label{fig_rightcontraction}
\label{fig_tt_tensor_et_contraction}}
%
%\vspace{-1em}
\caption{\subref{fig_TTtensor} Graphical illustration of a TT-tensor $\tX = \tX_1 \bullet \tX_2 \bullet \cdots  \bullet \tX_N$.  A node represents a 3-rd order core tensor $\tX_n$ of size $R_{n-1}\times I_n \times R_n$. \subref{fig_rightcontraction} Left and right contractions between two tensors.}
\label{fig_TTtensor_}
%\vspace{-1ex}
\end{figure}

%\vspace{-1ex}
%\setlength{\textfloatsep}{2ex}% Remove \textfloatsep
%\begin{figure}
%\centering
%\psfrag{I1}[c][c]{\scalebox{1}{\color[rgb]{0,0,0}\setlength{\tabcolsep}{0pt}\begin{tabular}{c}  \smaller $I_1$\end{tabular}}}%
%\psfrag{In}[c][c]{\scalebox{1}{\color[rgb]{0,0,0}\setlength{\tabcolsep}{0pt}\begin{tabular}{c}  \smaller  $I_n$\end{tabular}}}%
%\psfrag{IN}[c][c]{\scalebox{1}{\color[rgb]{0,0,0}\setlength{\tabcolsep}{0pt}\begin{tabular}{c} \smaller  $I_N$\end{tabular}}}%
%\psfrag{J1}[c][c]{\scalebox{1}{\color[rgb]{0,0,0}\setlength{\tabcolsep}{0pt}\begin{tabular}{c}  \smaller  $J_1$\end{tabular}}}%
%\psfrag{Jn}[c][c]{\scalebox{1}{\color[rgb]{0,0,0}\setlength{\tabcolsep}{0pt}\begin{tabular}{c}  \smaller  $J_n$\end{tabular}}}%
%\psfrag{JN}[c][c]{\scalebox{1}{\color[rgb]{0,0,0}\setlength{\tabcolsep}{0pt}\begin{tabular}{c}  \smaller  $J_N$\end{tabular}}}%
%%
%%\subfigure[Left contraction]
%\hfill{
%\includegraphics[width=.4\linewidth, trim = -1.2cm 0cm -1.2cm 0cm,clip=true]{left_contract}
%\label{fig_leftcontraction}}
%\hfill
%\psfrag{Jm}[c][c]{\scalebox{1}{\color[rgb]{0,0,0}\setlength{\tabcolsep}{0pt}\begin{tabular}{c}  \smaller  $J_m$\end{tabular}}}%
%\psfrag{Im}[c][c]{\scalebox{1}{\color[rgb]{0,0,0}\setlength{\tabcolsep}{0pt}\begin{tabular}{c}  \smaller  $I_m$\end{tabular}}}%
%%\subfigure[Right contraction]
%{
%\includegraphics[width=.4\linewidth, trim = -1.2cm 0cm -1.2cm 0cm,clip=true]{right_contract}\label{fig_rightcontraction}}
%\hfill
%\vspace{-1ex}
%\caption{Left and right contractions between two tensors.}
%\label{fig_tt_tensor_et_contraction}
%\vspace{-1ex}
%\end{figure}

%\vspace{-1ex}
\begin{definition}[Tensor train decomposition\cite{Vidal03,OseledetsTT09}]\label{def_tttensor}
A tensor train decomposition of a tensor $\tX$ of size $I_1 \times I_2 \times \cdots \times I_N$, with a TT-rank $(R_1, R_2, \ldots,R_{N-1})$, has the form%\\[-1.5em]
\begin{align}
\tX = \sum_{r_1 = 1}^{R_1} \, \sum_{r_2 = 1}^{R_2}  \cdots \sum_{r_{N-1} = 1}^{R_{N-1}} \tX_1(:,r_1,1) \circ \tX_2(r_1,:,r_2) \circ \cdots \circ \tX_N(r_{N-1},:,1) 
\notag 
\end{align}
where $\tX_n$ are core tensors of size $R_{n-1} \times I_n \times R_{n}$, $R_0 = R_{N} = 1$, 
and $\tX_n(r_{n-1},:,r_{n})$ are vertical fibers of $\tX_n$, while 
the symbol $``\circ''$ designates the outer product.
\end{definition}
%\vspace{-1ex}
A tensor $\tX$ in the TT-format is called a TT-tensor, and can be expressed equivalently through: 
\begin{itemize}[leftmargin=*,align=left]
\item Train-contractions  as 
$
	\tX =  \tX_1 \bullet \tX_2 \bullet \cdots \bullet \tX_{N-1} \bullet \tX_N \, , \notag %  \label{equ_TT_boxcross}.
$
\item 
A product of its sub TT-tensors
$
\tX = \tX_{<n} \bullet \tX_{n:m} \bullet  \tX_{>m}  
$, where $\tX_{<n}$ and $\tX_{>m}$ are respectively the TT-tensors composed by all core tensors to the left of $\tX_n$
and to the right of $\tX_m$, $m\ge n$, that is \\[-1.8em]
\be
\tX_{<n} &=& \tX_1 \bullet \tX_2 \bullet \cdots \bullet \tX_{n-1} \,\notag \label{equ_Xlen} \\[-.3em]
\tX_{>m} &=& \tX_{m+1} \bullet \tX_{m+2} \bullet \cdots \bullet \tX_{N}\notag \label{equ_Xgen}\\[-.3em]
\tX_{n:m} &=& \tX_{n} \bullet \tX_{n+1} \bullet \cdots \bullet \tX_{m}\notag \label{equ_Xnm}.
\ee
\end{itemize}
%In a more compact form, we can express the TT decomposition using shorthand notation as 
%\be
%	\tX \approx  \llbracket \tX_1; \tX_2; \ldots; \tX_{N-1}; \tX_N \rrbracket .
%\ee
%\vspace{-1ex}
It is important to note that a TT-representation can always be compressed, e.g., using the TT-SVD algorithm (Algorithm~\ref{alg_TT_svd} below) with perfect accuracy $\epsilon$ = 0,
 such that the representation ranks satisfy\\[-1.3em]
\begin{equation}
R_n \le \min (R_{n-1}I_n, I_{n+1}R_{n+1})  \quad \text{or}\;\;
	R_n \le \min\left(\prod_{k = 1}^{n} I_k, \prod_{l = n+1}^{N} I_l  \right), \notag 
\end{equation}\\[-1.2em]
 for $n = 1, 2, \ldots, N-1$. The first inequalities above imply the second ones.
%A simple example of a tensor train is the product of matrices 
%\be
%\bX = \bX_1 \bullet \bX_2 \bullet \bX_3  \notag
%\ee 
%where $\bX_1$, $\bX_2$ and $\bX_3$ are of size $I_1 \times R_1$, $R_1 \times 1 \times R_2$, and $R_2 \times I_3$, respectively 

\begin{definition}[\bf Tensor unfolding]
Let $\bn_1, \bn_2,\ldots, \bn_J$ be disjoint subsets, and $[\bn_1, \bn_2, \ldots, \bn_J]$  a permutation of $[1, 2, \ldots, N]$, where $\bn_j = [n_{j}(1), n_{j}(2), \ldots, n_{j}(K_j)]$ and $K_1 + K_2 + \cdots +K_J = N$.
The mode-($\bn_1,\bn_2,\ldots, \bn_J$) unfolding converts an order-$N$ tensor $\tX$ into an order-$J$ tensor $\tY$, given by $
\tX(i_1,i_2, \ldots, i_N) = \tY(i_{\bn_1}, i_{\bn_2}, \ldots, i_{\bn_J})$,
%\be
%\tX(i_1,i_2, \ldots, i_N) = \tY(i_{\bn_1}, i_{\bn_2}, \ldots, i_{\bn_J}) \label{equ_unfolding}
%\ee\\[-1.5em]
where $i_{\bn_j}$ is a linear index of ($i_{n_{j}(1)}, i_{n_{j}(2)}, \ldots, i_{n_{j}(K_j)}$) \cite{Phan2012-Kron}.

The unfolding operator is denoted by 
$\tY = [\tX]_{(\bn_1,\bn_2,\ldots, \bn_J)}$.
When $\vm = \{1,\ldots,N\}\setminus {\bn}$, and its entries are sorted in an ascending order, the mode-$(\bn,\vm)$ unfolding is also known as mode-$\bn$ matricization and is denoted by $ [\tX]_{(\bn,\vm)} = \bX_{(\bn)}$. 
\end{definition}

\begin{definition}[\bf Left and right orthogonality conditions for the core tensor $\tX_n$\cite{Holtz-TT-2012,Kressner2014MC}]
Consider a TT-tensor $\tX = \tX_1 \bullet \tX_2 \bullet \cdots \bullet \tX_N$. Then, its core tensor $\tX_n$ is said to satisfy the left-orthogonality condition if 
$
	{\tX}_n \ltimesx_{2} {\tX}_n = \bI_{R_{n}} $, 
and the right-orthogonality condition if
$  {\tX}_n \rtimesx_{2}  {\tX}_n  = \bI_{R_{n-1}} $. 
\end{definition}
%The left and right orthogonalisations can be performed using Algorithms~\ref{alg_TT_left_orthogonalization} and \ref{alg_TT_right_orthogonalization}.
%\vspace{-.5em}
The mode-$n$ left orthogonalisation can be achieved using the orthogonal Tucker-1 decomposition of $\tX_n$ in the form $\tX_n = {\tilde{\tX}}_{n} \bullet  \bL$, or from the QR decomposition of the mode-(1,2) matricization $[\tX_{n}]_{(1,2)} = \bQ \, \bR$, where $[{\tilde{\tX}}_{n}]_{(1,2)}  = \bQ$ is an orthogonal matrix,  and $\bL  = \bR$. The mode-$n$ left orthogonalised TT-tensor $\tX$ now becomes% \\[-1.6em]
\begin{align}
\tX %&= \tX_1 \bullet  \cdots \bullet  \tX_{n-1} \bullet \tX_{n} \bullet \tX_{n+1} \bullet \cdots \bullet \tX_N \notag \\
&= \tX_1 \bullet  \cdots \bullet  \tX_{n-1} \bullet {\tilde{\tX}}_{n} \bullet (\bL \bullet \tX_{n+1}) \bullet \cdots \bullet \tX_N  \notag \label{equ_leftorthogonal}.
\end{align}%\\[-1.5em]
Similarly, the mode-$n$ right orthogonalisation performs the orthogonal Tucker-1 decomposition $\tX_n  = \bR \bullet {\tilde{\tX}}_{n}$, and the resulting TT-tensor $\tX$ becomes\\[-1.5em]
\begin{align}
\tX %&= \tX_1 \bullet  \cdots \bullet  \tX_{n-1} \bullet \tX_{n} \bullet \tX_{n+1} \bullet \cdots \bullet \tX_N \notag \\
&= \tX_1 \bullet  \cdots \bullet  (\tX_{n-1} \bullet  \bR) \bullet {\tilde{\tX}}_{n} \bullet   \tX_{n+1} \bullet \cdots \bullet \tX_N \, .\notag %\label{equ_rightorthogonal}.
\end{align}

%\vspace{-.8em}
\begin{definition}[\bf Left orthogonalisation up to mode $n$]
The left orthogonalization of a tensor $\tX$ up to mode-$n$ performs $(n-1)$ left orthogonalizations of the core tensors to the left of $n$ such that 
${\tX}_k \ltimesx_{2} {\tX}_k = \bI_{R_{k}}$  for $k = 1, 2, \ldots, n-1$.
\end{definition}
%\vspace{-.8em}
\begin{definition}[\bf Right orthogonalisation up to mode $n$]
The right orthogonalization of a tensor  $\tX$ up to mode-$n$ performs $(N-n)$ right orthogonalizations of the core tensors to the right of $n$ such that 
${\tX}_k \rtimesx_{2} {\tX}_k = \bI_{R_{k-1}}$  for $k = n+1, n+2, \ldots, N$.
\end{definition}
%Both left and right orthogonalisations do not change the tensor, i.e, $\|\tX - \tilde{\tX}\|_F^2 = 0$.
%\vspace{-.5em}
In this paper, we consider the following two approximations of a tensor $\tY$ by a TT-tensor $\tX$
\begin{itemize}[leftmargin=*,align=left]
\item {\bf The TT-approximation with a given TT-rank}, which is based on a minimisation of the Frobenius norm of the approximation error, in the form% \\[-1.7em]
\be
	\min \quad D = \|\tY  - \tX\|_F^2 \, \label{equ_cost_TT}.
\ee
%\vspace{-1.5em}
\item {\bf The TT-approximation with a given approximation accuracy}, which is typically used in the presence of noise or when the TT-rank is not specified, and is based on the solution of a denoising problem,  % \\[-1.7em]
\be
	\|\tY  - \tX\|_F^2  \le  \varepsilon^2\, \label{equ_cost_TT_noisecontraint},
\ee  %\\[-1.5em]
such that the TT-rank of $\tX$ is minimum. In (\ref{equ_cost_TT_noisecontraint}), $\varepsilon^2$ represents the noise level, or an approximation accuracy.
\end{itemize}

\section{TT-SVD or TT-truncation algorithm}\label{sec:ttsvd}

In many practical settings, the tensor train decomposition can be performed efficiently using a sequential projection and truncation algorithm, known as the TT-SVD\cite{Vidal03,OseledetsTT09,OseledetsTT11}. More specifically, the first core $\tX_1$ is obtained from the $R_1$ leading singular vectors of the reshaping matrix $\bY_{(1)}$, subject to the error norm being less than $\epsilon$ times the data norm, that is,% \\[-1.5em]
\be
	\|\bY_{(1)} - \bU \, \diag(\bsigma) \, \bV^T \|_F^2 \le  \epsilon^2 \, \|\bY_{(1)}\|_F^2  \notag \label{eq_tt_truncation}
\ee
or $\|\bsigma\|_2^2 \ge (1-\epsilon^2) \|\bY_{(1)}\|_F^2$.
The projected data $\diag(\bsigma) \bV^T$ is then reshaped into a matrix $\bY_2$ of size $(R_1 I_2) \times (I_3I_4\cdots I_N)$, and the second core tensor $\tX_2$ is estimated from the leading left singular vectors of this matrix, whereas the rank $R_2$ is chosen such that the norm of the residual is less than $\sqrt{1-\epsilon^2} \|\bY_2\|_F$.

The sequential projection and truncation procedure is repeated in order to find the remaining core tensors.
The algorithm, summarised in Algorithm~\ref{alg_TT_svd}, executes only $(N-1)$ sequential data projections and $(N-1)$ truncated-SVD of the projected data in order to estimate $N$ core tensors, and is quite simple to implement.
The TT-SVD algorithm can be modified for a TT-decomposition with TT-ranks specified, and can be efficiently implemented if the input is already provided in the TT format with small ranks and is used for further truncation.
This two-stage ``TT-SVD and truncation'' procedure is illustrated in Example~\ref{ex_TT_reconstruction_exp} in Section~\ref{sec::examples}.
In terms of the approximation accuracy, it can be shown that \cite{OseledetsTT09}%\\[-1.5em]
\be
	\|\tY - \tX\|_F^2 \le \sum_{k = 1}^{N-1} \epsilon_k^2		\notag 
\ee%\\[-1.2em]
where $\epsilon_k$ is the truncation error at $k$-th step.

When the data admits the TT format with small noise, the TT-SVD works well, however, the algorithm is less efficient when data is heavily corrupted by noise or when a TT-approximation is with low TT-rank. 
%\vspace{-.5em}
\begin{remark}More specifically, for the approximation problem in (\ref{equ_cost_TT}), TT-SVD is not guaranteed to achieve the minimum approximation error, as illustrated in Examples~\ref{ex_TT_reconstruction_exp} and \ref{ex_TT_separation_exp} in Section~\ref{sec::examples}.
\end{remark}\vspace{-.5em}
For the denoising problem in (\ref{equ_cost_TT_noisecontraint}), the resulting TT-tensor from TT-SVD satisfies the approximation condition, but often exhibits a relatively high TT-rank. 
%\vspace{-.5em}
\begin{remark}
An increase in the TT-rank of $\tX$ makes it easier to explain the data, so that the approximation error tends to be smaller than the tolerance error $\varepsilon^2$. However, when the TT-ranks are high, adding more terms into $\tX$ implies adding noise into the approximation, and reducing the reconstruction error. For the case of TT-SVD, this is illustrated in Example~\ref{ex_TT_denoising_signal_2}.
\end{remark}
In other words, TT-SVD tends to select a higher TT-rank than needed for the denoising problem. 
Following on the two Remarks above, the next sections present more efficient algorithms for the two approximation problems in (\ref{equ_cost_TT}) and (\ref{equ_cost_TT_noisecontraint}).

\setlength{\textfloatsep}{1ex}% Remove \textfloatsep
\setlength{\algomargin}{.7em}
\begin{algorithm}[t!]
%\SetAlgoInsideSkip 
\SetFillComment
\SetSideCommentRight
\CommentSty{\footnotesize}
%\scriptsize
\caption{{\tt{TT-SVD\cite{Vidal03,OseledetsTT09}}}\label{alg_TT_svd}}
\DontPrintSemicolon \SetFillComment \SetSideCommentRight
\KwIn{Data tensor $\tY$:  $(I_1 \times I_2 \times \cdots \times I_N)$, TT-rank $(R_1,R_2,\ldots,R_{N-1})$ or approximation accuracy $\epsilon$ 
} 
\KwOut{A TT-tensor $\tX =  \tX_1 \bullet \tX_2 \bullet \cdots \bullet \tX_N$  
 such that $\min \|\tY - \tX\|_F^2$ or $\|\tY - \tX \|_F^2 \le \epsilon^2 \|\tY\|_F^2$
} \SetKwFunction{mreshape}{reshape}
%\SetKwFunction{svds}{svds} 
\SetKwFunction{tsvd}{truncated\_svd} 
\SetKwFunction{bestTT}{bestTT\_approx} 
\Begin{
% {\mtcc{{\color{blue}{\bf{Stage 1:}}} Generate a random rank-1 tensor\dashrule}}
\For {$n = 1, \ldots, N-1$}{
\nl 	$\bY = \mreshape(\tY, (I_n\,R_{n-1}) \times \prod_{k=n+1}^{N} I_k)$\;
\nl 	Truncated SVD $\bY \approx  \bU \, \diag(\bsigma) \, \bV^T$ with given rank $R_{n}$ or  such that $\displaystyle  \|\bsigma\|_2^2  \ge (1-\epsilon^2) \|\bY \|_F^2$\;
\nl 	$\tX_n = \mreshape(\bU, R_{n-1} \times I_n \times R_{n})$\;
\nl 	$\tY \leftarrow \diag(\bsigma)\, \bV^T$\;
}
\nl $\tX_{N} = \tY$\;
}
\end{algorithm} 
%\vspace{-1em}

\comment{
 \setlength{\algomargin}{.7em}
\begin{algorithm}[Ht!]
%\SetAlgoInsideSkip 
\SetFillComment
\SetSideCommentRight
\CommentSty{\footnotesize}
%\scriptsize
\caption{{\tt{Tucker-2 or Order-3 TT decomposition}}\label{alg_TT3}}
\DontPrintSemicolon \SetFillComment \SetSideCommentRight
\KwIn{Data tensor $\tY$:  $(I_1 \times I_2 \times I_3)$,  estimation accuracy $\varepsilon$
} 
\KwOut{TT-tensor $\tX = \bX_1 \bullet \tX_2 \bullet \bX_3$ of minimum rank-$ (R_1, R_{2})$ such that $\|\tY - \tX\|_F^2 \le \varepsilon^2$}
 \SetKwFunction{mreshape}{reshape}
\SetKwFunction{eig}{svds} 
\Begin{
\nl Initialize $\tX_3$ as an identity matrix of size $I_3 \times I_3$\;
\Repeat{a stopping criterion is met}{
\nl $\bQ_1 = (\tY \bullet \bX_3^T)  \rtimesx_2   (\tY \bullet \bX_3^T)$\;
\nl EVD of $\bQ_1 \approx \bX_1 \diag(\lambda_1, \ldots, \lambda_{R_1}) \, \bX_1^T $ such that $\sum_{r = 1}^{R_1}  \lambda_{r} \ge \|\tY\|_F^2 - \varepsilon^2 \ge \sum_{r = 1}^{R_1-1}  \lambda_{r} $\;%\tcc*{EVD of $\bQ_1$}
\nl $\bQ_3 = (\bX_1^T \bullet \tY)  \, \ltimesx_2   \,  (\bX_1^T \bullet \tY)$\;
\nl EVD of  $\bQ_3 \approx \bX_3^T \diag(\lambda_1, \ldots, \lambda_{R_2}) \, \bX_3 $ such that $\sum_{r = 1}^{R_2}  \lambda_{r}  \ge \|\tY\|_F^2 - \varepsilon^2 \ge \sum_{r = 1}^{R_2-1}  \lambda_{r}  $\; %\tcc*{EVD of $\bQ_3$}
}
\nl $\tX_2 = \bX_1^T \bullet \tY \bullet \bX_3^T$
}
\end{algorithm} 
}

%\vspace{-1ex}
\section{A TT-decomposition for Order-3 Tensors}\label{sec:tt_order3}

Before presenting algorithms for the TT-decomposition of tensors of high order, we shall start with the TT-decomposition for order-3 tensors, and illuminate its relation to the Tucker-2 decomposition\cite{Tucker66,Lathauwer_HOOI}. The algorithm developed in this section will serve as a basis for updating core tensors in TT-decompositions of higher order tensors. 
%
%Basically, the tensor train decomposition is performed through   
%\begin{itemize}
%\item {\bf An approximation with given rank-($R_1,R_2$)}
% by minimising the error function 
%\be
% D = \|\tY - \bX_1 \bullet \tX_2 \bullet \bX_3 \|_F^2 \label{equ_tt2_rankgiven}
%\ee
%\item {\bf An approximation with a prescribed accuracy $\varepsilon^2$}
%\be
%D = \|\tY - \bX_1 \bullet \tX_2 \bullet \bX_3 \|_F^2  \le \varepsilon^2 \label{equ_TT3_denoising}
%\ee
%such that the TT-rank-$(R_1,R_2)$ is minimum. %Here by minimum rank, we mean that the number of parameters is minimum.
%When $\varepsilon^2$ represents the noise level, the above approximation is also known as {\bf denoising}.
%\end{itemize}

\begin{definition}[Tucker-2 decomposition\cite{Tucker66}]\label{def_tucker2}
Tucker-2 decomposition of an order-3 tensor $\tY$ of size $I_1 \times I_2 \times I_3$ is given by\\[-1.4em]
\be
\tY  = \bX_1 \bullet \tX_2 \bullet \bX_3\,, \label{eq_tucker2}
\ee
where $\tX_2$ is the core tensor of size $R_1 \times I_2 \times R_2$, 
$\bX_1$ and $\bX_3$ are the two factor matrices of sizes $I_1 \times R_1$ and $R_2 \times I_3$, respectively, while the multilinear rank of the decomposition is $(R_1, R_2)$.
\end{definition}
By definition, the Tucker-2 decomposition is a TT decomposition of an order-3 tensor.
Because of rotational ambiguity, without loss in generality, the matrices $\bX_1$ and $\bX_3$ can be assumed to have orthonormal columns (for $\bX_1$) and rows (for $\bX_3$), that is $\bX_1^T \bX_1 = \bI_{R_1}$ and $\bX_3 \bX_3^T = \bI_{R_2}$. The second core tensor $\tX_2$ is then given in a closed-form as $\tX_2  = \bX_1^T \bullet \tY \bullet \bX_3^T$, % \\[-1.5em]
%\be
%	\tX_2  = \bX_1^T \bullet \tY \bullet \bX_3^T\, ,		\notag \label{eq_X2_tucker2}
%\ee\\[-1.5em]
and the Frobenius norm, $\|\bY - \bX_1 \bullet \tX_2 \bullet \bX_3\|_F^2$, of the approximation can be expressed as \\[-1.5em]
\be
D &=& \|\tY\|_F^2 - \| \bX_1^T \bullet \tY \bullet \bX_3^T \|_F^2 \notag\\
    &=& \|\tY\|_F^2 - \tr( \bX_1^T \, \bQ_1 \,  \bX_1)  \, ,	\notag 
\ee\\[-1.5em]
where $\bQ_1 = (\tY \bullet \bX_3^T)  \rtimesx_2   (\tY \bullet \bX_3^T)$ is a symmetric matrix of size $I_1 \times I_1$.

\vspace{-.5em}
\begin{remark}
{\bf For the TT-decomposition in (\ref{equ_cost_TT})}, the new estimate $\bX_1$ comprises $R_1$ principal eigenvectors of $\bQ_1$.
\end{remark}
\vspace{-.8em}
\begin{remark}
{\bf{For the denoising problem in (\ref{equ_cost_TT_noisecontraint})}}, $\bX_1$ is obtained as a solution to the following problem% \\[-1.5em]
\be
\tr(\bX_1^T  \, \bQ_1  \,  \bX_1)  \ge \|\tY\|_F^2 - \varepsilon^2.    \notag 
\ee%\\[-1.5em]
This implies that $\bX_1$ takes $R_1$ principal eigenvectors of $\bQ_1$, where $R_1$ is the smallest number of eigenvalues $\lambda_1 \ge \lambda_2 \ge \cdots \ge \lambda_{R_1}$ of $\bQ_1$ such that their norm exceeds the threshold $\|\tY\|_F^2 - \varepsilon^2$, 
that is% \\[-1.5em]
\be
\sum_{r = 1}^{R_1} \lambda_r \ge  \|\tY\|_F^2 - \varepsilon^2  > \sum_{r = 1}^{R_1-1} \lambda_r  \,. \notag \label{equ_determine_R2}
\ee %\\[-1.2em]
Similarly, the core matrix $\bX_3$ of size $R_2 \times I_3$ comprises $R_2$ principal eigenvectors of the matrix
$\bQ_3 = (\bX_1^T \bullet \tY)  \, \ltimesx_2   \,  (\bX_1^T \bullet \tY)$, 
where $R_2$ is either given or determined based on the accuracy $\|\tY\|_F^2 - \varepsilon^2$.
%The algorithm is summarized in Algorithm~\ref{alg_TT3}, and  sequentially updates  $\bX_1$ and $\bX_3$. %, and only needs to initialize $\bX_3$ as an identity matrix.
The algorithm sequentially updates  $\bX_1$ and $\bX_3$.
%Since the ranks $R_1$ and $R_2$ gradually reduce, except in the first update round of $\bX_1$ and $\bX_3$, there is no need to compute the full eigenvalue decompositions, but only $R_1$ or $R_2$ principal eigenvectors and their associated eigenvalues of $\bQ_1$ or $\bQ_3$. 
\end{remark}

\section{Alternating Multi-Cores Update Algorithms}\label{sec:amcu}

This section presents novel algorithms for the TT-decomposition. 
%We show that the proposed algorithms explain the data better than the truncation and rounding algorithm, i.e., Algorithm~\ref{alg_TT_svd}. 
We first present a simple form of the Frobenius norm of a TT-tensor, followed by a formulation of optimisation problems to update single or a few core tensors. 

%\begin{lemma}[\bf Frobenius norm of TT-tensor]\label{lem_Frob_norm}
%Frobenius norm of a TT-tensor $\tX = \tX_1 \bullet \tX_2 \bullet \cdots \bullet \tX_N$ can be expressed in a quadratic form of $\tX_n$ as 
%\be
%\|\tX\|_F^2  = \vtr{\tX_{n:m}}^T \,  \left(\bQ_{R,m} \otimes \bI_{I_{n:m}} \otimes \bQ_{L,n} \right) \, \vtr{\tX_{n:m}} \,,
%\ee
%where  $``\otimes''$ represents the Kronecker product, $I_{n:m} = I_n I_{n+1} \cdots I_m$, $\bQ_{L,n}$  and $\bQ_{R,m}$ are left and right self-contractions of the core tensors $\tX_k$ up to the core tensor $\tX_n$ and the core tensor $\tX_m$, defined as
%%\be
%%\bQ_{L,n} =  \left[\tX_1 \bullet \tX_2 \bullet \cdots \bullet \tX_{n-1} \right]_{(3)} \, \left[\tX_1 \bullet \tX_2 \bullet \cdots \bullet \tX_{n-1} \right]_{(3)}^T
%%\\
%%\bQ_{R,n} =  \left[\tX_{n+1} \bullet \tX_{n+2} \bullet \cdots \bullet \tX_{N} \right]_{(1)} \,  \left[\tX_{n+1} \bullet \tX_{n+2} \bullet \cdots \bullet \tX_{N} \right]_{(1)}^T.
%%\ee
%\be
%\bQ_{L,n} &=&  \tX_{<n} \ltimesx_{n-1} \tX_{<n}  \quad \in \Real^{R_n \times R_n}
%\\
%\bQ_{R,m} &=&  \tX_{>m}  \rtimesx_{N-m}  \tX_{>m}  \quad \in \Real^{R_{m+1} \times R_{m+1}} \,. 
%\ee
%%
%Under the left-orthogonalisation up to the core tensor $\tX_n$, and the right-orthogonalisation up to the core $\tX_m$, the Frobenius norm of $\tX$ is  equivalent to the Frobenius norm of $\tX_{n:m}$
%\be
%\|\tX\|_F^2   = \|\tX_{n:m}\|_F^2 .
%\ee
%\end{lemma}

\begin{lemma}[\bf Frobenius norm of a TT-tensor]\label{lem_Frob_norm}
Under the left-orthogonalisation up to $\tX_n$, and the right-orthogonalisation up to $\tX_m$, where $n\le m$, the Frobenius norm of a TT-tensor $\tX = \tX_1 \bullet \tX_2 \bullet \cdots \bullet \tX_N$ is  equivalent to the Frobenius norm of $\tX_{n:m}$, that is, $
\|\tX\|_F^2   = \|\tX_{n:m}\|_F^2 .	 \notag \label{eq_frob_Xnm}$
\end{lemma}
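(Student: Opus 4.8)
The plan is to reduce the statement to two orthogonality facts about the ``boundary'' sub-tensors $\tX_{<n}$ and $\tX_{>m}$, and then to expand the squared Frobenius norm of the factorisation $\tX = \tX_{<n} \bullet \tX_{n:m} \bullet \tX_{>m}$ directly in index form, letting the boundary orthogonality annihilate all cross terms over the connecting bonds.

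First I would prove, by induction on the number of accumulated cores, that left-orthogonality of each individual core propagates to the whole left block. Writing $\bU$ for the matricization of $\tX_{<n}$ whose columns are indexed by the trailing bond $r_{n-1}$ and whose rows are indexed by $(i_1,\dots,i_{n-1})$, I claim that $\bU^T\bU = \bI_{R_{n-1}}$, i.e. the left block is itself left-orthogonal. The base case is immediate, since $R_0 = 1$ makes the hypothesis $\tX_1 \ltimesx_2 \tX_1 = \bI_{R_1}$ say exactly that the columns of $[\tX_1]_{(1,2)}$ are orthonormal. For the inductive step I use $\tX_{1:k} = \tX_{1:k-1}\bullet\tX_k$ and write out the entry $(\bU_k^T\bU_k)_{r_k,r_k'}$ in coordinates: summing first over the physical indices $i_1,\dots,i_{k-1}$ collapses the inner bond pair to $\delta_{r_{k-1},r_{k-1}'}$ by the inductive hypothesis, and then summing over $i_k$ yields $\delta_{r_k,r_k'}$ by the left-orthogonality $\tX_k \ltimesx_2 \tX_k = \bI_{R_k}$ of the last core. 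A symmetric induction, contracting over the last modes and using $\tX_k \rtimesx_2 \tX_k = \bI_{R_{k-1}}$, shows that the right block $\tX_{>m}$ has orthonormal rows, $\bV\bV^T = \bI_{R_m}$, where $\bV$ is the matricization of $\tX_{>m}$ with rows indexed by the leading bond $r_m$.

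With these two facts in hand I would expand
\[
\|\tX\|_F^2 = \sum_{i_1,\dots,i_N}\Bigl(\sum_{r_{n-1},r_m} \tX_{<n}\,\tX_{n:m}\,\tX_{>m}\Bigr)^2,
\]
introduce a second pair of dummy bond indices $r_{n-1}',r_m'$ arising from the square, and carry out the summation over the boundary physical modes. The sum over $(i_1,\dots,i_{n-1})$ contributes $(\bU^T\bU)_{r_{n-1},r_{n-1}'} = \delta_{r_{n-1},r_{n-1}'}$, and the sum over $(i_{m+1},\dots,i_N)$ contributes $(\bV\bV^T)_{r_m,r_m'} = \delta_{r_m,r_m'}$. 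These two Kronecker deltas kill every cross term, leaving exactly $\sum_{r_{n-1},r_m}\sum_{i_n,\dots,i_m}\tX_{n:m}(r_{n-1},i_n,\dots,i_m,r_m)^2 = \|\tX_{n:m}\|_F^2$, which is the assertion.

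I expect the only real obstacle to be the bookkeeping in the inductive step: keeping the bond indices and the matricization conventions consistent so that each accumulated contraction reduces cleanly to the single-core relations $\tX_k \ltimesx_2 \tX_k = \bI_{R_k}$ and $\tX_k \rtimesx_2 \tX_k = \bI_{R_{k-1}}$. The norm expansion itself is then routine. It is worth noting explicitly that the hypotheses of left-orthogonalisation up to $\tX_n$ and right-orthogonalisation up to $\tX_m$ are precisely what is needed to drive the two inductions, and that no assumption is placed on the middle cores $\tX_n,\dots,\tX_m$, which is exactly why their entire Frobenius norm survives.
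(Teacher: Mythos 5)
Your proof is correct and follows essentially the same route as the paper: both reduce the claim to the facts that the matricization of $\tX_{<n}$ has orthonormal columns and that of $\tX_{>m}$ has orthonormal rows, whence the middle block's norm is preserved. The only difference is that you supply the inductive propagation of core-wise orthogonality to the whole boundary blocks, which the paper simply asserts in one line.
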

%The proof is given in Appendix~\ref{app_proof_lem_Frob_norm}.
\begin{proof}
With the above left and right orthogonalisations, the two matricizations  $[\tX_{<n}]_{(n)}^T$ and 
$[\tX_{>m}]_{(1)}^T$ are orthogonal matrices. Hence, 
$
\| \tX \|_F^2  = \|  [\tX_{<n}]_{(n)}^T   \bullet \tX_{n:m} \bullet [\tX_{>m}]_{(1)} \|_F^2  
 =\|  \tX_{n:m}\|_F^2$.
\end{proof}

\setlength{\textfloatsep}{1ex}
\begin{figure}
\centering
%\subfigure[A TT-tensor of rank-($R_1, \ldots, R_{N-1}$)]{
\psfrag{Y}[c][c]{\scalebox{1}{\color[rgb]{0,0,0}\setlength{\tabcolsep}{0pt}\begin{tabular}{c}  \smaller  $\tY$\end{tabular}}}%
\psfrag{X1}[c][c]{\scalebox{1}{\color[rgb]{0,0,0}\setlength{\tabcolsep}{0pt}\begin{tabular}{c} \smaller   $\tX_1$\end{tabular}}}%
\psfrag{Xn}[c][c]{\scalebox{1}{\color[rgb]{0,0,0}\setlength{\tabcolsep}{0pt}\begin{tabular}{c}   \smaller $\tX_n$\end{tabular}}}%
\psfrag{Xm}[c][c]{\scalebox{1}{\color[rgb]{0,0,0}\setlength{\tabcolsep}{0pt}\begin{tabular}{c}  \smaller  $\tX_{m}$\end{tabular}}}%
\psfrag{Xm1}[c][c]{\scalebox{1}{\color[rgb]{0,0,0}\setlength{\tabcolsep}{0pt}\begin{tabular}{c}  \smaller  $\tX_{m+1}$\end{tabular}}}%
\psfrag{Xn1}[c][c]{\scalebox{1}{\color[rgb]{0,0,0}\setlength{\tabcolsep}{0pt}\begin{tabular}{c}  \smaller $\tX_{n-1}$\end{tabular}}}%
\psfrag{XN}[c][c]{\scalebox{1}{\color[rgb]{0,0,0}\setlength{\tabcolsep}{0pt}\begin{tabular}{c}   \smaller  $\tX_{N}$\end{tabular}}}%
\psfrag{I1}[c][c]{\scalebox{1}{\color[rgb]{0,0,0}\setlength{\tabcolsep}{0pt}\begin{tabular}{c}   \smaller $I_1$\end{tabular}}}%
\psfrag{In1}[c][c]{\scalebox{1}{\color[rgb]{0,0,0}\setlength{\tabcolsep}{0pt}\begin{tabular}{c}   \smaller $I_{n-1}$\end{tabular}}}%
\psfrag{Im1}[c][c]{\scalebox{1}{\color[rgb]{0,0,0}\setlength{\tabcolsep}{0pt}\begin{tabular}{c}  \smaller  $I_{m+1}$\end{tabular}}}%
\psfrag{In}[c][c]{\scalebox{1}{\color[rgb]{0,0,0}\setlength{\tabcolsep}{0pt}\begin{tabular}{c}  \smaller  $I_n$\end{tabular}}}%
\psfrag{Im}[c][c]{\scalebox{1}{\color[rgb]{0,0,0}\setlength{\tabcolsep}{0pt}\begin{tabular}{c}  \smaller  $I_{m}$\end{tabular}}}%
\psfrag{IN}[c][c]{\scalebox{1}{\color[rgb]{0,0,0}\setlength{\tabcolsep}{0pt}\begin{tabular}{c}  \smaller  $I_{N}$\end{tabular}}}%
\psfrag{R1}[c][c]{\scalebox{1}{\color[rgb]{0,0,0}\setlength{\tabcolsep}{0pt}\begin{tabular}{c}  \smaller $R_1$\end{tabular}}}%
\psfrag{R2}[c][c]{\scalebox{1}{\color[rgb]{0,0,0}\setlength{\tabcolsep}{0pt}\begin{tabular}{c}  \smaller $R_2$\end{tabular}}}%
\psfrag{Rn2}[c][c]{\scalebox{1}{\color[rgb]{0,0,0}\setlength{\tabcolsep}{0pt}\begin{tabular}{c}  \smaller $R_{N-2}$\end{tabular}}}%
\psfrag{Rn1}[c][c]{\scalebox{1}{\color[rgb]{0,0,0}\setlength{\tabcolsep}{0pt}\begin{tabular}{c}  \smaller $R_{N-1}$\end{tabular}}}%
\psfrag{...}[c][c]{\scalebox{1}{\color[rgb]{0,0,0}\setlength{\tabcolsep}{0pt}\begin{tabular}{c}  \smaller\smaller $\cdots$\end{tabular}}}%
\includegraphics[width=.7\linewidth, trim = 0.0cm 0cm 0cm 0cm,clip=true]{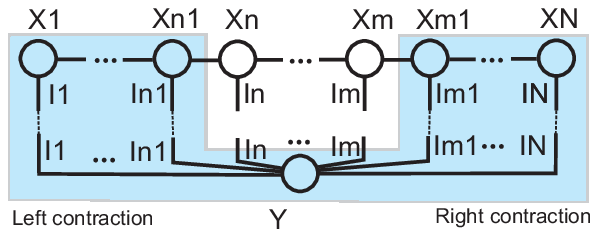}%\label{fig_Tnm}
\vspace{-.8ex}
\caption{Graphical illustration of a tensor contraction of a TT-tensor $\tX  = \tX_1 \bullet \tX_2 \bullet \cdots \bullet \tX_{N-1} \bullet \tX_N$ and a tensor $\tY$ over all modes but the modes-($n,n+1,\ldots,m$). 
The contraction is performed inside the shaded area, and yields a tensor $\tT_{n:m}$ of size $R_{n-1} \times I_n \times I_{n+1} \times \cdots \times I_m \times R_{m}$.}
\label{fig_Tnm}
\end{figure}

\vspace{-1em}
\subsection{The Objective Function and Generalized Framework for The Alternating  Multicore Update Algorithm}\label{sec:al_ncores}

%As in the previous section for order-3 tensors, we consider two TT-approximation problems $\tY \approx \tX$
%\begin{itemize}
%\item {\bf The TT-approximation with a given TT-rank.}
%
%For this problem, we perform a minimisation of the Frobenius norm of the error, in the form 
%\be
%	\min D = \|\tY  - \tX\|_F^2 \, \label{equ_cost_TT}.
%\ee
%\item {\bf The TT-approximation with a given approximation accuracy.}
%
%For the noisy case or when the TT-rank is not specified, we resort to solving a denoising problem, that is, 
%\be
%	\|\tY  - \tX\|_F^2  \le  \varepsilon^2\, \label{equ_cost_TT_noisecontraint},
%\ee
%such that the TT-rank of $\tX$ is minimum. In (\ref{equ_cost_TT_noisecontraint}), $\varepsilon^2$ represents the noise level, or an approximation accuracy.
%\end{itemize}

We now proceed to simplify the two optimisation problems considered to those for sub TT-tensors which comprise a single core or a few consecutive core tensors.
For this purpose, we assume that the TT-tensor $\tX$ is left-orthogonalised up to $\tX_n$ and right-orthogonalized up to $\tX_m$, where $m$ can take one of the values $n, n+1$ or $n+2$.

Let $\tX_{n:m} = \tX_n \bullet \tX_{n+1} \bullet \cdots \bullet \tX_m$, then following Lemma~\ref{lem_Frob_norm}, the error cost function in (\ref{equ_cost_TT}) and in (\ref{equ_cost_TT_noisecontraint}) can be written as %\\[-1.4em]
\be
D &=& \|\tY\|_F^2 +  \|\tX\|_F^2  - 2 \langle \tY, \tX\rangle \notag\\
&=&  \|\tY\|_F^2 + \|\tX_{n:m} \|_F^2  - 2  \langle \tT_{n:m}, \tX_{n:m}\rangle \notag\\
&=&  \|\tY\|_F^2 - \|\tT_{n:m} \|_F^2  + \| \tT_{n:m} - \tX_{n:m} \|_F^2    \label{eq_cost_Gnm}
\ee %\\[-1.4em]
where $\tT_{n:m}$ is of size $R_{n-1} \times I_n \times \cdots \times I_m \times R_{m}$, and represents a tensor contraction between $\tY$ and $\tX$ along all modes but the modes-$(n,n+1,\ldots, m)$, i.e., left contraction along the first $(n-1)$-modes and right contraction along the last $(N-m)$-modes, expressed as
\be
\tT_{n:m} = (\tX_{<n} \, \ltimesx_{n-1} \tY )  \, \rtimesx_{N-m}   \, \tX_{>m}\,, \quad \text{for} \;\; n = 1, 2, \ldots\label{equ_compute_Tnm}
\ee
Fig.~\ref{fig_Tnm} illustrates the computation of the contracted tensor $\tT_{n:m}$.
The objective function in (\ref{eq_cost_Gnm}) indicates that the sub TT-tensor $\tX_{n:m}$ is the best approximation to $\tT_{n:m}$ in both problems (\ref{equ_cost_TT}) and (\ref{equ_cost_TT_noisecontraint}).
Following on this, we can sequentially update $(m-n+1)$ core tensors $\tX_{n},\tX_{n+1}, \ldots, \tX_{m}$, while fixing the other cores $\tX_j$, for $j<n$  or $j>m$. %, as summarised in Algorithm~\ref{alg_TT_dcoreupdate}.
Since the cost function in (\ref{eq_cost_Gnm}) is formulated with orthogonality conditions on $\tX_j$, the new estimates $\tX_n, \tX_{n+1}, \ldots, \tX_m$ need to be orthogonalised accordingly in order to proceed to the next update.
Therefore, the algorithm should update the core tensors following the left-to-right order, i.e., increasing $n$, then switching to the right-to-left update procedure, i.e., decreasing $n$.

More specifically, in a single core update, for which $m = n$, the algorithm sequentially updates first the core tensors
$\tX_1, \tX_2, \ldots, \tX_{N-1}$, and then $\tX_N, \tX_{N-1}, \ldots, \tX_2$.
%\be
%&&\tX_1, \tX_2, \ldots, \tX_{N-1}, \notag \\
%&&\tX_N, \tX_{N-1}, \ldots, \tX_2,\notag \\
%&&\tX_1, \tX_2, \ldots  \notag
%\ee
%

When $m = n+1$, the update can be with overlapping core indices, e.g.,
$(\tX_1, \tX_2)$, $(\tX_2, \tX_3)$, \ldots, as in the density matrix renormalization group (DMRG) optimization scheme \cite{White1993}.
This method sequentially optimises (reduces) ranks on the two sides of the core tensors, i.e., $R_1$, $R_2$, \ldots, $R_N$.
 When the tensor is of a relatively high order, say 20, the first core tensors tend to become small quickly in the first few iterations, while the ranks of the last core tensors remain relatively high. 
For such cases, updating ranks on only one side of the core tensors is recommended. 
For example, the update $(\tX_1, \tX_2)$, $(\tX_3, \tX_4)$ adjusts the ranks on the left side of $\tX_2$ and $\tX_4$.
Ranks on the right side of $\tX_2$ and $\tX_4$, i.e., $R_2$ and $R_4$, will be optimised when the algorithm runs the right-to-left update procedure, 
e.g., $(\tX_4, \tX_5)$, $(\tX_2, \tX_3)$.
Although the ranks $R_2$ and $R_4$ are not optimised in the left-to-right update, 
they are indeed not fixed, but adjusted due to the left-orthogonalization of $\tX_2$ and $\tX_4$.
Example~\ref{ex_TT_denoising_signal_2} compares the performance of the proposed algorithm over different numbers of overlapping core indices.

%\be
%&&(\tX_1, \tX_2), (\tX_3, \tX_4), \ldots, (\tX_{N-3}, \tX_{N-2}), \notag\\
%&&(\tX_{N-1}, \tX_{N}), (\tX_{N-2}, \tX_{N-1}), \ldots,  (\tX_2, \tX_3), \notag \\
%&&(\tX_1, \tX_2) \,. \ldots \notag
%\ee
%and for tensors of odd order $N$ in the following order 
%\be
%&&(\tX_1, \tX_2), (\tX_3, \tX_4), \ldots, (\tX_{N-2}, \tX_{N-1}), \notag\\
%&&(\tX_{N-1}, \tX_{N}), (\tX_{N-3}, \tX_{N-2}), \ldots,  (\tX_2, \tX_3), \notag \\
%&&(\tX_1, \tX_2), \ldots \notag
%\ee

We also show that this update process is important in order to reduce computational costs in a progressive computation of the contracted tensors $\tT_{n:m}$, while for the particular cases of $m = n, n+1$ and $n+2$, we can derive efficient update rules for the core tensors $\tX_n, \tX_{n+1}, \ldots, \tX_m$.

\setlength{\textfloatsep}{.8em}% Remove \textfloatsep
\setlength{\intextsep}{0pt} 
 \setlength{\algomargin}{.7em}
\begin{algorithm}[Ht!]
%\SetAlgoInsideSkip 
\SetFillComment
\SetSideCommentRight
\CommentSty{\footnotesize}
%\scriptsize
\caption{{\tt{The Alternating Multi-Cores Update Algorithm (AMCU)}}\label{alg_TT_progcomputing}}
\DontPrintSemicolon \SetFillComment \SetSideCommentRight
\KwIn{Data tensor $\tY$:  $(I_1 \times I_2 \times \cdots \times I_N)$,  and rank-$(R_1, R_2, \ldots, R_{N-1})$ or approximation accuracy $\varepsilon^2$,
\linebreak $k$ : the number of core tensors to be updated per iteration
$1 \le s \le k$ where $(k-s)$ indicates the number of overlapping core indices,
and $\widetilde{N}$ is the index of the first core to be updated in the right-to-left update
} 
\KwOut{TT-tensor $\tX = \tX_1 \bullet \tX_2 \bullet \cdots \bullet \tX_N$ of rank-$(R_1, R_2, \ldots, R_{N-1})$ such that $\min  \|\tY - \tX\|_F^2$ (or $\|\tY - \tX\|_F^2 \le \varepsilon^2$)}
\SetKwFunction{mreshape}{reshape}
\SetKwFunction{svds}{svds} 
\SetKwFunction{reshape}{reshape} 
\SetKwFunction{leftortho}{Left\_Orthogonalize} 
\SetKwFunction{rightortho}{Right\_Orthogonalize} 
\SetKwFunction{ttcpd}{TT\_CPD} 
\Begin{
\nl Initialize $\tX = \tX_1 \bullet \tX_2 \bullet \cdots \bullet \tX_N$, e.g, by rounding $\tY$\;
\Repeat{a stopping criterion is met}{
\mtcc{{\color{blue}{Left-to-Right update\dashrule}}}
\For{$n = 1, s+1, 2s+1, \ldots$}{
\mtcc{{\color{blue}{Tensor contraction in (\ref{equ_compute_Tnm})\dashrule}}}
%\nl $	\tL_{n} = \tX_{n-1} \ltimesx_{2} \tL_{n-1}$\tcc*{$\tL_1 = \tY$}
\nl $\tT_{n:m} =  \tL_n \,\ltimesx_{N-m} \,  \tX_{>m}$\tcc*{$m = n+k-1$,$\tL_1 = \tY$}
\mtcc{{\color{blue}{Best TT-approximation to $\tT_{n:m}$\dashrule}}}
\nl $[\tX_n, \ldots, \tX_m]  =  \bestTT(\tT_{n:m})$\; 
\For{$i = n, n+1,\ldots, n+s-1$}{
\nl $\tX = \leftortho(\tX,i)$\;
\mtcc{{\color{blue}{Update left-side contracted tensor\dashrule}}}
\nl	$\tL_{i+1} =   \tX_i \, \ltimesx_2 \,  \tL_{i}$\label{step_update_Ln}\;%\tcc*{{\color{red}{Update left-side contracted tensor}}}
	}
}
\mtcc{{\color{blue}{Right-to-Left update\dashrule}}}
\For{$n = \widetilde{N}, \widetilde{N}-s,\widetilde{N}-2s, \ldots$}{
\nl $\tT_{n:m} =  \tL_n \,\ltimesx_{N-m} \,  \tX_{>m}$\;%\tcc*{{\color{red}{Contraction along all modes but mode-$(n:m)$,$m = n+k-1$}}}
\nl $[\tX_n, \ldots, \tX_m]  =  \bestTT(\tT_{n:m})$\; 
\For{$i = m, m-1,\ldots, m-s+1$}{
\nl $\tX = \rightortho(\tX,i)$
}
}
}
}
%\BlankLine
%\minitab[l]{\\[-1em]\hline
%$\widetilde{N}$ is index of the first core to be updated in the right-to-left update,\\
%$1 \le s \le k$ and $(k-s)$ is number of overlapping indices}
\end{algorithm} 
\subsection{A Progressive Computation of Contracted Tensors $\tT_{n:m}$}

The computation of the contracted tensors $\tT_{n:m}$ in (\ref{equ_compute_Tnm}), for $n = 1, 2, \ldots$, is the most computationally expensive step in Algorithm~\ref{alg_TT_progcomputing}, which requires 
$
\calO\left(\displaystyle\sum_{k = 1}^{n-1} R_{k-1}R_k\prod_{j = k}^{N} I_j \right)  
$ operations
for the left contraction $\tL_n  = \tX_{<n} \, \ltimesx_{n-1} \tY$, and 
$
\calO\left(\displaystyle R_n \sum_{k = m+1}^{N} R_{k-1}R_{k}\prod_{j = n}^{k} I_j \right)$ operations
for the right contraction $\tT_{n:m}  = \tL_{n } \, \rtimesx_{N-m}   \, \tX_{>m}$.  
For a particular case of $I_n = I$ and $R_n = R$ for all $n$, the computational cost to compute $\tT_n$ is of order $\calO(RI^{N} + R^2 I^{N-1})$. 

Since the left contraction $\tL_{n}$ can be expressed from $\tL_{n-1}$ as\\[-1.3em]
\begin{equation}
	\tL_{n} = \tX_{<n} \, \ltimesx_{n-1} \tY = \tX_{n-1} \ltimesx_{2} \tL_{n-1},		\notag 
\end{equation}\\[-1.6em]
where $\tL_1 = \tY$, the contracted tensors $\tT_{n:m}$ can be computed efficiently through a progressive computation of $\tL_n$.
Similarly, $\tT_{n:m}$ can also be computed through the right contracted tensors as
$\tT_{n:m}  =   \tX_{<n} \, \ltimesx_{n-1} \, \tR_m$, 
where $\tR_{m}  =  \tY  \, \rtimesx_{N-m}  \, \tX_{>m}  =   \tR_{m+1} \rtimesx_{2} \tX_{m+1}$.
In the left-to-right update procedure, the contracted tensors $\tT_{n:m}$ are computed from the left-side contracted tensors $\tL_n$. 
The tensors $\tL_{n+1}$, \ldots, $\tL_{n+s-1}$ for the next update are then computed sequentially from $\tL_n$ as in Step~\ref{step_update_Ln} in Algorithm~\ref{alg_TT_progcomputing}.
Here, $1\le s \le k$ while $(k-s)$ represents the number of overlapping core indices. 
When the algorithm is in the right-to-left update procedure, the left-side contracted tensors $\tL_n$ are available, and do not need to be computed.

%We can implement a similar procedure to exploit the right contracted tensors $\tR_{m}$. However, the algorithm should first proceed the right-to-left update procedure, then switch to the left-to-right update order.
A similar procedure can be implemented to exploit the right contracted tensors $\tR_{m}$ by first executing the right-to-left update procedure, then switching to the left-to-right update order.

This computation method is adapted from the alternating linear scheme \cite{Holtz-TT-2012,2013arXiv1301.6068D} or the two-site DMRG algorithm\cite{White1993,KressnerEIG2014} for solving linear systems or eigenvalue decompositions in which all variables are in the TT-format.
The alternating multi-cores update algorithm (AMCU) is briefly described in Algorithm~\ref{alg_TT_progcomputing}.
%Algorithm~\ref{alg_TT_progcomputing} shows an implementation of the alternating multicores update algorithm. 
The routine $\bestTT$ within AMCU in Step~3 computes the best TT-approximation to $\tT_{n:m}$, which can be a low-rank matrix approximation or the low-multilinear rank Tucker-2 decomposition, depending on whether $m=n+1$ or $m=n+2$.
In general, the choice of it is free, but when $m=n$ (single core updates) the challenge becomes to find a rank-adaptive procedure for the denoising problem, as  discussed in the next section. The alternating double- and triple- cores update algorithms are presented in the Appendix.
\subsection{An Alternating Single Core Update (ASCU)}\label{sec:ascu_1}

% \setlength{\algomargin}{1em}
%\begin{algorithm}[Ht!]
%%\SetAlgoInsideSkip 
%\SetFillComment
%\SetSideCommentRight
%\CommentSty{\footnotesize}
%%\scriptsize
%\caption{{\tt{Alternating One Core Update Algorithm}}\label{alg_TT_1coreupdate}}
%\DontPrintSemicolon \SetFillComment \SetSideCommentRight
%\KwIn{Data tensor $\tY$:  $(I_1 \times I_2 \times \cdots \times I_N)$,  and rank-$(R_1, R_2, \ldots, R_{N+1})$
%} 
%\KwOut{TT-tensor $\tX = \tG_1 \bullet \tG_2 \bullet \cdots \bullet \tG_N$ of rank-$(R_1, R_2, \ldots, R_{N+1})$} \SetKwFunction{mreshape}{reshape}
%\SetKwFunction{svds}{svds} 
%\SetKwFunction{reshape}{reshape} 
%\SetKwFunction{leftortho}{Left\_Orthogonalize} 
%\SetKwFunction{rightortho}{Right\_Orthogonalize} 
%\SetKwFunction{ttcpd}{TT\_CPD} 
%\Begin{
%\nl Initialize $\tX = \tG_1 \bullet \tG_2 \bullet \cdots \bullet \tG_N$ by rounding $\tY$\;
%\Repeat{a stopping criterion is met}{
%\For(\tcc*{Left-to-Right update}\vspace{-2em}){$n = 1,2 , \ldots, N-1$}{
%\nl $\tG_n =  \langle \tY,   \tX \rangle_{-n}$\tcc*{Tensor contraction along all modes but mode-$n$}
%\nl $\tX = \leftortho(\tX,n)$\tcc*{Left orthogonalization to mode $n$ using Algorithm~\ref{alg_TT_left_orthogonalization}}
%}
%\For (\tcc*{Right-to-Left update}\vspace{-2em}){$n = N, N-1, \ldots, 2$}
%{
%\nl $\tG_n =  \langle \tY,   \tX \rangle_{-n}$\tcc*{Tensor contraction along all modes but mode-$n$}
%\nl $\tX = \rightortho(\tX,n)$\tcc*{Right orthogonalization to mode $n$ using Algorithm~\ref{alg_TT_right_orthogonalization}}
%}
%}
%}
%\end{algorithm} 

We consider a simple case of the AMCU algorithm when $m = n$. The contracted tensor $\tT_n$ is then of size $R_{n-1} \times I_n \times R_{n}$,
and the error function in (\ref{eq_cost_Gnm}) becomes \\[-1.3em]
\be
D  =   \|\tY\|_F^2 - \|\tT_{n} \|_F^2  + \| \tT_{n} - \tX_{n} \|_F^2    \quad \text{for} \;\; n = 1, 2, \ldots, N.\label{eq_cost_Gn}
\ee
We can process the TT decomposition in two different ways

\begin{enumerate}[leftmargin=*,align=left,  itemindent=\dimexpr\labelsep+\labelwidth+7pt\relax]
\item {\bf A TT-approximation with a specified rank.} For this approximation problem, we obtain a solution $\tX_n = \tT_n$. 
\item {\bf A TT decomposition at a prescribed accuracy.} For the denoising problem, a new estimate of $\tX_n$ should have minimum ranks $R_{n-1}$ and $R_{n}$, such that %\\[-1.5em]
\be
\| \tT_{n} - \tX_{n} \|_F^2    \le  \varepsilon_n^2\, \label{equ_Tn_Xn}
\ee%\\[-1.3em]
where $\varepsilon_n^2  = \varepsilon^2 - \|\tY\|_F^2 + \|\tT_{n} \|_F^2 $ is assumed to be non-negative. Note that adjusting the ranks $R_{n-1}$ and $R_n$ also requires manipulating  $\tX_{n-1}$ and $\tX_{n+1}$ accordingly, and $\tT_n$ implicitly depends on these manipulations. 
If a negative accuracy $\varepsilon_n^2$ occurs, this indicates that either the rank $R_{n-1}$ or $R_n$ is quite small, and needs to be increased, that is, the core $\tX_{n-1}$ or $\tX_{n+1}$ should be adjusted to have higher ranks. Often, the TT-rank $R_n$ is set to sufficiently high values, and then the TT-ranks $R_n$ will gradually decrease or at least behave in a non-increasing manner during the update of the core tensors. 
\end{enumerate}
%
%This implies a very simple algorithm to update the core tensors $\tX_n$ one by one sequentially. 
%At each iteration, we only need to compute the contraction between the data tensor $\tY$ and $(N-1)$ core tensors $\tX_k$, where $k \neq n$, then a new estimate of $\tX_n$ is the contracted tensor $\tT_n$. 
%

It is not straightforward to update $\tX_n$ in the above problem; however, by expressing $\tX_n$ as a TT-tensor of three cores (\ref{eq_tucker2}), %\\[-1.6em]
\be
	\tX_n = \bA_n \bullet \tilde{\tX}_n \bullet \bB_n		\notag 
\ee%\\[-1.6em]
the denoising problem in (\ref{equ_Tn_Xn}) reduces to finding a TT-tensor  $\bA_n \bullet \tilde{\tX}_{n} \bullet \bB_n$ which approximates $\tT_n$ with a minimum TT-rank-$(\tilde{R}_{n-1}, \tilde{R}_{n})$, such that %\\[-1.5em]
\be
\| \tT_{n} - \bA_n \bullet \tilde{\tX}_{n} \bullet \bB_n \|_F^2    \le  \varepsilon_n^2  \, \notag \label{equ_Tn_Xn_2},
\ee%\\[-1.5em]
where $\bA_n$ and $\bB_n$ are matrices of size $R_{n-1} \times \tilde{R}_{n-1}$ and $\tilde{R}_{n} \times R_{n}$.

The TT-tensor $\bA_n \bullet \tX_{n} \bullet \bB_n $ can be estimated using the Tucker-2 decomposition in Section~\ref{sec:tt_order3}.
We note that the new estimate of $\tX$ is still of order-$N$ because the two cores $\bA_n$ and $\bB_n$ can be embedded into $\tX_{n-1}$ and $\tX_{n+1}$ as
\begin{align}
\tX %&= \tX_1 \bullet \cdots \bullet \tX_{n-1}  \bullet (\bA_n \bullet \tX_n  \bullet \bB_n ) \bullet \bX_{n+1} \bullet \cdots \bullet \bX_N \notag\\
&= \tX_1 \bullet \cdots \bullet (\tX_{n-1}  \bullet \bA_n)  \bullet \tX_n    \bullet (\bB_n \bullet \bX_{n+1}) \bullet \cdots \bullet \bX_N \notag \,.
\end{align}% \\[-1.5em]
In this way, the three cores $\tX_{n-1}$, $\tX_{n}$ and $\tX_{n+1}$ are  updated. Because $\bA_n$ and $\bB_n^T$ are orthogonal matrices, the newly adjusted cores $\tX_{n-1} \bullet \bA_n$ and $\bB_n \bullet \bX_{n+1}$ obey the left- and right orthogonality conditions. 
Algorithm~\ref{alg_TT_1coreupdate} outlines the single-core update algorithm based on the Tucker-2 decomposition.

 \setlength{\algomargin}{.7em}% \setlength{\algomargin}{1em}
\begin{algorithm}[Ht!]
%\SetAlgoInsideSkip 
\SetFillComment
\SetSideCommentRight
\CommentSty{\footnotesize}
%\scriptsize
\caption{{\tt{The Alternating Single-Core Update Algorithm {(\smaller two-sides rank adjustment)}}}\label{alg_TT_1coreupdate}}
\DontPrintSemicolon \SetFillComment \SetSideCommentRight
\KwIn{Data tensor $\tY$:  $(I_1 \times I_2 \times \cdots \times I_N)$ and  approximation accuracy $\varepsilon$
} 
\KwOut{TT-tensor $\tX = \tX_1 \bullet \tX_2 \bullet \cdots \bullet \tX_N$ of minimum TT-rank such that $\|\tY - \tX \|_F^2 \le \varepsilon^2$} \SetKwFunction{mreshape}{reshape}
\SetKwFunction{svds}{svds} 
\SetKwFunction{reshape}{reshape} 
\SetKwFunction{leftortho}{Left\_Orthogonalize} 
\SetKwFunction{rightortho}{Right\_Orthogonalize} 
\SetKwFunction{ttcpd}{TT\_CPD} 
\Begin{
\nl Initialize $\tX = \tX_1 \bullet \tX_2 \bullet \cdots \bullet \tX_N$\;
\Repeat{a stopping criterion is met}{
\mtcc{{\color{blue}{Left-to-Right update\dashrule}}}
\For{$n = 1,2 , \ldots, N-1$}{
\nl $\tT_{n} =    \tL_n   \rtimes_{N-n} \tX_{>n}$\;%\tcc*{}
\mtcc{{\color{blue}Solve Tucker-2 decomposition\dashrule}}
\nl $\|\tT_n - \bA_n  \bullet  \tX_n \bullet \bB_n\|_F^2  \le \varepsilon^2 - \|\tY\|_F^2 + \|\tT_n\|_F^2$\;
\mtcc{{\color{blue}Adjust adjacent cores\dashrule}}
\nl $\tX_{n-1} \leftarrow \tX_{n-1} \bullet \bA_n$,  $\tX_{n+1} \leftarrow  \bB_n\bullet \tX_{n+1}$\;%\tcc*{}
\nl $\tX = \leftortho(\tX,n)$\;%\tcc*{Left orthogonalization to mode $n$}
\mtcc{{\color{blue}Update left-side contracted tensors\dashrule}}
\nl $\tL_{n} \leftarrow \bA_n^T \bullet \tL_{n}$, $\tL_{n+1} \leftarrow \tX_n   \ltimes_2  \tL_{n}$%\tcc*{Update left-side contracted tensor}
}
\mtcc{{\color{blue}{Right-to-Left update\dashrule}}}
\For{$n = N,N-1,\ldots, 2$}{
\nl $\tT_{n} =    \tL_n   \rtimes_{N-n} \tX_{>n}$\;%\tcc*{}
\nl $\|\tT_n - \bA_n  \bullet  \tX_n \bullet \bB_n\|_F^2  \le \varepsilon^2 - \|\tY\|_F^2 + \|\tT_n\|_F^2$\;
\nl $\tX_{n-1} \leftarrow \tX_{n-1} \bullet \bA_n$,  $\tX_{n+1} \leftarrow  \bB_n\bullet \tX_{n+1}$\;%\tcc*{}
\nl $\tX = \rightortho(\tX,n)$%\tcc*{Right orthogonalization to mode $n$}
}
}
}
\end{algorithm}

%Alternatively, instead of adjusting two ranks $R_{n-1}$ and $R_{n}$ of $\tX_n$, we can update only one rank, either $R_{n-1}$ or $R_{n}$,
%, corresponding to the right-to-left or left-to-right update order procedure. 
%Assuming that core tensors are updated in the left-to-right
%order, 

Alternatively, instead of adjusting the two ranks, $R_{n-1}$ and $R_{n}$, of $\tX_n$, we can update only one rank, either $R_{n-1}$ or $R_{n}$, corresponding to the right-to-left or left-to-right update order procedure. 
%When the algorithm runs the left-to-right update, we update the rank $R_{n}$ and fix the rank $R_{n-1}$. The rank $R_{n-1}$ is updated when the right-to-left update procedure is executed. This can be achieved by running one iteration of the Tucker-2 decomposition.
Assuming that the core tensors are updated in the left-to-right order, we need to find $\tX_n$ which has minimum rank-$R_{n}$ and satisfies  %\\[-1.7em]
\be
	\| \tT_{n}  - \tX_{n} \bullet \bB_n \|_F^2 \le  \varepsilon_n^2.  \notag \label{eq_adjust_right_rank}
\ee% \\[-1.7em]
This problem reduces to the truncated SVD of the mode-(1,2) matricization of $\tT_{n}$ with an accuracy  $\varepsilon_n^2$, that is % \\[-1.6em]
\be
	[\tT_{n}]_{(1,2)} \approx  \bU_n \, \bSigma \, \bV_n^T  \,, \notag \label{eq_factorize_Tn}
\ee %\\[-1.6em]
where $\bSigma = \diag(\sigma_{n,1}, \ldots, \sigma_{n,R_{n}^{\star}})$.
Here, for the new optimized rank $R_{n}^{\star}$, the following holds% \\[-1.3em]
\be
\sum_{r = 1}^{R_{n}^{\star}}  \, \sigma_{n,r}^2 \ge \|\tY\|_F^2 - \varepsilon^2 > \sum_{r = 1}^{R_{n}^{\star} -1}  \, \sigma_{n,r}^2\, . \label{eq_rank_Rn}
\ee %\\[-1.1em]
The core tensor $\tX_n$ is then updated by reshaping $\bU_n$ to an order-3 tensor of size $R_{n-1} \times I_n \times R_{n}^{\star}$, while 
the core $\tX_{n+1}$ needs to be adjusted accordingly as %\\[-1.6em]
\be
	\tX_{n+1}^{\star} =  \bSigma\, \bV_n^T \bullet 	\tX_{n+1}\, . \label{eq_adjust_xnplus1}
\ee%\\[-1.5em]
When the algorithm updates the core tensors in the right-to-left order, we update $\tX_n$ by using the $R_{n-1}^{\star}$ leading right singular vectors of the mode-1 matricization of $\tT_n$, and adjust the core $\tX_{n-1}$ accordingly, that is, \\[-1.6em]
\be
	[\tT_{n}]_{(1)} &\approx&  \bU_n \, \bSigma \, \bV_n^T  \, \notag\\[-.3em]
	\tX_n^{\star}  &=& \reshape(\bV_n^T, [R_{n-1}^{\star}, I_n , R_{n}])   \notag\\
	\tX_{n-1}^{\star}  &=& \tX_{n-1} \bullet \bU_n \bSigma\,. \label{eq_udpate_xnminus1}
\ee \\[-1.6em]
%where $ \bSigma = \diag(\sigma_{n,1}, \ldots, \sigma_{n,R_{n-1}^{\star}})$.
To summarise, the proposed method updates one core and adjusts (or rotates) another core. Hence, it updates two cores at a time.
 The new estimate $\tX_n^{\star}$ satisfies the left- or right-orthogonality condition, and does not need to be orthogonalised again. %Therefore, Steps 4 and 8 in Algorithm~\ref{alg_TT_progcomputing} can be omitted. 
 The algorithm is listed in Algorithm~\ref{alg_TT_1coreupdate_b}.
 Another observation is that the tensor $\tX_{n+1}$ or  $\tX_{n-1}$ will be updated in the next iteration after updating $\tX_n$. Hence, the update of $\tX_{n+1}$ in (\ref{eq_adjust_xnplus1}), i.e., in Step 5, and  the update of $\tX_{n-1}$ in (\ref{eq_udpate_xnminus1}) , i.e., in Step 9, can be even skipped, except for the last update.

 \setlength{\algomargin}{.7em}% \setlength{\algomargin}{1em}
\begin{algorithm}[Ht!]
%\SetAlgoInsideSkip 
\SetFillComment
\SetSideCommentRight
\CommentSty{\footnotesize}
%\scriptsize
\caption{{\tt{The Alternating Single-Core Update Algorithm {(\smaller one side rank adjustment)}}}\label{alg_TT_1coreupdate_b}}
\DontPrintSemicolon \SetFillComment \SetSideCommentRight
\KwIn{Data tensor $\tY$:  $(I_1 \times I_2 \times \cdots \times I_N)$ and  accuracy $\varepsilon$
} 
\KwOut{TT-tensor $\tX = \tX_1 \bullet \tX_2 \bullet \cdots \bullet \tX_N$ of minimum TT-rank such that $\|\tY - \tX \|_F^2 \le \varepsilon^2$} \SetKwFunction{mreshape}{reshape}
\SetKwFunction{svds}{svds} 
\SetKwFunction{reshape}{reshape} 
\SetKwFunction{leftortho}{Left\_Orthogonalize} 
\SetKwFunction{rightortho}{Right\_Orthogonalize} 
\SetKwFunction{ttcpd}{TT\_CPD} 
\Begin{
\nl $\tX = \tX_1 \bullet \tX_2 \bullet \cdots \bullet \tX_N$ by rounding $\tY$\;
\Repeat{a stopping criterion is met}{
\mtcc{{\color{blue}{Left-to-Right update\dashrule}}}
\For{$n = 1,2 , \ldots, N-1$}{
\nl $\tT_{n} =    \tL_n   \rtimes_{N-n} \tX_{>n}$\;%\tcc*{Tensor contraction along all modes but mode-$n$}
  \nl  $[\tT_n]_{(1,2)} \approx \bU \, \bSigma \, \bV^T$\;
  \nl $\tX_n = \reshape(\bU, R_{n-1} \times I_n \times R_{n})$\;
  \mtcc{{\color{blue}{Adjust adjacent cores\dashrule}}}
\nl   $\tX_{n+1} \leftarrow  (\bSigma \, \bV^T) \bullet \tX_{n+1}$\;%\tcc*{Adjust adjacent cores}
  \mtcc{{\color{blue}{Update left-side contracted tensor\dashrule}}}
\nl $\tL_{n+1} \leftarrow \tX_n   \ltimes_2  \tL_{n}$%\tcc*{Update left-side contracted tensor}
}
\mtcc{{\color{blue}{Right-to-Left update\dashrule}}}
\For{$n = N,N-1,\ldots, 2$}{
\nl $\tT_{n} =    \tL_n   \rtimes_{N-n} \tX_{>n}$\;%\tcc*{Tensor contraction along all modes but mode-$n$}
    \nl  $[\tT_n]_{(1)} \approx \bU \, \bSigma \, \bV^T$\;
   \nl $\tX_{n}  = \reshape(\bV^T, R_{n-1} \times I_n \times R_{n})$\;
    \nl $\tX_{n-1} \leftarrow \tX_{n-1} \bullet (\bU \, \bSigma)$\;
}
}
}
\end{algorithm}

\subsection{TT-SVD as a variant of ASCU with one update round}\label{sec:ttsvd_ascu}

%For simplicity we consider a tensor with identical dimensions, $I_1 = I_2 = \ldots = I_N = I$. 
Consider the approximation of a tensor $\tY$ of size $I_1 \times I_2 \times \cdots \times I_N$ using the ASCU algorithm with one-side rank adjustment at a given accuracy $\varepsilon^2$. Horizontal slices of the core tensors $\tX_n$ are initialized by unit vectors $\ve_r$ of length $I_n R_{n}$, as
$\vtr{\bX_n(r,:,:)} = \ve_r$, 
for $r = 1, 2, \ldots, R_{n-1}$, where the ranks $R_n$ are set to $R_n = \prod_{k = n+1}^{N}I_k$. 
For this initialization, the mode-1 matricizations of  the core tensors are identity matrices, $[\tX_n]_{(1)} = \bI_{R_{n-1}}$. Therefore, the contracted tensor $\tT_1$ is the data $\tY$, and the mode-1 approximation error is simply the global approximation error
$\varepsilon_1^2 = \varepsilon^2$.
For this reason, ASCU estimates the first core tensor $\tX_1$ as in TT-SVD. 

Since the core tensors $\tX_3$, \ldots, $\tX_N$ are not updated, the contracted tensor $\tT_2$ is the projection of $\tY$ onto the subspace spanned by $\tX_1$, implying that ASCU estimates $\tX_2$ in a similar way as TT-SVD. The difference here is that the mode-2 approximation accuracy $\varepsilon_2^2$ in ASCU is affected by the term $\|\tY\|_F^2 - \|\tT_2\|_F^2$.

The remaining core tensors $\tX_3$, \ldots, $\tX_N$ are updated similarly by ASCU and TT-SVD, but again the approximation accuracies in the two algorithms are different.
Another major difference is that TT-SVD estimates the core tensors once, i.e., by running only the left-to-right update (or the right-to-left update), while ASCU runs the right-to-left update after it completes the first round left-to-right update, and so on. This gradually either improves the approximation error or reduces the TT-rank of the estimated tensor. 

To summarise, the TT-SVD acts as an ASCU with one update round, but with a different error tolerance. As a result, ASCU yields a lower approximation error or smaller TT-ranks.
 
% \vspace{-1ex}
 \subsection{Comparison between the AMCU algorithms}\label{sec:compare_amcu}

Table~\ref{tab_compare_amcu} summarises the sub-optimisation problems of the ASCU, the alternating double-cores update (ADCU), and triple-cores update (ATCU) algorithms. In general, the ASCU with one-side rank adjustment (ASCU$_1$) works as the ADCU with one overlapping core index (ADCU$_1$), while the ASCU with two-sides rank adjustment (ASCU$_2$) updates the cores similarly to the updates of the ATCU with two overlapping core indices (ATCU$_2$). When the TT-rank is fixed, the ADCU with non-overlapping core indices (ADCU$_0$) is two times faster than the (ASCU$_1$), while 
ATCU$_0$ is faster than ADCU$_0$. However, the difference is significant only when the number of cores is large, i.e. tensors are of relatively high orders. More comparisons are  provided in Section~\ref{sec::examples}.
 
\begin{table}[t!]
\caption{Comparison of sub-optimisation problems per iteration between the AMCU algorithms. The ADCU$_k$ or ATCU$_k$ denotes the ADCU or ATCU algorithm with $k$ overlapping core indices, whereas ASCU$_k$ denotes the ASCU algorithm with $k$-sides rank adjustment.}
\label{tab_compare_amcu}
\begin{tabular*}{1\linewidth}{@{\extracolsep{\fill}}l@{\hspace{1ex}}l@{}l@{}}
AMCU &   Sub optimisation problems &  {Update order of core tensors}  \\\hline
{ASCU$_1$} &  \minitab[p{.5\linewidth}]{Low-rank matrix approximation to $\tT_n$}   &   \minitab[p{.45\linewidth}]{$\tX_n$, $\tX_{n+1}$   (work as ADCU$_{1}$)} \\\hline
{ASCU$_2$} &  \minitab[p{.5\linewidth}]{Low multilinear-rank Tucker-2 approximation to $\tT_n$}   &  \minitab[p{.45\linewidth}]{$\tX_{n-1}$ and  $\tX_n$ \hfill\\   (work as ATCU$_{2}$)} \\\hline
{ADCU }  &   \minitab[p{.5\linewidth}]{Low-rank matrix approximation to $\tT_{n,n+1}$}   &    \minitab[p{.45\linewidth}]{$\tX_n$ and $\tX_{n+1}$  
%For fixed rank, ADCU$_{0}$ is 2 times faster than ASCU$_{1}$
}   
\\\hline
{ATCU } &   \minitab[p{.5\linewidth}]{Low multilinear-rank Tucker-2 approximation to $\tT_{n,n+1,n+2}$}   &    \minitab[p{.45\linewidth}]{$\tX_{n}$, $\tX_{n+1}$ and  $\tX_{n+2}$
%. ATCU$_{0}$ should be faster than ADCU$_{0}$.
}  \\\hline
\end{tabular*}
\end{table}

% \vspace{-1ex}

\section{An Alternating Multi-Cores Update Algorithm for Input Tensor in TT-format}\label{sec:prior_compression}

Consider a data tensor $\tY$ given in the TT-tensor format,
which can be obtained by prior compression of data with higher accuracy tolerance using the TT-SVD algorithm. 
When tensors are given in the TT format, our alternating algorithms can be implemented with a much cheaper computational cost due to the efficient tensor contraction between two tensors $\tY$ and $\tX$. In other words, we assume that 
$\tY = \tY_1 \bullet \tY_2 \bullet \cdots \bullet \tY_N$, where  $\tY_n$ are of size $S_{n-1} \times I_n \times S_{n}$. We shall next  introduce fast contractions between two TT-tensors, followed by a formulation of  update rules for the AMCU algorithm.

% \vspace{-1.4ex}
\subsection{The contraction between TT-tensors}

%%Let $\bn = [n, n+1, \ldots, n+k-1]$ denote a vector of $k$ consecutive indices, which can be a scalar or consists of two or three consecutive indices. 
%From (\ref{eq_cost_Gnm}), consider again the cost function of the TT-decomposition which is rewritten as
%\be
%D% &=& \| \tY - \tX \|_F^2    \notag\\
%&=&  \|\tY\|_F^2 - \|\tT_{n:m} \|_F^2  + \| \tT_{n:m} - \tX_{n:m} \|_F^2    %\label{eq_cost_Gn}
%\ee
%where $\tX_{n:m} = \tX_n \bullet \cdots \bullet \tX_{m}$ can be  a core tensor $\tX_n$ for a scalar $n$, or products of two or there consecutive core tensors $\tX_n \bullet \tX_{n+1}$, $\tX_n \bullet \tX_{n+1} \bullet \tX_{n+2}$ when $m = n+1$ or $n+2$, respectively.
As previously stated, the most computationally expensive step in the AMCU algorithms is to compute the contraction tensors $\tT_{n:m}$.
For two TT-tensors $\tY$ and $\tX$,  we then have \\[-1.5em]
\be
\tT_{n:m} &=&  ( \tX_{<n}  \ltimesx_{n-1}  \tY)  \rtimesx_{N-m} \tX_{>m} \, \notag \\
&=&   (\tX_{<n} \ltimesx_{n-1} \tY_{<n})  \bullet  \tY_{n:m}  \bullet  (\tY_{>m}  \rtimesx_{N-m} \tX_{>m}) \, \notag \\
&=& \bPhi_{n} \bullet  \tY_{n:m}  \bullet  \bPsi_{m} \notag \label{equ_Tn_TT}
\ee\\[-1.5em]
where the matrices $\bPhi_{n}$ are of size $R_{n-1} \times S_{n-1}$, and represent a left-contraction between $\tX_{<n}$ and $\tY_{<n}$ along the first ($n-1$) modes, and the matrices $\bPsi_{n}$ are of size $S_{n} \times R_{n}$, and represent a right contraction between $\tY_{>n}$ and $\tX_{>n}$ along all but mode-1, \\[-1.3em]
\be
\bPhi_{n} &=  \tX_{<n} \, \ltimesx_{n-1} \,  \tY_{<n}     , \quad 
\bPsi_{n} =  \tY_{>n} \, \rtimesx_{N-n}  \, \tX_{>n}   \, . \notag 
\ee \\[-1.3em]
The contraction matrices $\bPhi_{n}$ and $\bPsi_{n}$ can be efficiently computed  as \\[-1.5em]
\be
\bPhi_{n+1}  &=& ( \tX_{<n} \bullet \tX_{n}) \ltimesx_{n-1} (\tY_{<n} \bullet \tY_{n})   \notag \\
&=&   \tX_{n} \ltimesx_2  (\bPhi_{n} \bullet \tY_{n} ) 	\, ,	\notag 
\\
\bPsi_{n-1}  &=& ( \tY_{n} \bullet \tY_{>n}) \rtimesx_{N-n} (\tX_{n} \bullet \tX_{>n})  \notag \\
&=&   (\tY_{n} \bullet  \bPsi_{n})  \rtimesx_2   \tX_{n}      \, .\notag 
\ee
with the respective complexities of $\calO(I_n R_{n-1} S_{n} (R_{n} + S_{n-1}))$
and $\calO(I_n R_{n} S_{n-1} (S_{n} + R_{n-1}))$,

%\vspace{-1em}

\subsection{A Generalized Framework for the AMCU algorithm}\label{sec::ASCU_tt_tensor}

Similar to the alternating multi-core update in Algorithm~\ref{alg_TT_progcomputing}, the algorithm for the TT-tensor $\tY$ is summarised in Algorithm~\ref{alg_TT_dcoreupdate_2}.
It is important to emphasise that we do not update the right and left contraction matrices $\bPhi_n$ and $\bPsi_n$ when updating the core tensors, 
but update either $\bPhi_{n+1}$ or $\bPsi_{n-1}$.
In order to achieve this, we first compute the right contraction matrices  $\bPsi_{n}$ before entering the main loop. Here, we denote $\bPsi_N = \bPhi_1= 1$.
At the first iteration, the algorithm executes the left-to-right update procedure, and estimates $\tX_{1:k}$ 
as the best TT approximation to the tensor $\tY_{1:k} \bullet \bPsi_{k} $. The core tensors are then orthogonalized, and 
the left-contraction matrices $\bPhi_{2}, \bPhi_{3},\ldots, \bPhi_{s}$ are updated accordingly. Similarly, the algorithm computes the new core tensors $\tX_n, \tX_{n+1}, \ldots, \tX_{n+s-1}$, left-
orthogonalizes them, then updates the left-contraction matrices $\bPhi_{n}$ without computing the right contraction matrices $\bPsi_{n}$. 
%Updating $\tX_n$ and $\bPhi_n$ have the same cost.

While running the right-to-left update, the algorithm does not need to compute the left-contraction matrices but it updates the right-contraction matrices $\bPsi_{n-1}$, \ldots,  $\bPsi_{n+k-2}$.

\setlength{\intextsep}{.1ex}
\begin{figure*}[t!]
\centering
%
%\psfrag{Intensitive}[cc][cc]{\scalebox{1}{\color[rgb]{0,0,0}\setlength{\tabcolsep}{0pt}\begin{tabular}{c}\smaller\smaller\smaller\smaller \vspace{1em}Intensity\end{tabular}}}%
%%
%\hfill
%\subfigure[Relative Error as function of iterations]{\includegraphics[width=.45\textwidth, trim = 0.0cm 0cm 0cm 0cm,clip=true]{fig_tt_algs_exp_I14_dx18_err_iter}
%\label{fig_tt_algs_exp_I14_dx18_err_iter}}
\psfrag{Relative Error}[cc][cc]{\scalebox{1}{\color[rgb]{0,0,0}\setlength{\tabcolsep}{0pt}\begin{tabular}{c}\smaller\smaller\smaller \vspace{1em}Relative Error\end{tabular}}}
\psfrag{A2CU}[lb][lb]{\scalebox{.9}{\color[rgb]{0,0,0}\setlength{\tabcolsep}{0pt}\begin{tabular}{c}\smaller\smaller\smaller\smaller\smaller\tiny ADCU\end{tabular}}}
\subfigure[Relative Errors as a function of iterations]{\includegraphics[width=.32\linewidth,trim = 0.0cm 0cm -.20cm .0cm,clip=false]{fig_ex1_error_vs_iters}
\label{fig_ex1_error_vs_iters}}
%
%
%\\
\psfrag{MSAE}[cc][cc]{\scalebox{1}{\color[rgb]{0,0,0}\setlength{\tabcolsep}{0pt}\begin{tabular}{c}\smaller\smaller\smaller \vspace{1ex}MSAE (dB)\end{tabular}}}%
\subfigure[Mean squared angular errors]{\includegraphics[width=.30\linewidth, trim = 0cm 0.0cm 0cm 0cm,clip=false]{fig_mc_ex1_sae}
\label{fig_mc_ex1_sae}}
\hfill
%\subfigure[Relative error]{\includegraphics[width=.47\textwidth, height = .35\linewidth, trim = 0.0cm 0cm 0cm 0cm,clip=true]{fig_tt_algs_exp_I14_dx18_err}\label{fig_tt_algs_exp_I14_dx18_err}}
%\psfrag{Approximation Error}[cc][cc]{\scalebox{1}{\color[rgb]{0,0,0}\setlength{\tabcolsep}{0pt}\begin{tabular}{c}\smaller\smaller\smaller\smaller \vspace{1em}Relative Error\end{tabular}}}%
%\subfigure[Relative error]{\includegraphics[width=.49\linewidth, trim = 0cm 0.0cm 0cm 0cm,clip=true]{fig_mc_ex1_rel_errors}\label{fig_mc_ex1_rel_errors}}
%%
%
%
\psfrag{SVD1}[rt][rt]{\scalebox{.8}{\color[rgb]{0,0,0}\setlength{\tabcolsep}{0pt}\begin{tabular}{r} \footnotesize TT-SVD\\[-.5em] \footnotesize+Rounding\end{tabular}}}
\psfrag{SVD2}[lt][lt]{\scalebox{.8}{\color[rgb]{0,0,0}\setlength{\tabcolsep}{0pt}\begin{tabular}{l}\\[-.7em]\smaller\smaller\smaller \footnotesize TT-SVD\\[-.5em] \footnotesize given rank\end{tabular}}}
%
%\subfigure[Execution time]{\includegraphics[width=.34\linewidth, trim = 1cm 0.0cm 1.6cm 0cm,clip=true]{fig_ex1c_len16777216_SNR-20_exectime}\label{fig_ex1c_len16777216_SNR-20_exectime}}
%
\subfigure[Execution time]{\includegraphics[width=.34\linewidth, trim = 1cm 0.0cm 1.6cm 0cm,clip=true]{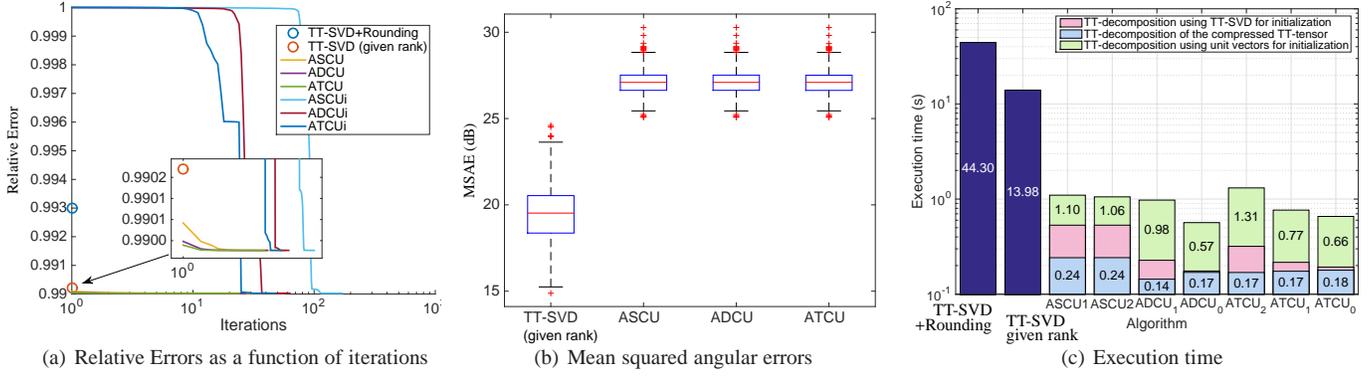}\label{fig_ex1c_len16777216_SNR-20_exectime}}
\vspace{-1ex}
\caption{Denoising of signals at poor SNR.
\subref{fig_ex1_error_vs_iters} Illustration of the convergence behaviour of the AMCU algorithms when the signal length $K = 2^{22}$. A closer inspection shows that the AMCU algorithms achieve lower approximation errors than TT-SVD. \subref{fig_mc_ex1_sae} mean squared angular errors over 500 independent runs when $K = 2^{22}$;  \subref{fig_ex1c_len16777216_SNR-20_exectime} Execution time in seconds when the signal length $K = 2^{24}$.}
\label{fig_ex_TT_reconstruct_exp}
\vspace{-1ex}
\end{figure*}

 \setlength{\algomargin}{.7em}% \setlength{\algomargin}{1em}
\begin{algorithm}[Ht!]
%\SetAlgoInsideSkip 
\SetFillComment
\SetSideCommentRight
\CommentSty{\footnotesize}
%\scriptsize
\caption{{\tt{The Alternating Multi-Cores Update Algorithm for TT-tensor}}\label{alg_TT_dcoreupdate_2}}
\DontPrintSemicolon \SetFillComment \SetSideCommentRight
\KwIn{TT-tensor $\tY = \tY_1 \bullet \tY_2 \bullet \cdots \bullet \tY_N$, and  approximation accuracy $\varepsilon$, $\widetilde{N}$ is the index of the first core to be updated in the right-to-left update, 
 $k$ is the number of core tensors to be updated per iteration, and 
$1 \le s \le k$ where $(k-s)$ indicates the number of overlapping indices,

} 
\KwOut{TT-tensor $\tX = \tX_1 \bullet \tX_2 \bullet \cdots \bullet \tX_N$ such that $\|\tY - \tX \|_F^2  \le \varepsilon$ with lower TT-ranks} \SetKwFunction{mreshape}{reshape}
\SetKwFunction{svds}{svds} 
\SetKwFunction{reshape}{reshape} 
\SetKwFunction{leftortho}{Left\_Orthogonalize} 
\SetKwFunction{rightortho}{Right\_Orthogonalize} 
\SetKwFunction{bestTT}{bestTT\_approx} 
\Begin{
\nl Initialize $\tX = \tX_1 \bullet \tX_2 \bullet \cdots \bullet \tX_N$ by rounding $\tY$\;
\mtcc{{\color{blue}{Precompute the right-contracted matrices $\bPsi_n$}\dashrule}}
\For{$n = N-1, \ldots, 1$}{
\nl $    \bPsi_{n} = (  \tY_{n+1} \bullet  \bPsi_{n+1})  \rtimesx_{2}   \tX_{n+1}      $\tcc*{$\bPsi_{N} = 1$}}
\Repeat{a stopping criterion is met}{
\mtcc{{\color{blue}{Left-to-Right update}\dashrule}}
\For{$n = 1, s+1, 2s+1, \ldots$}{
\mtcc{{\color{blue}{Contracted tensor $\tT_{n:m}$}, $m = n+k-1$\dashrule}}
\nl $\tT_{n:m} = \bPhi_{n}  \bullet  \tY_n \bullet \tY_{n+1} \bullet \cdots \bullet \tY_m \bullet \bPsi_{m}$\;
\mtcc{{\color{blue}{Best TT-approximation to $\tT_{n:m}$}\dashrule}}
\nl $[\tX_n, \ldots, \tX_m] = \bestTT(\tT_{n:m},\varepsilon)$\;
\For{$i = n, n+1,\ldots, n+s-1$}{
\nl $\tX = \leftortho(\tX,i)$ 
\nl $\bPhi_{i+1} \leftarrow \tX_{i}  \ltimesx_2  (\bPhi_{i} \bullet \tY_{i})$   %\tcc*{Update left contraction $\bPhi_{i+1}$}
}
}
\mtcc{{\color{blue}{Right-to-Left update}\dashrule}}
\For{$n = \widetilde{N}, \widetilde{N}-s,\widetilde{N}-2s, \ldots$}{
\nl $\tT_{n:m} = \bPhi_{n}  \bullet  \tY_n \bullet \tY_{n+1} \bullet \cdots \bullet \tY_m \bullet \bPsi_{m}$\;
\nl $[\tX_n, \ldots, \tX_m] = \bestTT(\tT_{n:m},\varepsilon)$\;
\For{$i = m, m-1,\ldots, m-s+1$}{
\nl $\tX = \rightortho(\tX,i)$
\nl $ \bPsi_{i-1} \leftarrow (  \tY_{i} \bullet  \bPsi_{i})   \rtimesx_2 \tX_{i}$  %\tcc*{Update right contraction $\bPsi_{i-1}$}
}
}
}
}
%\BlankLine
%\minitab[l]{\\[-2em]\hline
%$\widetilde{N}$ is index of the first core to be updated in the right-to-left update,\\
%$1 \le s \le k$ and $(k-s)$ is number of overlapping indices}
\end{algorithm} 

For $k = 1$, the single core update algorithm updates $\tX_n$ as in Section~\ref{sec:ascu_1}. For $k = 2$, the alternating double cores update (ADCU) algorithm computes a low rank approximation to the mode-(1,2) unfolding of the contracted tensor\\[-1.5em]
\be
\tT_{n,n+1} = \bPhi_{n} \bullet  \tY_{n} \bullet \tY_{n+1} \bullet  \bPsi_{n+1} \, ,  \notag 
\ee\\[-1.5em] or in other words, a truncated SVD of the following matrix\\[-1.5em]
\be
[\tT_{n,n+1}]_{(1,2)} =   [\bPhi_{n} \bullet  \tY_{n}]_{(1,2)} \,    [\tY_{n+1} \bullet  \bPsi_{n+1} ]_{(1)}  \,   \notag 
\ee\\[-1.5em]
where $[\bPhi_{n} \bullet  \tY_{n}]_{(1,2)}$ and $[\tY_{n+1} \bullet  \bPsi_{n+1} ]_{(1)} $ are respective of sizes $R_{n-1} I_n \times S_{n}$ and $S_{n} \times I_{n+1} R_{n+1}$. 
When $S_{n} <  R_{n-1}I_n$ and $S_{n} < I_{n+1} R_{n+1}$, the SVD is computed for a reduced size matrix $\bU_n \bV_n^T$, where $\bU_n$ and $\bV_n$ are the upper triangular matrices in the QR decompositions of $[\bPhi_{n} \bullet  \tY_{n}]_{(1,2)}$ and $ [\tY_{n+1} \bullet  \bPsi_{n+1} ]_{(1)}^T$.

For the alternating triple cores  update algorithm, the tensor contractions are computed for three indices $[n, n+1, n+2]$ as \\[-1.5em]
\be
\tT_{\bn} &=& \bPhi_{n} \bullet  \tY_{n} \bullet \tY_{n+1} \bullet \tY_{n+2} \bullet  \bPsi_{n+2} .		\notag 
\ee \\[-1.5em]
The algorithm solves the Tucker-2 decomposition of the mode-(1,2),3,(4,5) unfolding of $\tT_{\bn}$ as (see (\ref{eq_tucker2}))
\be
\min_{\bU_n, \bV_n}  \|\tZ_n - \bU_n \bullet \tX_{n+1} \bullet \bV_n^T \|_F^2		\notag 
\ee
where $\bU_n = [\tX_n]_{(1,2)}$,  $\bV_n = [\tX_{n+2}]_{(1)}^T$, and\\[-1.5em]
\be
\tZ_n = [\tT_{\bn}]_{(1,2),3,(4,5)} = \bA_n \bullet \tY_{n+1}   \bullet   \bB_n^T		\notag 
\ee\\[-1.5em]
and $\bA_n = [\bPhi_{n} \bullet  \tY_{n} ]_{(1,2)}$ are of size $R_{n-1}I_n \times S_{n}$, 
while 
$\bB_n = [\tY_{n+2} \bullet  \bPsi_{n+2}]_{(2,3)}$ are of size $I_{n+2} R_{n+2} \times S_{n+1}$.
The two factor matrices, $\bU_n$ and $\bV_n$, are sequentially estimated as principal components of the matrices 
$(\tZ \bullet \bV_n) \rtimesx_2 (\tZ \bullet \bV_n)$ and the matrices $(\bU_n^T \bullet \tZ)   \ltimesx_2 (\bU_n^T \bullet \tZ)$.

%The TT-ATCU algorithm is implemented as in Algorithm~\ref{alg_TT_3coresupdate}, but left- and right- contraction matrices are updated as in Algorithm~\ref{alg_TT_2coresupdate_tt}.

%\vspace{-1ex}
\section{Simulations}\label{sec::examples}

%\begin{example}{\bf Decomposition of random data}\label{ex_TT_random}
%
%In this first example, our aim is to compare the alternating core update algorithms with the truncation or rounding algorithm in decomposition of random tensors of size $5 \times 5 \times 5 \times 5 \times 5 \times 5$. The algorithms fit the data at different accuracy-controlling factor $\varepsilon$ in a range of [0.1, 0.5]. The parameter $\varepsilon$ is used to determine ranks $R_n$ of core tensors
%\be	
%	\|\tY  - \hat{\tY} \|_F^2 \le  {\varepsilon} \, \sigma_{\max}^2.
%\ee
% %\begin{table}
%%\begin{tabular}
%%
%%0.10 1.00 5.00 22.00 60.08 21.92 5.00 1.00 0.23 0.22 
%%0.20 1.00 4.00 15.00 36.42 17.33 4.00 1.00 0.64 0.56 
%%0.30 1.00 4.00 13.00 26.42 13.75 4.00 1.00 0.75 0.69
%%0.40 1.00 3.00 8.75 16.00 9.92 3.00 1.00 0.90 0.85 
%%0.50 1.00 3.00 7.00 11.00 7.00 2.25 1.00 0.95 0.91 
%%0.60 1.00 2.00 4.00 6.00 4.08 2.00 1.00 0.98 0.96 
%%\end{tabular}
%%\end{table}
%%
%Comparison of relative errors obtained by two TT decomposition algorithms are plotted in Fig.~\ref{fig_TT_random}, indicate that AnCU improves approximation errors of the TT-decomposition.
%
%\begin{figure}[t!]
%\centering
%%
%%\subfigure[Source and observed noisy signal at SNR = -20 dB]
%{\includegraphics[width=.45\textwidth, trim = 0.0cm -.30cm 0cm 0cm,clip=true]{res_rand_tt_errs_vs_eps_N6_I5}
%\label{res_rand_tt_errs_vs_eps_N6_I5}}
%%
%\hfill
%\caption{Illustration for Example~\ref{ex_TT_random}.}
%\label{fig_TT_random}
%\end{figure}
%
%
%
%
%
%\end{example}

We first validated the proposed algorithms through two examples on the denoising of exponentially decaying signals which admit the TT representation.
Second, our method was tested on the denoising of benchmark color images.
 For this application, a novel tensorization was developed to construct
order-5 tensors from small patches of the images. The final example considers blind source separation from a single channel mixture.

\begin{example}{\bf The reconstruction of known target ranks.}\label{ex_TT_reconstruction_exp} Harmonic retrieval is at the very core of signal processing applications. To illustrate the potential of the TT decomposition in this context, we considered the reconstruction of an exponentially decaying signal $x(t)$ from a noisy observation $y(t)$ of $K = 2^{d}$ samples, where $d$ = 22, 24 or 26, given by\\[-1.5em]
\be
y(t) &=&   x(t)+ e(t)		\notag 
\ee\\[-1.7em]
and \\[-1.7em]
\be
x(t) = \exp({\frac{-5t}{K}}) \, \sin(\frac{2\pi f}{f_s} t  + \frac{\pi}{3}) 	\label{eq_x5}%\notag 
\ee
with $f  = 10$ Hz, $f_s = 100$ Hz, while $e(t)$ represents the additive Gaussian noise, which was randomly generated such that the signal noise ratio SNR = -20 dB.
%At such high noise level, the signal $x(t)$ cannot be observed in $y(t)$ as illustrated in Fig.~\ref{fig_tt_algs_exp_I14_dx18_signals}.

The observed signal was tensorized (reshaped) to an order-($d-2$) tensor $\tY$ of size $4 \times 2 \times \cdots \times 2 \times 4$. With this tensorization, the sinusoid  yields a TT-tensor of rank-(2,2,\ldots,2), whereas the signal $exp(t)$ yields a rank-1 tensor. Hence, its Hadamard product, i.e., $x(t)$, admits a TT-representation of rank-$(2,2,\ldots, 2)$\cite{OseledetsTT11}, and gives the TT-model\\[-1.5em]
\be
\tY = \tX + \tE \, ,	\notag 
\ee\\[-1.5em]
where $\tX$ is the TT-tensor of the signal $x(t)$, and $\tE$ is reshaped from the noise.
In other words, we attempted to approximate the tensor $\tY$ by a TT-tensor with a prior known TT-rank.
%, which admits a reconstruction of the exponentially decaying signal $x(t)$.

In order to compare the widely-used TT-SVD algorithm with our proposed AMCU algorithm, the tensor $\tY$ was first decomposed using the TT-SVD such that \\[-1.5em]
\be
\|\tY - \hat{\tX}\|_F^2 \le  \varepsilon^2	\notag 
\ee\\[-1.5em]
where $\varepsilon$  is a measure of the added noise. For this decomposition, the TT-SVD yielded TT-tensors with quite high ranks, which exceeded the TT-rank of $\tX$. The results were then ``rounded'' to the TT-rank of $\tX$ \cite{OseledetsTT09}. For the two-stage decomposition, we used the TT-tensor toolbox \cite{oseledets2014tt}.

Alternatively, to obtain a TT-tensor having the same ranks as $\tX$, the TT-SVD algorithm computed only $R_n =2$ leading singular vectors from the projected data. The outcome TT-tensor was used to initialise the AMCU algorithms.

We ran the simulation 500 times,
%, while the Gaussian noise was randomly generated in each run.
and assessed performance through the relative error and the squared angular error given respectively by\\[-1.7em]
\be
\delta(\by, \hat{\bx}) &=& \frac{\|\by - \hat{\bx}\|_2^2}{\|\by\|_2^2} \,, \notag \\[-1ex]
SAE(\bx,\hat{\bx}) &=& -20\log_{10}\arccos \frac{\bx^T {\hat{\bx}}}{\|\bx\|_2 \|\hat{\bx}\|_2}  \quad (dB).\notag 
\ee\\[-1.3em]
Fig.~\ref{fig_ex1_error_vs_iters} compares the convergence behaviour of the AMCU algorithms over one run when $K = 2^{22}$. 
The TT-SVD with rounding achieved an approximation error of 0.9930, while 
given TT-ranks, it yielded a TT-tensor with a lower approximation error of 0.9902. With this result as the initial value, the AMCU algorithms improved the  approximation error to 0.9900. A similar result was achieved when the AMCU algorithms were initialised by a TT-tensor, the $n$-th core of which is given in the form $\vtr{\hat{\tX}_n(r,:,:)} = \ve_r$.
The AMCU algorithms converged after a dozen iterations for the first initialisation, and required more iterations for the latter initialisation method, denoted by AMCU$_i$. 

%The approximation errors over 500 runs are compared in Fig.~\ref{fig_mc_ex1_rel_errors}, and it confirms that the TT-SVD with Rounding did not yield a sufficiently good approximation to the signal $x(t)$ compared to TT-SVD with the TT-ranks given. A closer inspection of the relative errors of the TT-SVD and ASCU shows that our proposed algorithms achieved a satisfactory performance. 
%
Fig.~\ref{fig_mc_ex1_sae} illustrates a performance comparison in terms of SAEs for the case $K= 2^{22}$, showing on average that the signals reconstructed by our proposed algorithms exhibit an 8 dB higher SAE than when using the TT-SVD with the rank specified. 
For $K = 2^{24}$ and $2^{26}$, the average SAEs of the TT-SVD were improved to 25.70 and 29.07 dB, but were still lower than the respective mean SAEs of 32.56 and 38.17  dB achieved using our algorithms. 

%%% K = 2^26, mac pro
%TT-SVD_r            0.99009     29.070       56.93
%ASCU2s              0.99008     38.172       2.03
%ASCU1s              0.99008     38.172       1.96
%ADCU1               0.99008     38.172       0.92
%ADCU0               0.99008     38.172       0.62
%ATCU2               0.99008     38.172       1.15
%ATCU1               0.99008     38.172       0.73
%ATCU0               0.99008     38.172       1.08
% 
%  COmpression
%ASCU2s              0.99008     38.201       0.64
%ASCU1s              0.99008     38.201       0.65
%ADCU1               0.99008     38.201       0.41
%ADCU0               0.99008     38.202       0.50
%ATCU2               0.99008     38.200       0.51
%ATCU1               0.99008     38.201       0.43
%ATCU0               0.99008     38.200       0.45

\begin{figure}[t]
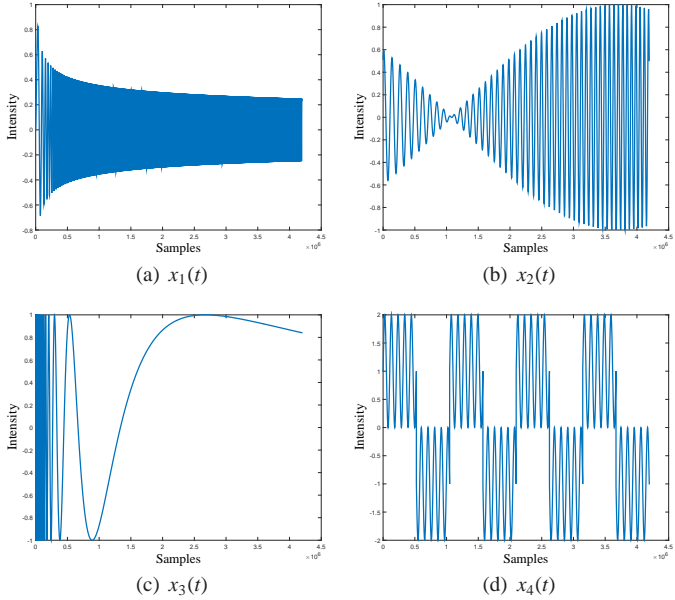

\centering
\psfrag{Intensity}[cc][cc]{\scalebox{1}{\color[rgb]{0,0,0}\setlength{\tabcolsep}{0pt}\begin{tabular}{c}\smaller\smaller\smaller\smaller \vspace{1em}Intensity\end{tabular}}}%
\psfrag{Sammples}[cc][cc]{\scalebox{1}{\color[rgb]{0,0,0}\setlength{\tabcolsep}{0pt}\begin{tabular}{c}\smaller\smaller\smaller\smaller  Samples\end{tabular}}}%
\subfigure[$x_1(t)$]{\includegraphics[width=.47\linewidth, trim = 0cm 0cm 0cm 0cm,clip=true]{fig_tt_denoising_x_ex11.eps}
\label{fig_tt_denoising_x_ex11}}
\hfill
\subfigure[$x_2(t)$]{\includegraphics[width=.47\linewidth, trim = 0cm 0cm 0cm 0cm,clip=true]{fig_tt_denoising_x_ex3}
\label{fig_tt_denoising_x_ex3}}
\hfill
\subfigure[$x_3(t)$]{\includegraphics[width=.47\linewidth, trim = 0cm 0cm 0cm 0cm,clip=true]{fig_tt_denoising_x_ex2}
\label{fig_tt_denoising_x_ex2}}
\hfill
\subfigure[$x_4(t)$]{\includegraphics[width=.47\linewidth, trim = 0cm 0cm 0cm 0cm,clip=true]{fig_tt_denoising_x_ex4}
\label{fig_tt_denoising_x_ex4}}
\caption{Original signals $x(t)$ used in Example~\ref{ex_TT_denoising_signal_2}.}
\label{fig_ex_TT_denoising_signal}
\end{figure}

For completeness, Fig.~\ref{fig_ex1c_len16777216_SNR-20_exectime} compares the execution times of the considered algorithms, where ASCU$_1$ and ASCU$_2$ denote the ASCU algorithms with one and two sides rank adjustment, respectively, while ADCU$_k$ and ATCU$_k$ indicate the ADCU and ATCU algorithms with $k$ overlapping core indices, where $k = 0, 1,  2$. 
When the signal length $K = 2^{24}$, the TT-SVD with rounding took an average execution time of 44.30 seconds on a computer based on Intel Xeon E5-1650, clocked at 3.50 GHz and with 64 GB of main memory.
% a laptop consisting of  an Intel Core i7 clocked at 1.8 GHz with 4 GB of main memory. 
For a given TT-rank, this algorithm worked faster, and completed the approximation in 13.98 seconds. Since the outcomes of TT-SVD were good initial values, the ASCU algorithm needed only 0.53 seconds, while the ADCU and ATCU algorithms were approximately two times faster than the ASCU. Even when the core tensors were initialised by unit vectors $\ve_r$, the proposed algorithms converged very quickly (in less than 2 seconds), i.e., much shorter  than the total execution times when these algorithm were initialised by TT-SVD. For this kind of initialisation, the ATCU was on average the fastest, and ASCU the slowest algorithm.

 %                    TT-SVD   Compression   Unitvector init
 %ASCU2s   0.543682   0.253036    1.796875
%ASCU1s   0.535680   0.253110    1.787999
% ADCU1   0.235198   0.149512    1.277022
% ADCU0   0.180529   0.172092    1.038191
% ATCU2   0.325886   0.177922    2.014134
% ATCU1   0.217433   0.179630    1.186451
% ATCU0   0.204320   0.181561    0.983985

Finally, we illustrate performance of the AMCU algorithms in Algorithm~\ref{alg_TT_dcoreupdate_2} for the task of fitting the TT-tensors $\tY_{\tilde{\varepsilon}}$, which were approximations to $\tY$ with an accuracy of $\tilde{\varepsilon} = 0.3$, using the TT-SVD. The algorithms achieved an average SAE of 26.95 dB when the signal length $K = 2^{22}$ and an SAE of 32.52 dB when $K = 2^{24}$. There was no significant loss in accuracy compared to the AMCU fit to the tensor $\tY$. Moreover, the AMCU algorithm required shorter running times, e.g,  0.24 seconds for the ASCU algorithm, and 0.17 seconds for the ADCU and  ATCU  algorithms.

\end{example}

\begin{table*}[ht!]
\caption{The TT-ranks of signals $x_r$ of length $K = 2^{22}$ and of their estimates $\hat{x}_r$ using the TT-SVD and the AMCU algorithms in Example~\ref{ex_TT_denoising_signal_2}. The squared angular error is given on the logarithmic scale, and the execution time is in seconds.
}\label{tab_ex_2_ranks_}
\centering
\vspace{-1ex}
\begin{tabular}{llrcc}
Signal &  \multicolumn{1}{c}{TT-ranks} & SAE (dB)  & Time (s)\\
\hline
$x_1$ 			& 2-2-3-3-3-3-4-4-5-6-7-8-10-13-19-26-32-16-8-4-2    &  & \\
$\hat{x}_{TT-SVD}$  &2-4-8-16-31-59-112-210-387-677-967-789-443-228-115-58-30-16-8-4-2   	&   4.18  & 9.69\\
$\hat{x}_{ASCU}$     & 1-1-1-1-1-1-1-1-2-2-3-3-6-11-20-34-32-16-8-4-2    & 27.49 & 3.25\\
$\hat{x}_{ADCU_1}$ & 1-1-1-1-1-1-1-2-2-3-5-8-14-28-49-45-24-16-8-4-2   &    26.66  &  2.01 \\
$\hat{x}_{ADCU_0}$ & 1-2-1-2-1-2-1-2-2-4-5-10-13-26-20-40-24-16-8-4-2    & 27.89 & 2.54\\				   
$\hat{x}_{ATCU_2}$  & 1-1-1-1-1-1-1-2-2-3-5-8-14-28-48-22-24-16-8-4-2  &    27.61  & 2.81\\
$\hat{x}_{ATCU_1}$  & 1-1-1-1-1-1-1-2-2-3-5-8-14-26-37-22-24-16-8-4-2  &    27.64  & 2.41\\
$\hat{x}_{ATCU_0}$  & 1-1-1-1-1-1-1-2-4-3-5-10-14-22-33-22-24-16-8-4-2   & 28.18 & 2.62\\
\hline
$x_2$ 		        & 2-4-8-16-32-56-47-38-32-26-22-18-15-13-12-10-8-7-6-4-2   & \\
$\hat{x}_{TT-SVD}$  & 2-4-8-16-31-59-112-210-387-675-959-782-440-226-114-57-28-15-8-4-2   &   6.18  & 9.63\\
$\hat{x}_{ASCU}$     & 1-1-1-1-1-1-1-2-4-8-13-21-35-65-92-54-27-15-8-4-2      & 22.73 & 2.08\\
$\hat{x}_{ADCU_1}$ & 1-1-1-1-1-1-2-3-5-8-12-20-37-71-94-52-26-13-7-4-2   & 23.10    & 1.52\\
$\hat{x}_{ADCU_0}$ & 1-2-1-2-1-2-2-4-4-8-11-22-36-72-85-54-27-15-8-4-2       & 23.34 & 1.45\\
$\hat{x}_{ATCU_2}$ & 1-1-1-1-1-1-2-3-5-8-12-20-37-70-104-56-28-13-7-4-2   & 23.11  & 1.83 \\
$\hat{x}_{ATCU_0}$ & 1-1-1-1-1-1-1-2-4-7-11-22-37-66-105-54-27-15-8-4-2      & 23.07  & 1.67 \\
\hline
$x_3$ & 2-2-2-2-2-3-3-3-3-4-4-4-5-6-7-9-12-16-8-4-2   &   \\
$\hat{x}_{TT-SVD}$  & 2-4-8-16-31-59-112-210-387-677-966-789-443-228-115-58-29-15-8-4-2   & 4.41 & 9.67\\
$\hat{x}_{ASCU}$     & 1-1-1-1-1-1-1-1-1-1-2-2-2-3-4-7-11-13-8-4-2   & 31.48	& 3.10\\ 
$\hat{x}_{ADCU_1}$ & 1-1-1-1-1-1-1-1-1-2-2-3-6-10-18-32-16-8-8-4-2   &32.47	& 2.15\\
$\hat{x}_{ADCU_0}$ & 1-2-1-2-1-2-1-2-1-2-2-4-2-4-6-12-16-16-8-4-2   &34.58	& 2.52\\
$\hat{x}_{ATCU_2}$ & 1-1-1-1-1-1-1-1-1-2-2-3-5-8-14-23-8-8-8-4-2   &33.52	&  2.77\\
$\hat{x}_{ATCU_0}$ & 1-1-2-1-1-2-1-1-2-1-2-3-2-3-6-6-9-16-8-4-2   &31.49	& 2.58\\
\hline
$x_4$ 			& 2-2-2-2-2-3-3-3-3-3-3-3-3-3-3-3-3-2-1-1-1    & \\
$\hat{x}_{TT-SVD}$  & 2-4-8-16-31-59-111-207-378-653-920-762-433-223-112-56-28-14-7-4-2 & 4.36  & 9.69 \\
$\hat{x}_{ASCU}$     & 1-1-1-1-1-1-1-1-1-1-1-2-2-3-3-3-3-2-1-1-1   & 35.88 	& 2.91\\
$\hat{x}_{ADCU_1}$ & 1-1-1-1-1-1-1-1-1-1-2-2-2-3-4-8-15-11-7-4-2   & 35.89	& 2.17\\
$\hat{x}_{ADCU_0}$ & 1-2-1-2-1-2-1-2-1-2-1-2-4-8-13-26-24-16-8-4-2   & 39.35 & 2.50\\
$\hat{x}_{ATCU_0}$ & 1-1-2-1-1-2-1-1-2-1-1-2-2-3-3-3-3-2-1-1-1   & 36.12 	& 2.61\\
\hline
$x_5$ 			& 2-2-2-2-2-2-2-2-2-2-2-2-2-2-2-2-2-2-2-2-2    & \\
$\hat{x}_{TT-SVD}$  &  2-4-8-16-31-59-112-210-387-677-966-788-443-228-115-58-29-15-8-4-2 & 5.17  & 9.69 \\
$\hat{x}_{ASCU}$      & 2-2-2-2-2-2-2-2-2-2-2-2-2-2-2-2-2-2-2-2-2   & 46.04	& 3.21\\
$\hat{x}_{ADCU_1}$  & 2-2-2-2-2-2-2-2-2-2-2-2-2-2-2-2-2-2-2-2-2   & 46.04	& 3.17\\
$\hat{x}_{ATCU_0}$  & 2-2-2-2-2-2-2-2-2-2-2-2-2-2-2-2-2-2-2-2-2   & 46.04 	& 2.76\\
\hline
\end{tabular}
\end{table*}

\begin{example}{\bf Denoising with unknown target ranks.}\label{ex_TT_denoising_signal_2}
To illustrate utility of the TT decomposition as a tool for denoising, we considered noisy signals $y(t) = x(t)  +  e(t)$, degraded versions of a signal $x(t)$ through contamination with additive Gaussian noise $e(t)$, 
where $x(t)$ can take one of the following forms (see Fig.~\ref{fig_ex_TT_denoising_signal})\\[-1.5em]
\begin{align}
x_1(t) &= \frac{\sin(2000 t^{2/3})}{4 t^{1/4}} \, ,  
& x_3(t) &= \sin(\frac{5(t+1)}{2})  \cos(100 (t+1)^2)  ,
 \notag \\
x_2(t) &=  \sin(t^{-1})  \,,  
&x_4(t) &=  \sign(\sin(8\pi t)) (1 + \sin(80\pi t))  ,\notag 
\end{align}
or the damped signal used in Example~\ref{ex_TT_reconstruction_exp}.
The signals $y(t)$ in our example had length $K = 2^{22}$, and were tensorized (reshaped) to tensors $\tY$ of order-22 and size $2 \times 2 \times \cdots \times 2$. 
With this tensorization, the five signals $x_r(t)$ can be well approximated by tensors in the TT-format, with their TT-ranks given in Table~\ref{tab_ex_2_ranks_}, where $x_5(t)$ is the signal in (\ref{eq_x5}).

%\begin{figure}[t]
%%
%\subfigure[$x_1(t)$]{\includegraphics[width=.48\textwidth, trim = 0cm 0cm 0cm 0cm,clip=true]{fig_tt_denoising_sr10B_ex1}
%\label{fig_tt_denoising_sr10B_ex1}}
%%
%\subfigure[$x_2(t)$]{\includegraphics[width=.48\textwidth, trim = 0cm 0cm 0cm 0cm,clip=true]{fig_tt_denoising_sr10B_ex3}
%\label{fig_tt_denoising_sr10B_ex2}}
%%
%\subfigure[$x_3(t)$]{\includegraphics[width=.48\textwidth, trim = 0cm 0cm 0cm 0cm,clip=true]{fig_tt_denoising_sr10B_ex2}
%\label{fig_tt_denoising_sr10B_ex3}}
%%
%\subfigure[$x_4(t)$]{\includegraphics[width=.48\textwidth, trim = 0cm 0cm 0cm 0cm,clip=true]{fig_tt_denoising_sr10B_ex4}
%\label{fig_tt_denoising_sr10B_ex4}}
%%
%\caption{Illustration of the noisy signals in Example~\ref{ex_TT_denoising_signal_2}, and their estimated signals using TT-SVD and ASCU.}
%\label{fig_ex_TT_denoising_est_signal}
%\end{figure}

\begin{figure}[t]
\centering
%\psfrag{Truncation}[lc][lc]{\scalebox{1}{\color[rgb]{0,0,0}\setlength{\tabcolsep}{0pt}\begin{tabular}{l} \vspace{1ex}\smaller\smaller TT-SVD\end{tabular}}}%
%\psfrag{Alternating}[lc][cl]{\scalebox{1}{\color[rgb]{0,0,0}\setlength{\tabcolsep}{0pt}\begin{tabular}{l}\vspace{1ex}  \smaller\smaller AnCU\end{tabular}}}%
%\psfrag{Noisy}[lc][lc]{\scalebox{1}{\color[rgb]{0,0,0}\setlength{\tabcolsep}{0pt}\begin{tabular}{l}\vspace{1ex}  \smaller\smaller $y(t)$\end{tabular}}}%
%\psfrag{Source}[lc][lc]{\scalebox{1}{\color[rgb]{0,0,0}\setlength{\tabcolsep}{0pt}\begin{tabular}{l}\vspace{1ex}  \smaller\smaller $x(t)$\end{tabular}}}%
\psfrag{T}[lc][lc]{\scalebox{1}{\color[rgb]{0,0,0}\setlength{\tabcolsep}{0pt}\begin{tabular}{l} \vspace{1ex}\smaller\smaller TT-SVD\end{tabular}}}%
\psfrag{Alternating}[lc][cl]{\scalebox{1}{\color[rgb]{0,0,0}\setlength{\tabcolsep}{0pt}\begin{tabular}{l}\vspace{1ex}  \smaller\smaller AMCU\end{tabular}}}%
\psfrag{Noisy}[lc][lc]{\scalebox{1}{\color[rgb]{0,0,0}\setlength{\tabcolsep}{0pt}\begin{tabular}{l}\vspace{1ex}  \smaller\smaller $y(t)$\end{tabular}}}%
\psfrag{Source}[lc][lc]{\scalebox{1}{\color[rgb]{0,0,0}\setlength{\tabcolsep}{0pt}\begin{tabular}{l}\vspace{1ex}  \smaller\smaller $x(t)$\end{tabular}}}%
{\includegraphics[width=.45\textwidth, trim = 0cm 0cm 0cm 1.5cm,clip=true]{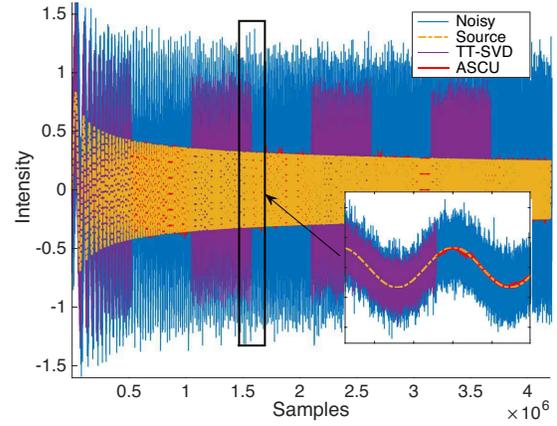}
\label{fig_tt_denoising_sr10B_ex1}}
\vspace{-1ex}
\caption{Illustration of the noisy signal $x_1(t)$ in Example~\ref{ex_TT_denoising_signal_2}, and the signals estimated using the TT-SVD and ASCU.}
\label{fig_ex_TT_denoising_est_signal}
\end{figure}

We applied the alternating single and multi-cores update algorithms to approximate the noisy tensor, with the results plotted in Fig.~\ref{fig_ex_TT_denoising_est_signal} for the test case with SNR = 0 dB. 
Fig.~\ref{fig_ex_TT_denoising_est_signal} plots 8 non overlapping segments of the 
the signals estimated using the TT-SVD and our alternating algorithm.
The approximation $\|\tY - \hat{\tX}\|_F^2 \le \varepsilon^2$ was first performed using the TT-SVD algorithm, where the accuracy level of $\varepsilon^2 = \sigma^2 K$, and $\sigma$ is the standard deviation of the Gaussian noise. The reconstructed signals achieved SAEs of 4.18, 6.18, 4.43, 4.36 and 5.17 dB for the five signals $x_r(t)$, respectively. 
When using the ASCU, ADCU and ATCU algorithms, much better performances were obtained with average respective SAEs =   33.11,  33.49 and  33.23 dB. 
%The performances are also very close to 
%those obtained when initialising the ASCU by exact TT-tensors of the signals $x_r(t)$. 
The performance comparison is presented in Table~\ref{tab_ex_2_ranks_}, where ADCU$_1$ and ADCU$_0$ denote the performances of the ADCU algorithms with one overlapping index and non overlapping indices, respectively.
For example, for the reconstruction of the signal $x_1{(t)}$, ADCU$_1$ enforced the first eight core tensors to be quite small with a rank of 1, and could not suppress the TT-ranks of the last core tensors. %, which were relatively high.
Consequently, the TT-ranks $R_{15}$, $R_{16}$, $R_{17}$ and $R_{18}$ exceeded those of ADCU$_{0}$,
and the TT-tensor estimated by ADCU$_1$ had 11578 entries, which was more than the 6798 entries estimated by ADCU$_{0}$.
Another important observation is that the signal reconstructed by ADCU$_1$ was worse than the reconstruction by ADCU$_{0}$, by about 1dB SAE.

Besides higher angular errors, the TT-SVD yielded approximations with TT-ranks significantly higher than those of the sources. This detrimental effect did not happen for the ASCU algorithm.
For this example, the TT-SVD algorithm took on average 9.67 seconds to estimate all the core tensors of the five tensors $\hat{\tX}_r$, while the ADCU and ATCU algorithms needed  2.24 and   2.37 seconds, respectively, and were slightly faster than ASCU.

\end{example}

\begin{example}{\bf Image denoising.}\label{ex_TT_denoising_image}
We next tested the proposed algorithms in a novel application of the TT-decomposition for image denoising.  Given that the intensities of pixels in a small window are highly correlated, our method was able to learn hidden structures which represent relations between small patches of pixels. 
These structures are then used to reconstruct the image as a whole.

For a color image $\bY$ of size $I  \times J \times 3$ degraded by additive Gaussian noise, the basic idea behind the proposed method is that for each block of pixels of size $h \times w  \times 3$, given by\\[-1.5em]
\be
	\bY_{r,c} = \bY(r:r+h-1,c:c+w-1,:)		\notag 
\ee\\[-1.5em]
a small tensor $\tY_{r,c}$ of size $h \times w \times 3 \times (2d+1) \times (2d+1)$, comprising $(2d+1)^2$ blocks around $\bY_{r,c}$ is constructed, in the form 
\be
	\tY_{r,c}(:,:,:,d+1+i,d+1+j) =   \bY_{r+i,c+j}, 		\notag 
\ee
where $i, j = -d, \ldots, 0 , \ldots, d$,  and $d$ represents the neighbour width.
Every $(r,c)$-block $\bY_{r,c}$ is then approximated through the TT-decomposition 
\be
	\|\tY_{r,c} - \tX_{r,c}\|_F^2 \le \varepsilon^2 \label{equ_block_approx}
\ee
where the noise level $\varepsilon^2$ can be determined by inspecting the coefficients of the image in the high frequency bands.
A pixel is then reconstructed as the average of all its approximations by TT-tensors which cover that pixel. 
%The final reconstructed image is 

\begin{figure}
\includegraphics[width=.158\linewidth, trim = 0.0cm 0cm 0cm 0cm,clip=true]{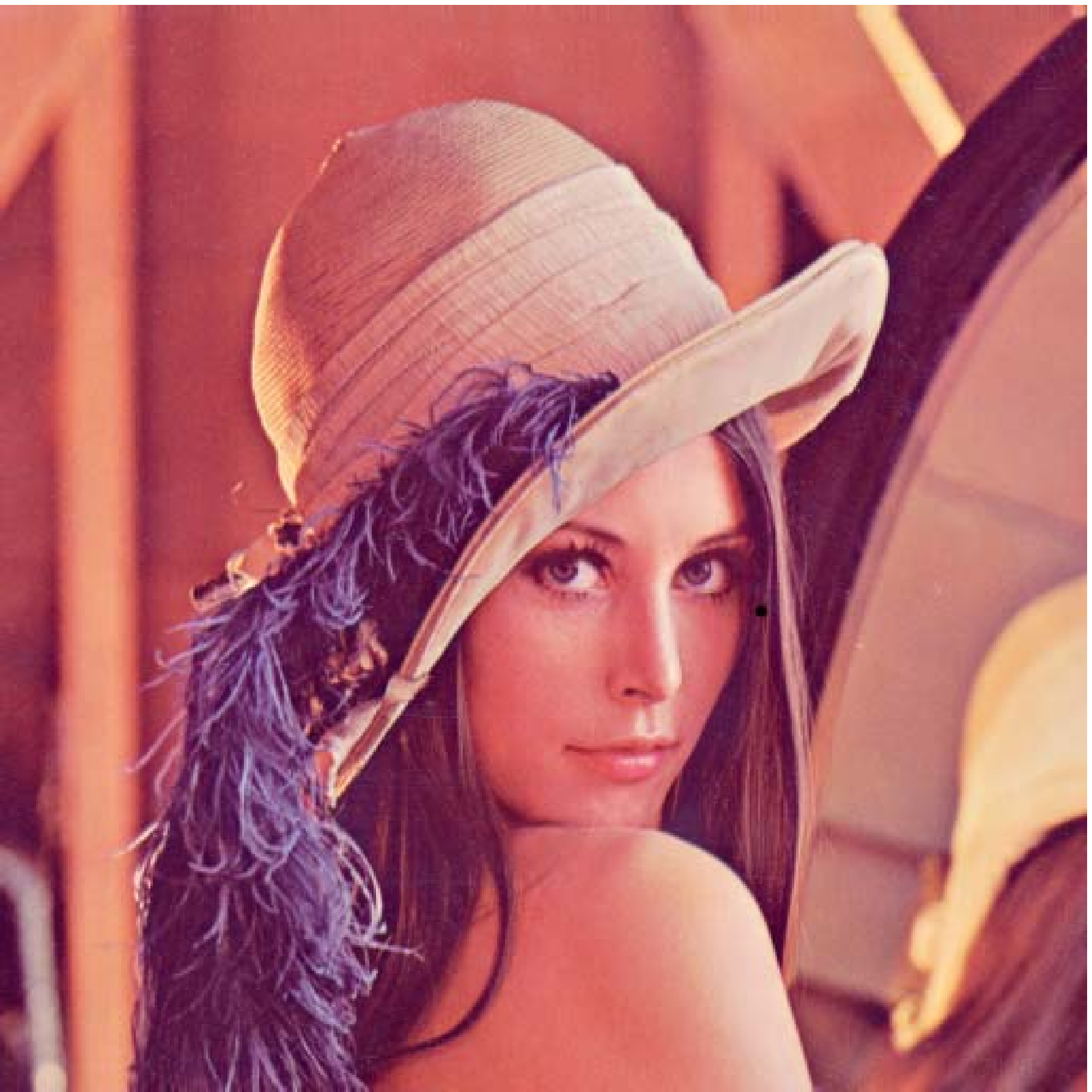}
\includegraphics[width=.158\linewidth, trim = 0.0cm 0cm 0cm 0cm,clip=true]{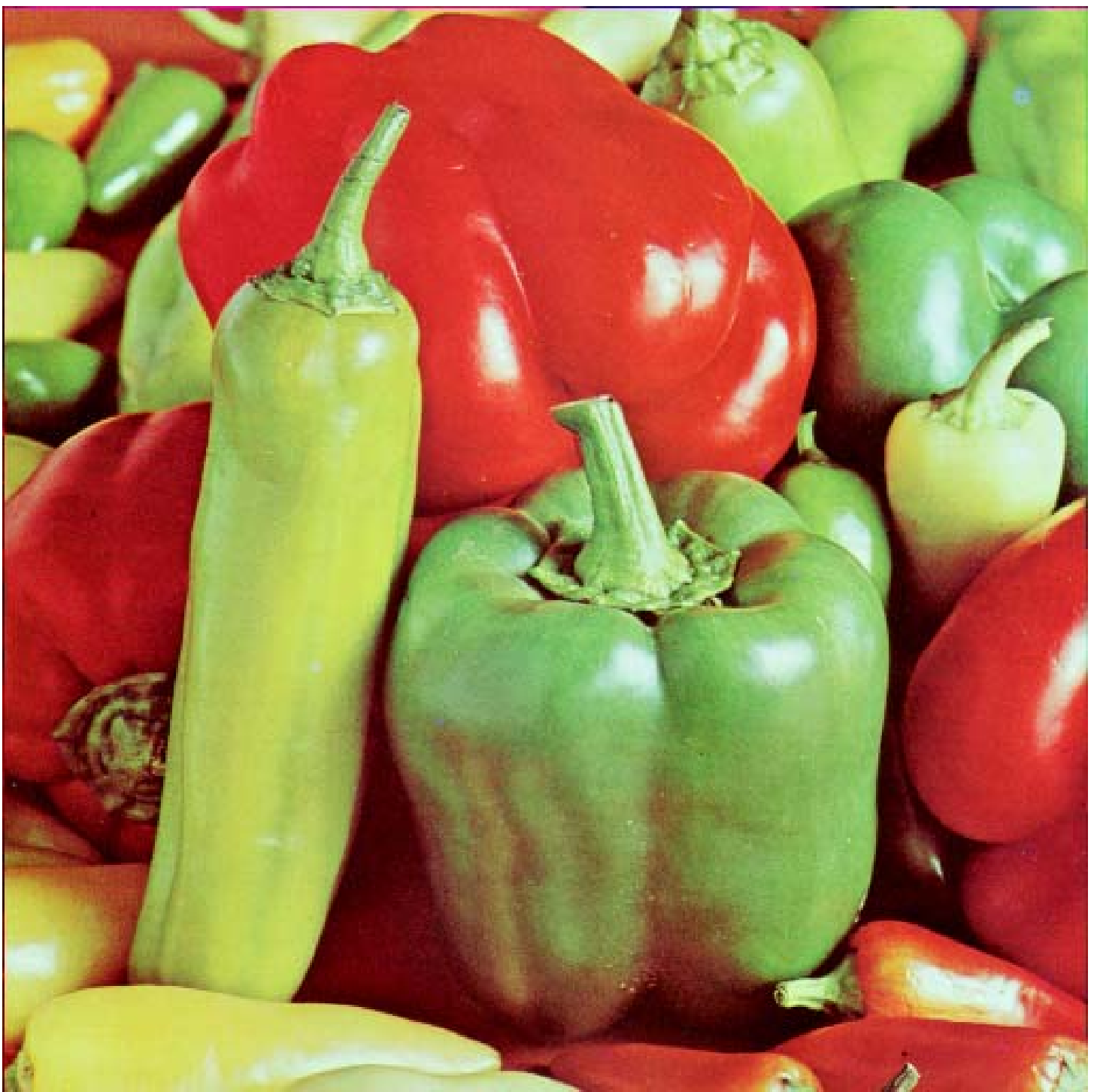}
\includegraphics[width=.158\linewidth,height=.158\linewidth, trim = 0.0cm 0cm 0cm 0cm,clip=true]{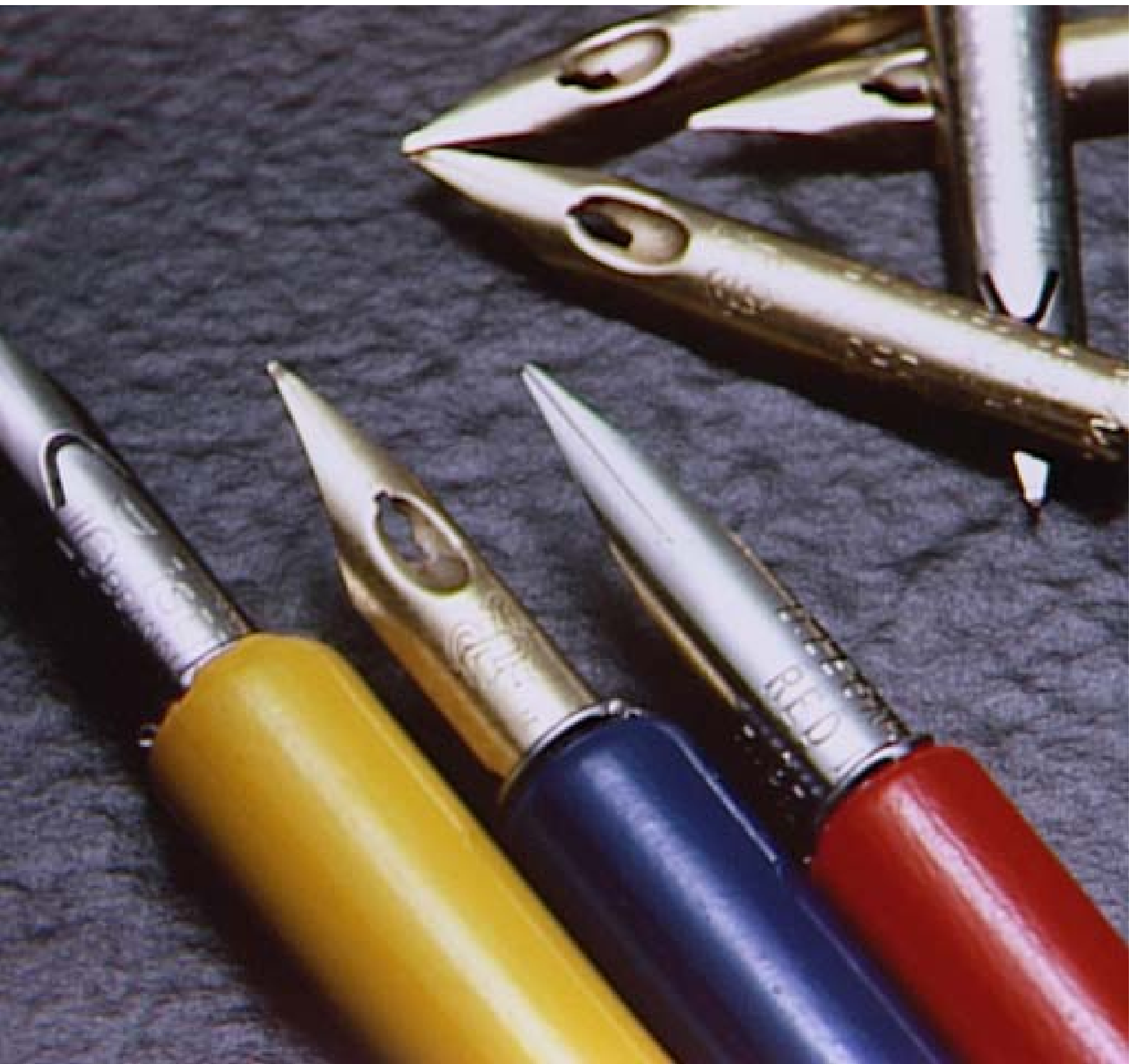}
\includegraphics[width=.158\linewidth, trim = 0.0cm 0cm 0cm 0cm,clip=true]{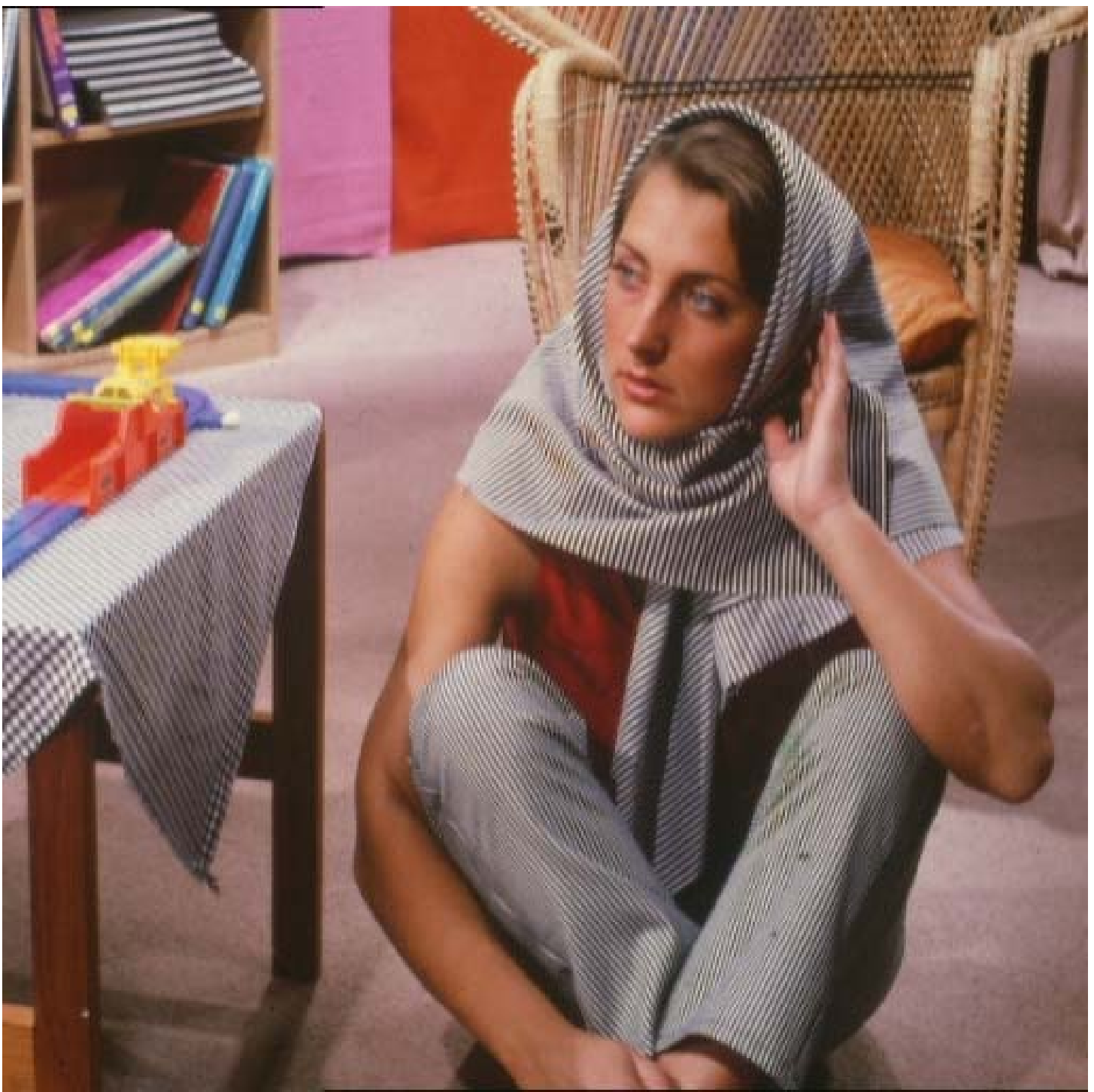}
\includegraphics[width=.158\linewidth, trim = 0.0cm 0cm 0cm 0cm,clip=true]{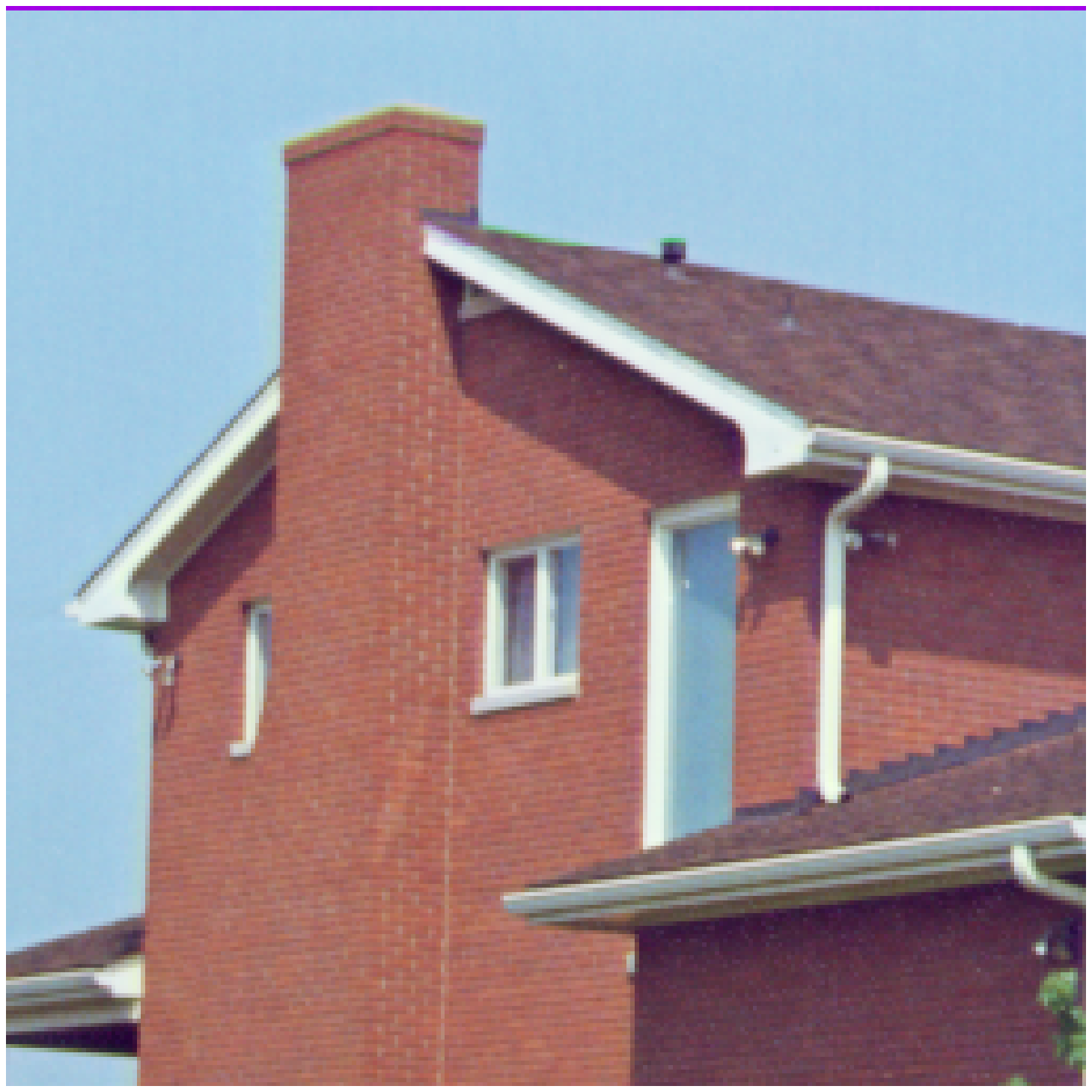}
\includegraphics[width=.158\linewidth, trim = 0.0cm 0cm 0cm 0cm,clip=true]{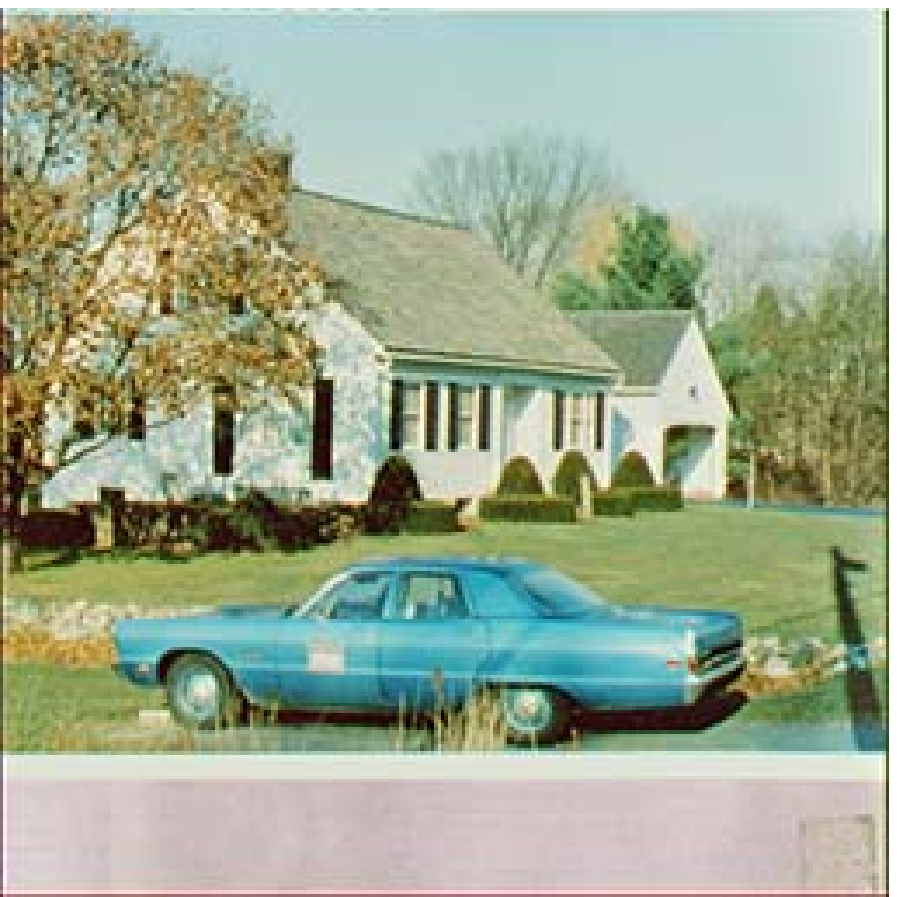}
%\vspace{-1ex}
\caption{Six images of size $256 \times 256$ are used in Example~\ref{ex_TT_denoising_image}.}
\label{fig_6images}
\end{figure}

In our simulations, we used six benchmark color images of size $256 \times 256 \times 3$ (illustrated in Fig.~\ref{fig_6images}), and corrupted them white Gaussian noise at SNR = 10 dB.
Latent structures were learnt for patches of sizes $8 \times 8 \times 3$ (i.e., $h = w = 8$) in the search area of width $d = 3$. 
To the noisy images, we applied the DCT spatial filtering before the block reconstruction.
For the block approximation problem in (\ref{equ_block_approx}), we applied several tensor decompositions, including
the TT-SVD, the Tucker approximation (TKA) with a predefined approximation error, the Bayesian Robust tensor factorisation (BRTF) for low-rank CP decomposition \cite{DBLP:journals/pami/ZhaoZC15},  and the alternating single core update algorithm (ASCU).
The algorithm for TKA works in a similar way to the Tucker-2 algorithm in Section~\ref{sec:tt_order3}, but for an order-5 tensor, and estimates 5 factor matrices.
In addition, we recovered the image with sparsity constraints using a dictionary of 256 atoms learnt by K-SVD\cite{Aharon2006}.%, and the block matching algorithm CBM3D \cite{}.
For this method, three layers of color images were flattened to an array of size $256 \times 768$. The dictionary was learnt for patches of size $8 \times 8$.

The quality of images reconstructed by five different methods was assessed using three indices:  mean-squared error (MSE), peak signal-to-noise ratio (PSNR), and the structural similarity index (SSIM). The results are shown in Table~\ref{tab_image_denosing}, and illustrated in Figs.~\ref{fig_lena_10dB}-\ref{fig_pepper_10dB}.
By learning similarities between patches, our proposed method was able to  recover the image, and achieved better performance than the well-known denoising method based on dictionary learning. Moreover, the results confirm the superiority of our proposed ASCU algorithm over the TT-SVD algorithm, and over other tensor decompositions. 
Using the ADCU algorithm, we obtained comparable performances to those of ASCU.

Finally, Fig.~\ref{fig_tt_rank_maps} visualizes the TT-rank maps of tensors approximated using ASCU,
overlaid by edges of the images.  
Each entry of the map represents the average of the sum of the TT-ranks of tensors which reconstruct the pixel at the same location.
{\emph{It is clear that the TT-ranks of the blocks containing the details were high, while they are low at flat regions that contain no details}}.

\begin{table}[t]
\caption{The performance comparison of algorithms considered in Example~\ref{ex_TT_denoising_image}  in terms of MSE (dB), PSNR (dB) and SSIM for image denoising when SNR = 10 dB.}\label{tab_image_denosing}
\centering
\begin{tabular}{lc@{}cccc@{}ccccccccc}
Algorithms    	&&      MSE	&       PSNR      	& SSIM    && MSE &       PSNR   	& SSIM \\
\hline
 &&  \multicolumn{3}{c}{Lena}&&  \multicolumn{3}{c}{Pepper}    \\\cline{3-5}\cline{7-9} 
TT-SVD 	&& 	  35.11   &    32.68   &    0.892  				&&   40.40   &    32.07   &    0.861    \\
TT-ASCU		&& 	  \bf{27.37}   &    \bf{33.76}   &    \bf{0.927}   &&    \bf{31.47}   &    \bf{33.15}   &    {0.924}   \\
TT-ADCU  	&&	  28.04   &   33.65    &    0.926  			&&   32.09   &    33.07 & 0.923\\
Tucker    		&&	  34.59   &    32.74   &    0.919 			&&   38.96   &    32.23   &    0.917   \\
BRTF    		&&	  40.30   &    32.07   &    0.840  			&&   46.85   &    31.42   &    0.825   \\
%BRTF    		&&	  36.49   &    32.51   &    0.90    
K-SVD  		&&	  34.76   &    32.72   &    0.908  			&&  35.74   &    32.60   &    0.918   \\\hline
% 
%CBM3D		&&     28.91   &    33.52   &    0.925			&&  32.67   &    32.99   &    \bf{0.929}   \\\hline
%
% &&  \multicolumn{3}{c}{Tiffany}&&  \multicolumn{3}{c}{Barbara}     \\\cline{3-5}\cline{7-9} 
%%
%TT-SVD  &&   18.46   &    35.47   &    0.913    &&   32.30   &    33.04   &    0.901  \\
%TT-ASCU		&&   \bf{15.35}   &    \bf{36.27}   &    \bf{0.936}  &&   \bf{24.92}   &    \bf{34.16}   &    \bf{0.934}   \\
%Tucker    		&&   18.46   &    35.47   &    0.926  &&   33.20   &    32.92   &    0.919   \\
%BRTF    		&&   20.23   &    35.07   &    0.888  &&   31.87   &    33.10   &    0.899   \\
%K-SVD  		&&   17.87   &    35.61   &    0.928  &&   35.41   &    32.64   &    0.908   \\\hline
 &&  \multicolumn{3}{c}{Pens}&&  \multicolumn{3}{c}{Barbara}     \\\cline{3-5}\cline{7-9} 
TT-SVD 		&&   44.92   & 	31.61   & 	 0.884    &&   32.30   &    33.04   &    0.901  \\
TT-ASCU		&&   \bf{36.61}   &	\bf{32.50}   &	 \bf{0.908}   &&   \bf{24.92}   &    \bf{34.16}   &    \bf{0.934}   \\
Tucker    		&&   48.56   &	31.27   &	 0.884    &&   33.20   &    32.92   &    0.919   \\
BRTF    		&&   42.80   &	31.82   &	 0.877  &&   31.87   &    33.10   &    0.899   \\
K-SVD  		&&   50.04   & 	31.14   &	 0.862  &&   35.41   &    32.64   &    0.908   \\ \hline
%CBM3D		&&   39.68   &   32.15   &   0.897  &&    27.17  &    33.80  &    0.928	\\\hline
%
 && \multicolumn{3}{c}{House}   &&  \multicolumn{3}{c}{House2}     \\\cline{3-5}\cline{7-9} 
TT-SVD   &&   23.70    &    34.38   & 0.877				&& 	41.07 & 32.00	& 0.905		\\
TT-ASCU &&   {19.30}   &    {35.28}   &    \bf{0.899}   	&&\bf{38.53} & \bf{32.27}	& \bf{0.926}	\\
Tucker     &&    23.64   &    34.40   &    0.885 			&& 	48.11 & 31.31	& 0.909		\\
BRTF      &&    27.93   &    33.67   &    0.823   			&&	42.99 & 31.80   & 0.867		\\
K-SVD    &&    22.18   &    34.67   &    0.881  			&& 	46.44 & 31.46	& 0.907 \\	\hline
%CMB3D   &&   \bf{17.27}   &    \bf{35.76}   &    0.898		&&    41.71&  31.93  &  0.919 \\\hline
    \end{tabular}
    \end{table}

\comment{
\begin{figure}[t!]
\centering
\begin{minipage}{.3\textwidth}
\subfigure[Noisy image at SNR = 10 dB]{\includegraphics[width=1\textwidth, trim = 0.0cm 0cm 0cm 0cm,clip=true]{fig_lena256_10dB}
\label{fig_lena256_10dB}}
\end{minipage}
\hfill
\begin{minipage}{.65\textwidth}
\subfigure[Noisy patch]{\includegraphics[width=.3\linewidth, trim = 5cm 4.5cm 4.5cm 5cm,clip=true]{fig_lena256_10dB}}\hfill
\subfigure[TT-ASCU, PSNR = 33.76 dB]{\includegraphics[width=.3\linewidth, trim = 5cm 4.5cm 4.5cm 5cm,clip=true]{fig_lena256_10dB_tt_ascu}
\label{fig_lena256_10dB_tt_ascu}}
\hfill
\subfigure[TT-SVD, PSNR = 32.68 dB]{\includegraphics[width=.3\linewidth, trim = 5cm 4.5cm 4.5cm 5cm,clip=true]{fig_lena256_10dB_tt_truncation}
\label{fig_lena256_10dB_tt_truncation}}
\hfill
\subfigure[TKA, PSNR = 32.74 dB]{\includegraphics[width=.3\linewidth, trim =5cm 4.5cm 4.5cm 5cm,clip=true]{fig_lena256_10dB_tucker}
\label{fig_lena256_10dB_tucker}}
\hfill
\subfigure[K-SVD, PSNR = 32.72 dB]{\includegraphics[width=.3\linewidth, trim = 5cm 4.5cm 4.5cm 5cm,clip=true]{fig_lena256_10dB_ksvd}
\label{fig_lena256_10dB_ksvd}}
\hfill
\subfigure[BRTF, PSNR = 33.51 dB]{\includegraphics[width=.3\linewidth, trim = 5cm 4.5cm 4.5cm 5cm,clip=true]{fig_lena256_10dB_brtf}
\label{fig_lena256_10dB_brtf}}
\end{minipage}
\caption{Lena image with added noise at 10 dB SNR, and the images reconstructed by different methods in Example~\ref{ex_TT_denoising_image}.}
\label{fig_lena_10dB}
\end{figure}
}

\begin{figure}[t!]
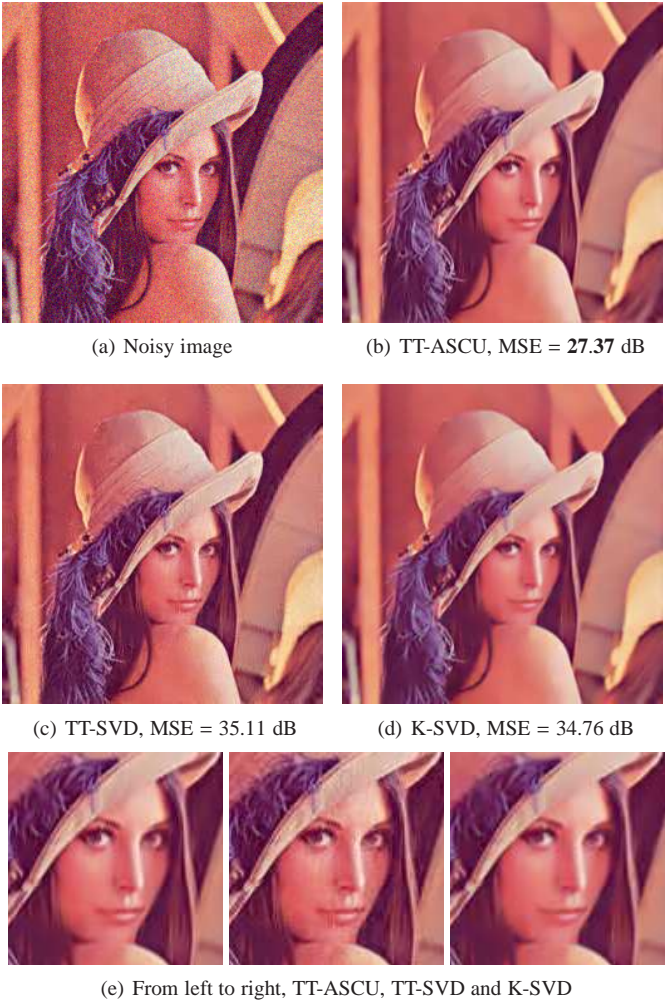

\centering
%
%\begin{minipage}{.3\textwidth}
%\subfigure[Noisy image at SNR = 10 dB]{\includegraphics[width=1\textwidth, trim = 0.0cm 0cm 0cm 0cm,clip=true]{fig_lena256_10dB}
%\label{fig_lena256_10dB}}
%\end{minipage}
%
%
%\hfill
%\begin{minipage}{.65\textwidth}
\subfigure[Noisy image]{\includegraphics[width=.48\linewidth, trim = 0cm 0cm 0cm 0cm,clip=true]{fig_lena256_10dB}}
%\subfigure[Noisy patch]{\includegraphics[width=.48\linewidth, trim = 2cm 2.5cm 2.5cm 2cm,clip=true]{fig_lena256_10dB}}
\hfill 
\subfigure[TT-ASCU,  MSE = $\bf 27.37$ dB]{\includegraphics[width=.48\linewidth, trim = 0cm 0cm 0cm 0cm,clip=true]{fig_lena256_10dB_tt_ascu}
\label{fig_lena256_10dB_tt_ascu}}
%\subfigure[TT-ASCU,  MSE = $\bf 27.37$ dB]{\includegraphics[width=.48\linewidth, trim = 2cm 2.5cm 2.5cm 2cm,clip=true]{fig_lena256_10dB_tt_ascu}
%\label{fig_lena256_10dB_tt_ascu}}
%
\hfill
\subfigure[TT-SVD, MSE = 35.11 dB]{\includegraphics[width=.48\linewidth, trim = 0cm 0cm 0cm 0cm,clip=true]{fig_lena256_10dB_tt_truncation}
\label{fig_lena256_10dB_tt_truncation}}
%
%\subfigure[TT-SVD, MSE = 35.11 dB]{\includegraphics[width=.48\linewidth, trim = 2cm 2.5cm 2.5cm 2cm,clip=true]{fig_lena256_10dB_tt_truncation}
%\label{fig_lena256_10dB_tt_truncation}}
%
%\hfill
%\subfigure[TKA, PSNR = 32.74 dB]{\includegraphics[width=.3\linewidth, trim =5cm 4.5cm 4.5cm 5cm,clip=true]{fig_lena256_10dB_tucker}
%\label{fig_lena256_10dB_tucker}}
%%
%\hfill
\hfill
\subfigure[K-SVD, MSE = 34.76 dB]{\includegraphics[width=.48\linewidth, trim = 0cm 0cm 0cm 0cm,clip=true]{fig_lena256_10dB_ksvd}
\label{fig_lena256_10dB_ksvd}}
\subfigure[From left to right, TT-ASCU, TT-SVD and K-SVD]{
\includegraphics[width=.32\linewidth, trim = 4cm 2.5cm 2.5cm 4cm,clip=true]{fig_lena256_10dB_tt_ascu}
\includegraphics[width=.32\linewidth, trim = 4cm 2.5cm 2.5cm 4cm,clip=true]{fig_lena256_10dB_tt_truncation}
\includegraphics[width=.32\linewidth, trim = 4cm 2.5cm 2.5cm 4cm,clip=true]{fig_lena256_10dB_ksvd}
}
%%\subfigure[K-SVD, MSE = 34.76 dB]{\includegraphics[width=.48\linewidth, trim = 2cm 2.5cm 2.5cm 2cm,clip=true]{fig_lena256_10dB_ksvd}
%\label{fig_lena256_10dB_ksvd}}
%%
%\hfill
%\subfigure[BRTF, PSNR = 33.51 dB]{\includegraphics[width=.3\linewidth, trim = 5cm 4.5cm 4.5cm 5cm,clip=true]{fig_lena256_10dB_brtf}
%\label{fig_lena256_10dB_brtf}}
%\end{minipage}
\caption{The Lena image corrupted by noise at 10 dB SNR, and the patches reconstructed by different methods in Example~\ref{ex_TT_denoising_image}.}
\label{fig_lena_10dB}
\end{figure}

\begin{figure}[ht!]
\centering
%\subfigure[Tucker]{\includegraphics[width=.47\linewidth, trim = 0cm 0cm 0cm 0cm,clip=true]{fig_denoising_lena_rank_tucker_overlay}}
%\hfill 
\subfigure[Lena]{\includegraphics[width=.47\linewidth, trim = 0cm 0cm 0cm 0cm,clip=true]{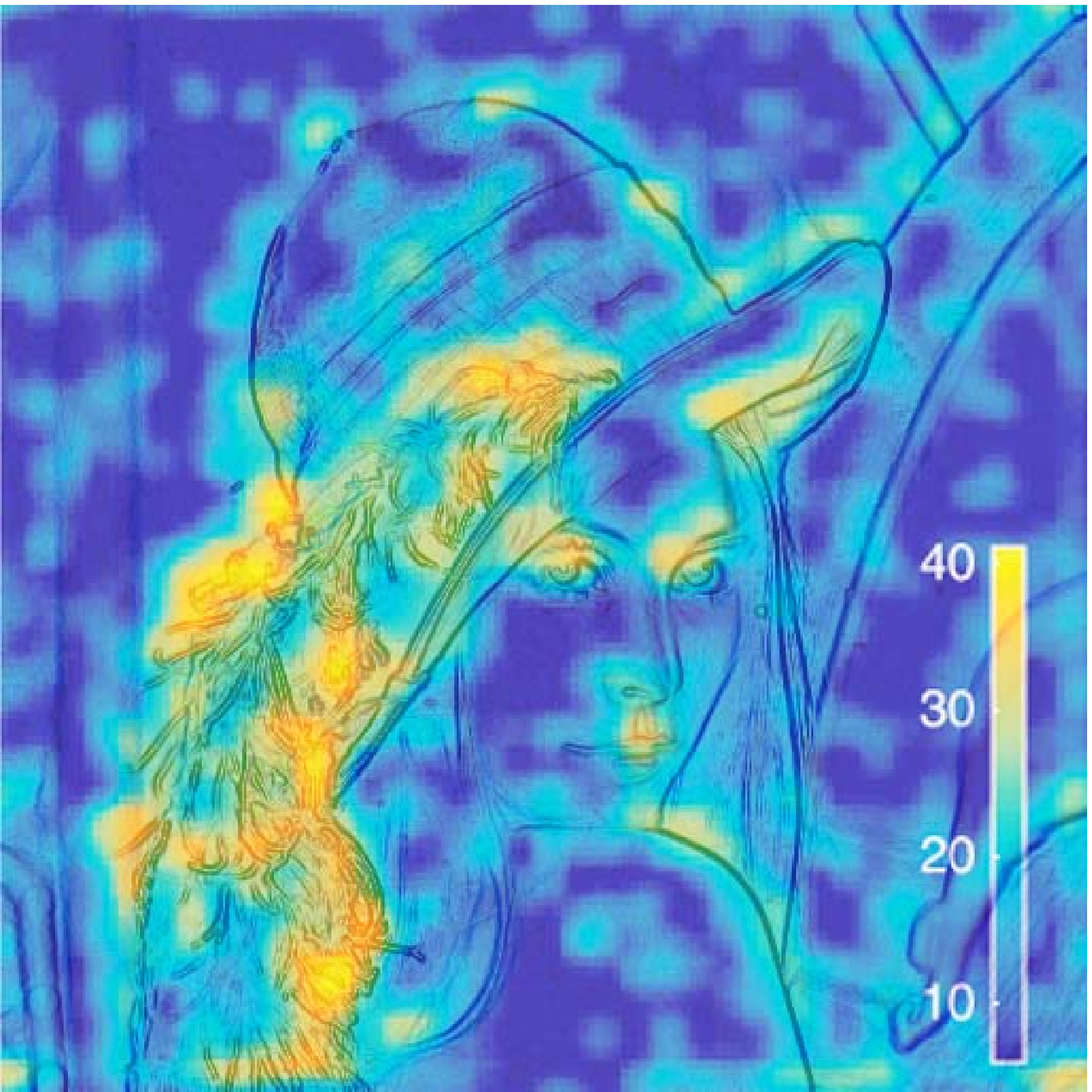}}
\hfill 
\subfigure[Pepper]{\includegraphics[width=.47\linewidth, trim = 0cm 0cm 0cm 0cm,clip=true]{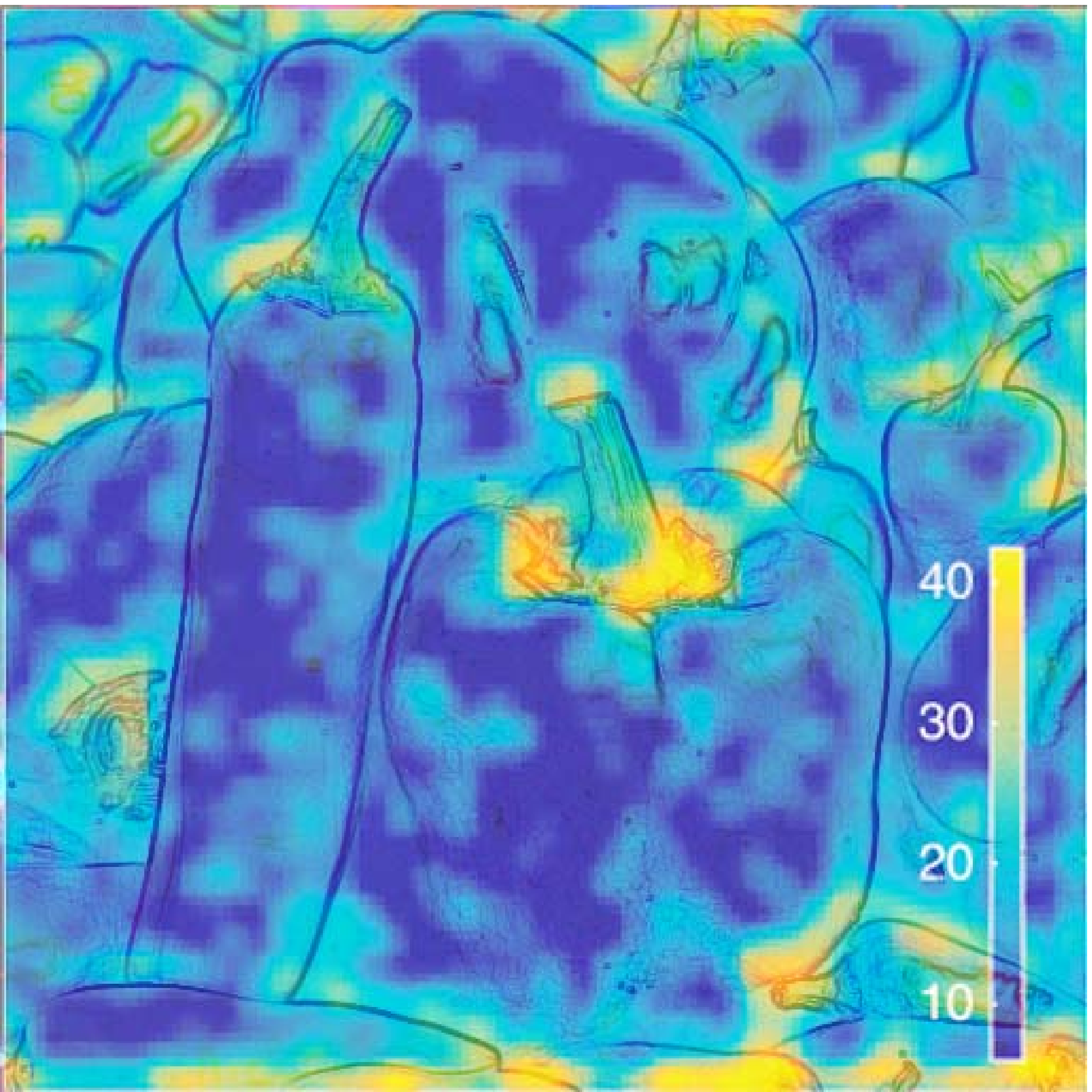}}
%\subfigure[ADCU]{\includegraphics[width=.47\linewidth, trim = 0cm 0cm 0cm 0cm,clip=true]{fig_denoising_lena_rank_adcu.eps}}
%\hfill 
%\subfigure[ATCU]{\includegraphics[width=.47\linewidth, trim = 0cm 0cm 0cm 0cm,clip=true]{fig_denoising_lena_rank_atcu.eps}}
\caption{Visualization of the TT-rank maps in Example~\ref{ex_TT_denoising_image}.
Each entry of the map expresses the average of the sum of the TT-ranks of the TT-tensors which cover the corresponding pixel.}
\label{fig_tt_rank_maps}
\end{figure}
%
%\begin{figure}[t!]
%\centering
%%
%\begin{minipage}{.3\textwidth}
%\subfigure[Noisy image at SNR = 10 dB]{\includegraphics[width=1\textwidth, trim = 0.0cm 0cm 0cm 0cm,clip=true]{fig_house_256_10dB}
%\label{fig_lena256_10dB}}
%\end{minipage}
%%
%%
%\hfill
%\begin{minipage}{.65\textwidth}
%\subfigure[Noisy patch]{\includegraphics[width=.3\textwidth, trim = 5cm 4.5cm 4.5cm 5cm,clip=true]{fig_house_256_10dB}}\hfill
%\subfigure[TT-ASCU]{\includegraphics[width=.3\textwidth, trim = 5cm 4.5cm 4.5cm 5cm,clip=true]{fig_house_256_10dB_tt_ascu}
%\label{fig_house_256_10dB_tt_ascu}}
%%
%\hfill
%\subfigure[TT-Truncation]{\includegraphics[width=.3\textwidth, trim = 5cm 4.5cm 4.5cm 5cm,clip=true]{fig_house_256_10dB_tt_truncation}
%\label{fig_house_256_10dB_tt_truncation}}
%%
%\hfill
%\subfigure[Tucker decomposition]{\includegraphics[width=.3\textwidth, trim =5cm 4.5cm 4.5cm 5cm,clip=true]{fig_house_256_10dB_tucker}
%\label{fig_house_256_10dB_tucker}}
%%
%\hfill
%\subfigure[K-SVD]{\includegraphics[width=.3\textwidth, trim = 5cm 4.5cm 4.5cm 5cm,clip=true]{fig_house_256_10dB_ksvd}
%\label{fig_house_256_10dB_ksvd}}
%%
%\hfill
%\subfigure[BRTF]{\includegraphics[width=.3\textwidth, trim = 5cm 4.5cm 4.5cm 5cm,clip=true]{fig_house_256_10dB_brtf}
%\label{fig_house_256_10dB_brtf}}
%\end{minipage}
%\caption{Reconstructed images by different methods in Example~\ref{ex_TT_denoising_image}.}
%\end{figure}
%

\comment{
\begin{figure}[t!]
\centering
\begin{minipage}{.3\textwidth}
\subfigure[Noisy image at SNR = 10 dB]{\includegraphics[width=1\textwidth, trim = 0.0cm 0cm 0cm 0cm,clip=true]{fig_pepper_256_10dB}
\label{fig_pepper_256_10dB}}
\end{minipage}
\hfill
\begin{minipage}{.65\textwidth}
\subfigure[Noisy patch]{\includegraphics[width=.3\textwidth, trim = 5cm 4.5cm 4.5cm 5cm,clip=true]{fig_pepper_256_10dB}}\hfill
\subfigure[TT-ASCU, 33.15 dB]{\includegraphics[width=.3\textwidth, trim = 5cm 4.5cm 4.5cm 5cm,clip=true]{fig_pepper_256_10dB_tt_ascu}
\label{fig_pepper_256_10dB_tt_ascu}}
\hfill
\subfigure[TT-SVD, 32.07 dB]{\includegraphics[width=.3\textwidth, trim = 5cm 4.5cm 4.5cm 5cm,clip=true]{fig_pepper_256_10dB_tt_truncation}
\label{fig_pepper_256_10dB_tt_truncation}}
\hfill
\subfigure[TKA, 32.23 dB]{\includegraphics[width=.3\textwidth, trim =5cm 4.5cm 4.5cm 5cm,clip=true]{fig_pepper_256_10dB_tucker}
\label{fig_pepper_256_10dB_tucker}}
\hfill
\subfigure[K-SVD, 32.60 dB]{\includegraphics[width=.3\textwidth, trim = 5cm 4.5cm 4.5cm 5cm,clip=true]{fig_pepper_256_10dB_ksvd}
\label{fig_pepper_256_10dB_ksvd}}
\hfill
\subfigure[BRTF, 31.42 dB]{\includegraphics[width=.3\textwidth, trim = 5cm 4.5cm 4.5cm 5cm,clip=true]{fig_pepper_256_10dB_brtf}
\label{fig_pepper_256_10dB_brtf}}
\end{minipage}
\caption{Pepper image with added noise at 10 dB SNR, and the images reconstructed by different methods in Example~\ref{ex_TT_denoising_image}.}
\label{fig_pepper_10dB}
\end{figure}
}

\begin{figure}[t!]
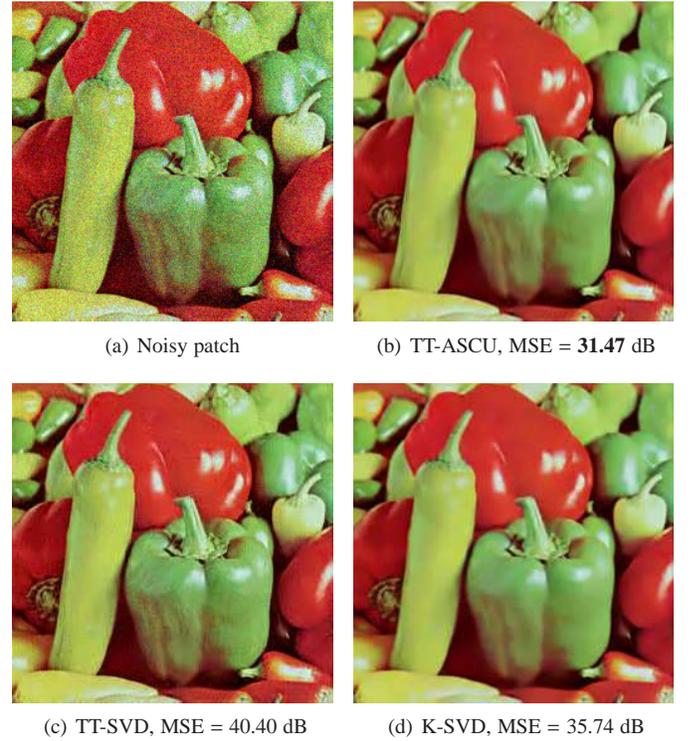

\centering
\subfigure[Noisy patch]{\includegraphics[width=.48\linewidth, trim = 0cm 0cm 0cm 0cm,clip=true]{fig_pepper_256_10dB}}
\hfill
\subfigure[TT-ASCU, MSE = {\bf 31.47} dB]{\includegraphics[width=.48\linewidth, trim =  0cm 0cm 0cm 0cm,clip=true]{fig_pepper_256_10dB_tt_ascu}
\label{fig_pepper_256_10dB_tt_ascu}}
\hfill
\subfigure[TT-SVD, MSE = 40.40 dB]{\includegraphics[width=.48\linewidth, trim =  0cm 0cm 0cm 0cm,clip=true]{fig_pepper_256_10dB_tt_truncation}
\label{fig_pepper_256_10dB_tt_truncation}}
\hfill
\subfigure[K-SVD, MSE = 35.74 dB]{\includegraphics[width=.48\linewidth, trim =  0cm 0cm 0cm 0cm,clip=true]{fig_pepper_256_10dB_ksvd}
\label{fig_pepper_256_10dB_ksvd}}
%%\subfigure[Noisy patch]{\includegraphics[width=.48\linewidth, trim = 2cm 2.5cm 2.5cm 2cm,clip=true]{fig_pepper_256_10dB}}
%\hfill
%\subfigure[TT-ASCU, MSE = {\bf 31.47} dB]{\includegraphics[width=.48\linewidth, trim = 2cm 2.5cm 2.5cm 2cm,clip=true]{fig_pepper_256_10dB_tt_ascu}
%\label{fig_pepper_256_10dB_tt_ascu}}
%%
%\hfill
%\subfigure[TT-SVD, MSE = 40.40 dB]{\includegraphics[width=.48\linewidth, trim = 2cm 2.5cm 2.5cm 2cm,clip=true]{fig_pepper_256_10dB_tt_truncation}
%\label{fig_pepper_256_10dB_tt_truncation}}
%\hfill
%\subfigure[K-SVD, MSE = 35.74 dB]{\includegraphics[width=.48\linewidth, trim = 2cm 2.5cm 2.5cm 2cm,clip=true]{fig_pepper_256_10dB_ksvd}
%\label{fig_pepper_256_10dB_ksvd}}
\caption{The Pepper image with added noise at 10 dB SNR, and the images reconstructed by different methods in Example~\ref{ex_TT_denoising_image}.}
\label{fig_pepper_10dB}
\end{figure}

%  [226.1881]    [24.5861]    [0.6213]    [19.1474]    [0.5207]
%
%    [131.1768]    [26.9522]    [0.8033]    [18.9000]    [0.6305]
%
%    [140.9880]    [26.6390]    [0.7976]    [18.1625]    [0.6217]
%
%    [403.4195]    [22.0732]    [0.4223]    [17.9985]    [0.4183]
%
%    [151.8675]    [26.3162]    [0.8063]    [17.1155]    [0.6211]
%    
%    

\comment{
\begin{figure}[t!]
\centering
\begin{minipage}{.3\textwidth}
\subfigure[Noisy image at SNR = 0 dB]{\includegraphics[width=1\textwidth, trim = 0.0cm 0cm 0cm 0cm,clip=true]{fig_pepper_256_0dB}
\label{fig_pepper_256_0dB}}
\end{minipage}
\hfill
\begin{minipage}{.65\textwidth}
\subfigure[Noisy patch]{\includegraphics[width=.3\textwidth, trim = 5cm 4.5cm 4.5cm 5cm,clip=true]{fig_pepper_256_10dB}}\hfill
\subfigure[TT-ASCU, 26.95 dB]{\includegraphics[width=.3\textwidth, trim = 5cm 4.5cm 4.5cm 5cm,clip=true]{fig_pepper_256_0dB_tt_ascu}
\label{fig_pepper_256_0dB_tt_ascu}}
\hfill
\subfigure[TT-SVD, 24.59 dB]{\includegraphics[width=.3\textwidth, trim = 5cm 4.5cm 4.5cm 5cm,clip=true]{fig_pepper_256_0dB_tt_truncation}
\label{fig_pepper_256_0dB_tt_truncation}}
\hfill
\subfigure[TKA, 26.64 dB]{\includegraphics[width=.3\textwidth, trim =5cm 4.5cm 4.5cm 5cm,clip=true]{fig_pepper_256_0dB_tucker}
\label{fig_pepper_256_0dB_tucker}}
\hfill
\subfigure[K-SVD, 26.32 dB]{\includegraphics[width=.3\textwidth, trim = 5cm 4.5cm 4.5cm 5cm,clip=true]{fig_pepper_256_0dB_ksvd}
\label{fig_pepper_256_0dB_ksvd}}
\hfill
\subfigure[BRTF, 22.07 dB]{\includegraphics[width=.3\textwidth, trim = 5cm 4.5cm 4.5cm 5cm,clip=true]{fig_pepper_256_0dB_brtf}
\label{fig_pepper_256_0dB_brtf}}
\end{minipage}
\caption{Pepper image with added noise at 0 dB SNR, and the images reconstructed by different methods in Example~\ref{ex_TT_denoising_image}.}
\label{fig_pepper_0dB}
\end{figure}
}

\comment{
\begin{figure}[t!]
\centering
\subfigure[Noisy patch]{\includegraphics[width=.3\linewidth, trim = 5cm 4.5cm 4.5cm 5cm,clip=true]{fig_pepper_256_10dB}}\hfill
\subfigure[TT-ASCU, 26.95 dB]{\includegraphics[width=.3\linewidth, trim = 5cm 4.5cm 4.5cm 5cm,clip=true]{fig_pepper_256_0dB_tt_ascu}
\label{fig_pepper_256_0dB_tt_ascu}}
\hfill
\subfigure[TT-SVD, 24.59 dB]{\includegraphics[width=.3\linewidth, trim = 5cm 4.5cm 4.5cm 5cm,clip=true]{fig_pepper_256_0dB_tt_truncation}
\label{fig_pepper_256_0dB_tt_truncation}}
\caption{Pepper image with added noise at 0 dB SNR, and the images reconstructed by different methods in Example~\ref{ex_TT_denoising_image}.}
\label{fig_pepper_0dB}
\end{figure}
}
%
%
%fig_pepper_256_0dB_brtf.eps
%fig_pepper_256_0dB_ksvd.eps
%fig_pepper_256_0dB_tt_ascu.eps
%fig_pepper_256_0dB_tt_truncation.eps
%fig_pepper_256_0dB_tucker.eps
%fig_pepper_256_0dB.eps

\end{example}

\begin{example}{\bf Blind Source Separation of exponentially decaying signals from a single channel mixture.}\label{ex_TT_separation_exp}
In the final example, we considered a problem of blind source separation of three exponentially decaying signals from a single mixture $y(t)$ observed for a large number of samples $K =  3 \, 2^{16}$, given by 
\be
y(t) &=&   x_1(t)   + x_2(t)  + x_3(t) + e(t)		\notag 
\ee
where 
\be
x_r(t) = \exp({\frac{-5t}{rK}}) \, \sin(\frac{2\pi f_r}{f_s} t  + \frac{r\pi}{3}) \,,  \quad \text{for} \;\; r = 1, 2, 3,	\notag 
\ee
with $f_r  = 10, 10.1$ and 10.2 Hz, $f_s = 200$ Hz, and $e(t)$ represents an additive Gaussian noise.
The noisy mixture $y(t)$ at the signal-noise-ratio SNR = -10 dB, is plotted in Fig.~\ref{fig_tt_algs_expsin_sepa_R3_d15_y}.

\begin{figure}[t!]
\centering
\psfrag{Intensity}[cc][cc]{\scalebox{1}{\color[rgb]{0,0,0}\setlength{\tabcolsep}{0pt}\begin{tabular}{c}\smaller\smaller\smaller\smaller \vspace{1em}Intensity\end{tabular}}}%
\subfigure[Mixture at SNR =  -10 dB.]{\includegraphics[width=.47\linewidth, trim = 1cm 0cm 1cm 0cm,clip=true]{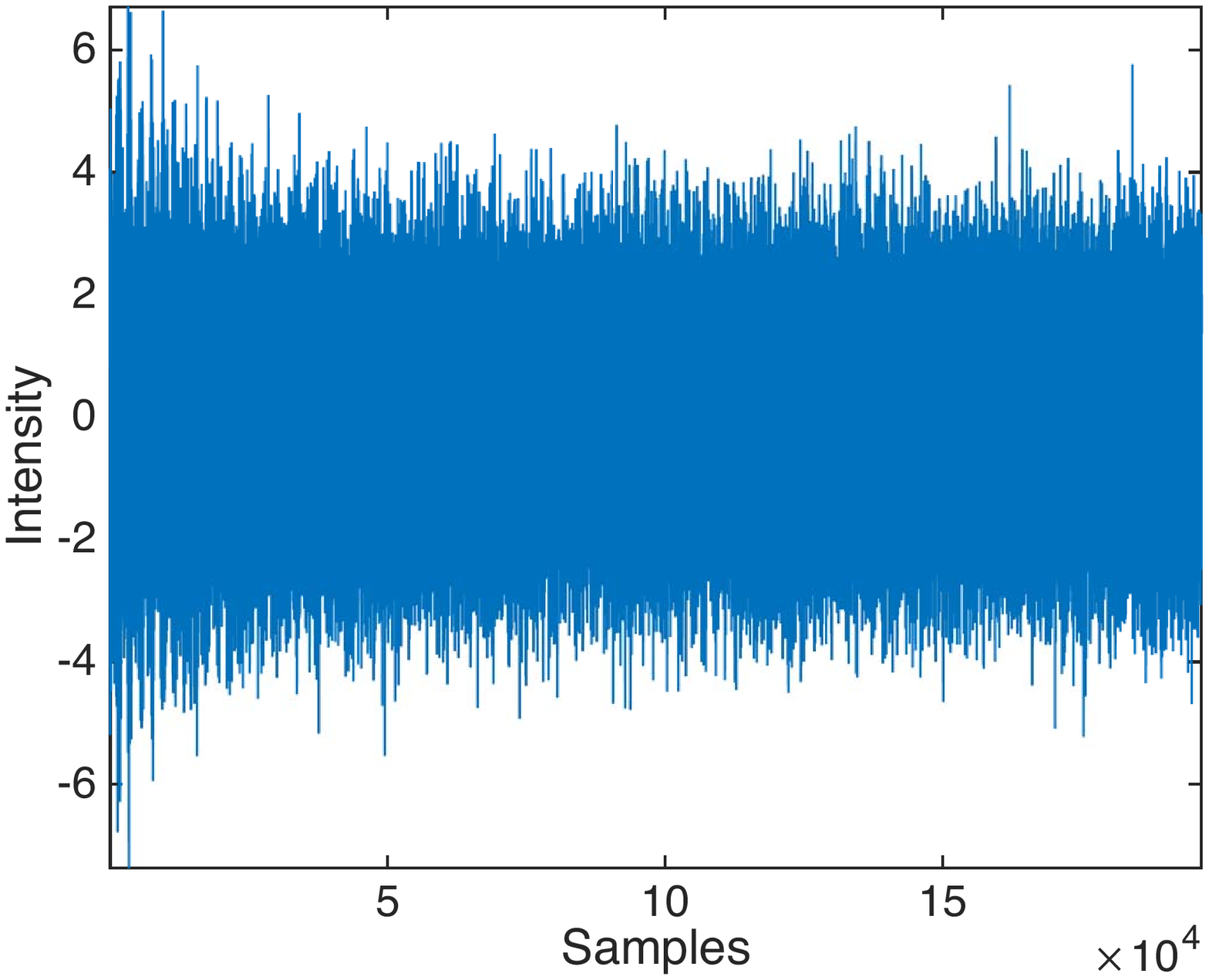}
\label{fig_tt_algs_expsin_sepa_R3_d15_y}}
\hfill
\psfrag{Approximation Error}[cc][cc]{\scalebox{1}{\color[rgb]{0,0,0}\setlength{\tabcolsep}{0pt}\begin{tabular}{c}\smaller\smaller\smaller\smaller \vspace{1em}Relative Error\end{tabular}}}%
\psfrag{Truncation}[lc][lc]{\scalebox{.6}{\color[rgb]{0,0,0}\setlength{\tabcolsep}{0pt}\begin{tabular}{l}\smaller\smaller\smaller\smaller\smaller   TT-SVD\end{tabular}}}%
\psfrag{ADCU}[lc][lc]{\scalebox{.6}{\color[rgb]{0,0,0}\setlength{\tabcolsep}{0pt}\begin{tabular}{l}\smaller\smaller\smaller\smaller\smaller   ADCU\end{tabular}}}
\psfrag{ATCU}[lc][lc]{\scalebox{.6}{\color[rgb]{0,0,0}\setlength{\tabcolsep}{0pt}\begin{tabular}{l}\smaller\smaller\smaller\smaller\smaller   ATCU\end{tabular}}}%
\subfigure[Relative Error as a function of iterations.]{\includegraphics[width=.45\linewidth, trim = 0.0cm 0cm 0cm 0cm,clip=true]{fig_tt_algs_expsin_sepa_R3_d15_SNR-10_x}
\label{fig_tt_algs_expsin_sepa_R3_d15_SNRm10}}
%
%
%\subfigure[Mixture of length $K = 196.608$  at SNR =  0 dB]{\includegraphics[width=.45\textwidth, trim = 0.0cm 0cm 0cm 0cm,clip=true]{fig_tt_algs_expsin_sepa_R3_d15_y_SNR0}
%\label{fig_tt_algs_expsin_sepa_R3_d15_y_SNR0}}
%%
%\hfill
%\subfigure[Relative Error as function of iterations]{\includegraphics[width=.45\textwidth, trim = 0.0cm 0cm 0cm 0cm,clip=true]{fig_tt_algs_expsin_sepa_R3_d15_SNR0}
%\label{fig_tt_algs_expsin_sepa_R3_d15_SNR0}}
%
\caption{Illustration for Example~\ref{ex_TT_separation_exp} for signals of $K = 196,608$ samples and the signal-noise-ratio SNR = -10 dB.}
\label{fig_ex_TT_separation_exp}
\end{figure}

In order to separate the three signals $x_r(t)$ from the mixture $y(t)$,  we tensorized the mixture to an order-$16$ tensor $\tY$ of size $2 \times 2 \times \cdots \times 2 \times 6$. With this tensorization, each  decaying signal $x(t)$ had a TT-representation of rank-$(2,2,\ldots, 2)$. Hence, we were able to approximate $\tY$ as a sum of three TT-tensors $\tX_r$ of rank-$(2,2,\ldots, 2)$, that is
\be 
\min  \quad \|\tY - \tX_1 - \tX_2  - \tX_3\|_F^2 \, .\notag 
\ee
For this purpose, we sequentially fit a TT-tensor $\tX_r$ to the residual of the data $\tY$ with its approximation by the two other TT-tensors $\tX_s$ where $s \neq r$, that is, 
\be
\argmin_{\tX_r} \|\tY_{r} - \tX_r \|_F^2	\label{eq_fit_Yr} 
\ee
where $\tY_r =  \tY - \sum_{s \neq r} \tX_s$, for $r = 1, 2, 3$.

The TT-SVD algorithm applied to the above problem (\ref{eq_fit_Yr}) was not able to obtain satisfactory estimates of the three sources. The mean SAE of the estimated signals was only 8.07 dB.
{\emph{This is because after the first few iterations, the TT-SVD tended not to work well}}. The new estimates $\tX_r$, obtained by the TT-SVD, were therefore not always better than the previous estimates. As a consequence, the global cost function did not always decrease, as seen in Fig.~\ref{fig_tt_algs_expsin_sepa_R3_d15_SNRm10}. 

In contrast, when using the proposed ASCU algorithm, we obtained the three estimated signals with SAEs of 17.49, 14.17   and 15.70 dB, respectively. The algorithm converged after 120 iterations.

%\begin{figure}[t!]
%\centering
%%
%\subfigure[Mixture of length $K = 49152$ at SNR = 10 dB]{\includegraphics[width=.45\textwidth, trim = 0.0cm 0cm 0cm 0cm,clip=true]{fig_tt_algs_expsin_sepa_R3_d13_y}
%\label{fig_tt_algs_expsin_sepa_R3_d3_y}}
%%
%\hfill
%\subfigure[Relative Error as function of iterations]{\includegraphics[width=.45\textwidth, trim = 0.0cm 0cm 0cm 0cm,clip=true]{fig_tt_algs_expsin_sepa_R3_d13_SNR10}
%\label{fig_tt_algs_expsin_sepa_R3_d13_SNR10}}
%%
%\caption{Illustration for Example~\ref{ex_TT_separation_exp}.}
%\label{fig_ex_TT_separation_expb}
%\end{figure}
%

%\begin{table}
%\centering
%\caption{Squared angular errors of estimated signals in Example~\ref{ex_TT_separation_exp}.}\label{tab_TT_separation_exp}
%\begin{tabular}{clccc}
%SNR (dB)  & Algorithm & \multicolumn{3}{c}{SAE (dB)}\\
%10 & Truncation  &25.44  & 5.92 &   4.90\\
%& ADCU &30.93&   22.02&   23.31\\
%-10 & Truncation  & 11.74   & 4.96   & 7.50\\
%& ADCU         & 17.49   &14.17   &15.70\\
%%0 & Truncation  & 26.76   &24.73   &25.51\\
%% & ADCU         & 28.65   &25.82   & 27.74 \\
%\end{tabular}
%\end{table}

\end{example}

\section{Conclusions and Further Extensions}\label{sec::conclusions}

We have presented novel algorithms for the TT decomposition,
which are capable of adjusting ranks of two or three core tensors while keeping the other cores fixed.
Compared to the TT-SVD, the proposed algorithms have achieved lower approximation errors 
for the decomposition with a given TT-rank, and yielded tensors with lower TT-ranks for constrained approximations with a prescribed error tolerance.
By employing progressive computation of contracted tensors and prior compression, the proposed algorithms have been shown to exhibit low computational complexity. 
The proposed algorithms can be naturally extended to the TT-decomposition with nonnegativity constraints or decompositions of incomplete data. The alternating multicore update methods can also be applied to the tensor chain decomposition. 
In the sequel of this study, we illuminate the use of the proposed algorithms in  blind source separation, and for a conversion of a TT-tensor to a low-rank tensor in 
CPD. The proposed algorithms are implemented in the Matlab package TENSORBOX which is available online at: {\url{http://www.bsp.brain.riken.jp/~phan/tensorbox.php}}.

\appendices

\section{The Alternating Double-Cores Update (ADCU)}\label{alg::TT_twocore_update}

Following on Section~\ref{sec:ascu_1}, we consider the case $m = n+1$.  In order to update two cores $\tX_n$ and $\tX_{n+1}$, 
the error function in (\ref{eq_cost_Gnm}) can be rewritten in the following form 
\begin{align}
D &=   \|\tY\|_F^2  - \|\tT_{n,n+1}\|_F^2 + \|\tT_{n,n+1} - \tX_n \bullet \tX_{n+1}\|_F^2 \label{eq_cost_GnGnp1} \notag\\
&= \|\tY\|_F^2  - \|\tT_{n,n+1}\|_F^2 + \| [\tT_{n,n+1}]_{(1,2)} - [\tX_n]_{(1,2)}  \, [\tX_{n+1}]_{(1)} \|_F^2   \notag 
\end{align}
where $\tT_{n,n+1}$ is an order-4 tensor of size  $R_{n-1} \times I_n  \times I_{n+1} \times R_{n+1}$. 
The TT-decomposition now becomes a low-rank matrix factorisation of $[\tT_{n,n+1}]_{(1,2)}$, which can be computed 
through the truncated SVD of  $[\tT_{n,n+1}]_{(1,2)} \approx \bU_n \diag(\sigma_{n,1}, \ldots, \sigma_{n,R_{n}^{\star}}) \, \bV_n^T$.
%as the factorisation of $\tT_n$ in (\ref{eq_factorize_Tn}). 
The rank $R_{n}^{\star}$, if not given, is the smallest number of singular values determined as in (\ref{eq_rank_Rn}).
%\be
%	[\tT_{n,n+1}]_{(1,2)} \approx \bU_n \diag(\sigma_{n,1}, \ldots, \sigma_{n,R_{n}^{\star}}) \, \bV_n^T \, .
%\ee
%where the rank $R_{n}^{\star}$, if not given, is the smallest number of singular values for which the following holds 
%\be
%	\sum_{r = 1}^{R_{n}^{\star}} \sigma_{n,r}^2 \ge  \|\tY\|_F^2 - \varepsilon^2 >  \sum_{r = 1}^{R_{n}^{\star}-1} \sigma_{n,r}^2 \,.
%\ee
The new estimate of $\tX_n$ can be either $\bU_n$ or $\bU_n \diag(\bsigma)$, depending on the update procedure. 
%For convenience, we summarise the update methods for $\tX_n$ and $\tX_{n+1}$ as summarised in Algorithm~\ref{alg_TT_2coresupdate}. 
%We note that the core tensor $\tX_n$ need not be orthogonalised. 

The ADCU operates in the same way as the DMRG algorithm\cite{White1993}, but with different error tolerances $\varepsilon_n^2$ for 
the sub-problem.

\section{The Alternating Triple-Cores Updates (ATCU)}\label{alg::TT_threecore_update}

This Appendix derives an alternating algorithm which updates three consecutive core tensors. Similar to the ADCU, the aim is to reduce the number of computation of tensor contractions $\tT_{n:m}$. Moreover, we show that the algorithm indeed estimates only two cores. Hence, the computational cost of this algorithm is not higher than that of the double-cores update algorithm.
First, we rewrite the error function in (\ref{eq_cost_Gnm}) as follows 
\be 
D %&=& \| \tY - \tX \|_F^2  
=  \|\tY\|_F^2  - \|\tT_{n:n+2}\|_F^2 + \|\tT_{n:n+2} - \tX_n \bullet \tX_{n+1} \bullet \tX_{n+2}\|_F^2 \notag  \label{eq_cost_GnGnp1Gnp2} 
\ee
where the contracted tensor $\tT_{n:n+2}$ is an order-5 tensor of size  $R_{n-1} \times I_n  \times I_{n+1} \times I_{n+2} \times R_{n+2}$.

The tensor $\tT_{n:n+2}$ is next reshaped into an order-3 tensor $\tZ_n$ of size $(R_{n-1}I_n) \times I_{n+1} \times (I_{n+2} R_{n+2} )$, i.e., by performing a mode-((1,2),3,(4,5)) unfolding   $\tZ_n = [\tT_{n:n+2}]_{(1,2),3,(4,5)}$. The above objective function is then given in the form of the Tucker-2 decomposition of $\tZ_n$, that is 
\be
D = \|\tY\|_F^2  - \|\tZ_n\|_F^2 + \| \tZ_n  -  \bU_n \bullet \tX_{n+1}  \bullet {\bV_n}  \|_F^2  \,\notag  \label{equ_Tucker2_cost_n}
\ee
where $\bU_n = [\tX_n]_{(1,2)}$ is of size $R_{n-1}I_n \times R_{n}$ and $\bV_n = [\tX_{n+2}]_{(1)}$ is of size $R_{n+1} \times  I_{n+2}R_{n+2}$.
This problem is solved using the Tucker-2 algorithm in Section~\ref{sec:tt_order3} in a few inner-iterations. When the tensor size $R_{n-1} I_n$ or $R_{n+2} I_{n+2}$ is significantly larger than the rank $R_{n}$ or $R_{n+1}$, the tensor $\tZ_n$ can be compressed to the size of $R_{n} \times I_{n+1} \times R_{n+1}$ prior to the Tucker-2 decomposition.

In the left-to-right update procedure, we need to orthogonalize $\tX_{n+1}$ and $\tX_{n+2}$ but not the core tensor $\tX_{n}$, because $\bU_n$ is orthogonal.
From the closed-form of $\tX_{n+1} = \bU_n^T \bullet \tZ_n \bullet \bV_n^T$, we have 
\be
\tX_{n+1}  \ltimesx_2 \tX_{n+1}  %&=& \bV_n^T [\tZ_n \times_1 \bU_n^T]_{(3)} \, [\tZ_n \times_1 \bU_n^T]_{(3)}^T \bV_n  \notag  \\
%&=& \bV_n \, \bF_n \, \bV_n^ \notag\\
&=&  \diag(\blambda_n)	\notag 
\ee 
where $\blambda_n$ are the $R_{n+1}$ largest eigenvalues of $ (\bU_n^T \bullet \tZ)  \, \ltimesx_2   \,  (\bU_n^T \bullet \tZ)$. This indicates that the left-orthogonalisation to $\tX_{n+1}$ simply scales frontal slices $\tX_{n+1}(:,:,r)$ by a factor of $1/\sqrt{\blambda_n(r)}$ where $r = 1, \ldots, R_{n+1}$.

%This alternating triple-cores update algorithm exploits the Tucker-2 decomposition, and requires lower computational cost to compute contracted tensors $\tT_{n:m}$ compared to the single or two core updates algorithms.

\bibliographystyle{IEEEbib}
\bibliography{bibligraphy_thesis,BIBTENSORS2016}

\begin{thebibliography}{10}

\bibitem{Harshman}
R.A. Harshman,
\newblock ``Foundations of the {PARAFAC} procedure: Models and conditions for
  an explanatory multimodal factor analysis,''
\newblock {\em UCLA Working Papers in Phonetics}, vol. 16, pp. 1--84, 1970.

\bibitem{Carroll_Chang}
J.D. Carroll and J.J. Chang,
\newblock ``Analysis of individual differences in multidimensional scaling via
  an $n$-way generalization of {E}ckart--{Y}oung decomposition,''
\newblock {\em Psychometrika}, vol. 35, no. 3, pp. 283--319, 1970.

\bibitem{Tucker66}
L.R. Tucker,
\newblock ``Some mathematical notes on three-mode factor analysis,''
\newblock {\em Psychometrika}, vol. 31, pp. 279--311, 1966.

\bibitem{Lathauwer_HOOI}
L.~{De Lathauwer}, B.~De Moor, and J.~Vandewalle,
\newblock ``{On the best rank-1 and rank-(R1,R2,\ldots,RN) approximation of
  higher-order tensors},''
\newblock {\em SIAM Journal of Matrix Analysis and Applications}, vol. 21, no.
  4, pp. 1324--1342, 2000.

\bibitem{CEM:CEM1206}
R.~Bro, R.~A. Harshman, N.~D. Sidiropoulos, and M.~E. Lundy,
\newblock ``Modeling multi-way data with linearly dependent loadings,''
\newblock {\em Journal of Chemometrics}, vol. 23, no. 7-8, pp. 324--340, 2009.

\bibitem{Favier-deAlmeida14}
G.~Favier and A.~de~Almeida,
\newblock ``Overview of constrained {PARAFAC} models,''
\newblock {\em EURASIP Journal on Advances in Signal Processing}, vol. 2014,
  no. 1, pp. 1--25, 2014.

\bibitem{Lath-BCM}
L.~{De Lathauwer},
\newblock ``Decompositions of a higher-order tensor in block terms -- {P}art
  {I}: {L}emmas for partitioned matrices,''
\newblock {\em SIAM Journal of Matrix Analysis and Applications}, vol. 30, no.
  3, pp. 1022--1032, 2008,
\newblock Special Issue {on Tensor Decompositions and Applications}.

\bibitem{Phan_tensordeflation_alg}
A.-H. Phan, P.~Tichavsk{\'y}, and A.~Cichocki,
\newblock ``Tensor deflation for {CANDECOMP/PARAFAC}. {Part 1}: {Alternating
  Subspace Update Algorithm},''
\newblock {\em IEEE Transactions on Signal Processing}, vol. 63, no. 12, pp.
  5924--5938, 2015.

\bibitem{Phan_ranksplitting_full}
A.-H. Phan, P.~Tichavsk{\'y}, and A.~Cichocki,
\newblock ``Tensor deflation for {CANDECOMP/PARAFAC}. {Part} 3: {Rank
  splitting},''
\newblock {\em ArXiv e-prints}, 2015.

\bibitem{Kolda08}
T.G. Kolda and B.W. Bader,
\newblock ``Tensor decompositions and applications,''
\newblock {\em SIAM Review}, vol. 51, no. 3, pp. 455--500, September 2009.

\bibitem{NMF-book}
A.~Cichocki, R.~Zdunek, A.-H. Phan, and S.~Amari,
\newblock {\em Nonnegative Matrix and Tensor Factorizations: Applications to
  Exploratory Multi-way Data Analysis and Blind Source Separation},
\newblock Wiley, Chichester, 2009.

\bibitem{Cich-Lathp}
A.~Cichocki, D.~Mandic, C.~Caiafa, A-H. Phan, G.~Zhou, Q.~Zhao, and L.~De
  Lathauwer,
\newblock ``Tensor decompositions for signal processing applications: {F}rom
  two-way to multiway component analysis,''
\newblock {\em Signal Processing Magazine, IEEE}, vol. 32, no. 2, pp. 145--163,
  2015.

\bibitem{OseledetsTT09}
I.V. Oseledets and E.E. Tyrtyshnikov,
\newblock ``Breaking the curse of dimensionality, or how to use {SVD} in many
  dimensions,''
\newblock {\em SIAM J. Scientific Computing}, vol. 31, no. 5, pp. 3744--3759,
  2009.

\bibitem{Klumper91}
A.~Klumper, A.~Schadschneider, and J.~Zittartz,
\newblock ``Equivalence and solution of anisotropic spin-1 models and
  generalized t-j fermion models in one dimension,''
\newblock {\em Journal of Physics A: Mathematical and General}, vol. 24, no.
  16, pp. L955, 1991.

\bibitem{Vidal03}
G.~Vidal,
\newblock ``Efficient classical simulation of slightly entangled quantum
  computations,''
\newblock {\em Physical Review Letters}, vol. 91, no. 14, pp. 147902, 2003.

\bibitem{Holtz-TT-2012}
S.~Holtz, T.~Rohwedder, and R.~Schneider,
\newblock ``The alternating linear scheme for tensor optimization in the tensor
  train format,''
\newblock {\em SIAM J. Scientific Computing}, vol. 34, no. 2, 2012.

\bibitem{KressnerEIG2014}
D.~Kressner, M.~Steinlechner, and A.~Uschmajew,
\newblock ``Low-rank tensor methods with subspace correction for symmetric
  eigenvalue problems,''
\newblock {\em SIAM Journal on Scientific Computing}, vol. 36, no. 5, pp.
  A2346--A2368, 2014.

\bibitem{DaSilva2015131}
C.~Da Silva and F.~J. Herrmann,
\newblock ``Optimization on the hierarchical {Tucker} manifold Ð applications
  to tensor completion,''
\newblock {\em Linear Algebra and its Applications}, vol. 481, pp. 131 -- 173,
  2015.

\bibitem{Kressner2014}
D.~Kressner, M.~Steinlechner, and B.~Vandereycken,
\newblock ``Low-rank tensor completion by {Riemannian} optimization,''
\newblock {\em BIT Numerical Mathematics}, vol. 54, no. 2, pp. 447--468, 2014.

\bibitem{Rauhut2015}
Holger Rauhut, Reinhold Schneider, and {\v{Z}}eljka Stojanac,
\newblock {\em Tensor Completion in Hierarchical Tensor Representations}, pp.
  419--450,
\newblock Springer International Publishing, Cham, 2015.

\bibitem{journals/siamsc/GrasedyckKK15}
Lars Grasedyck, Melanie Kluge, and Sebastian KrŠmer,
\newblock ``Variants of alternating least squares tensor completion in the
  tensor train format.,''
\newblock {\em SIAM J. Scientific Computing}, vol. 37, no. 5, 2015.

\bibitem{OseledetsTT11}
I.V. Oseledets,
\newblock ``Tensor-train decomposition,''
\newblock {\em SIAM J. Scientific Computing}, vol. 33, no. 5, pp. 2295--2317,
  2011.

\bibitem{Phan2012-Kron}
A~H Phan, A~Cichocki, P~Tichavsky, D~Mandic, and K~Matsuoka,
\newblock ``On revealing replicating structures in multiway data: {A} novel
  tensor decomposition approach,''
\newblock in {\em Proc. 10th International Conf. LVA/ICA, Tel Aviv, March
  12-15}, pp. 297--305. Springer, 2012.

\bibitem{Kressner2014MC}
D.~Kressner and F.~Macedo,
\newblock ``Low-rank tensor methods for communicating {Markov} processes,''
\newblock in {\em Quantitative Evaluation of Systems}, pp. 25--40. Springer,
  2014.

\bibitem{White1993}
S.R. White,
\newblock ``Density-matrix algorithms for quantum renormalization groups,''
\newblock {\em Physical Review B}, vol. 48, no. 14, pp. 10345, 1993.

\bibitem{2013arXiv1301.6068D}
S.~V. {Dolgov} and D.~V. {Savostyanov},
\newblock ``{Alternating minimal energy methods for linear systems in higher
  dimensions. Part I: SPD systems},''
\newblock {\em ArXiv e-prints}, Jan. 2013.

\bibitem{oseledets2014tt}
I.V. Oseledets, S.V. Dolgov, V.A. Kazeev, D.~Savostyanov, O.~Lebedeva,
  P.~Zhlobich, T.~Mach, and L.~Song,
\newblock ``{TT-Toolbox},'' 2014,
\newblock https://github.com/oseledets/TT-Toolbox.

\bibitem{DBLP:journals/pami/ZhaoZC15}
Q.~Zhao, L.~Zhang, and A.~Cichocki,
\newblock ``Bayesian {CP} factorization of incomplete tensors with automatic
  rank determination,''
\newblock {\em {IEEE} Trans. Pattern Anal. Mach. Intell.}, vol. 37, no. 9, pp.
  1751--1763, 2015.

\bibitem{Aharon2006}
M.~Aharon, M.~Elad, and A.~Bruckstein,
\newblock ``{$K$-SVD: A}n algorithm for designing overcomplete dictionaries for
  sparse representation,''
\newblock {\em IEEE Transactions on Signal Processing}, vol. 54, no. 11, pp.
  4311--3322, 2006.

\end{thebibliography}

\end{document}